\crefname{hypothesis}{Hypothesis}{Hypotheses}
\title{Shape optimization for superconductors governed by $\mathbf{H}(\operatorname{\mathbf{curl}})$-elliptic variational inequalities \thanks{Submitted to the editors DATE
\funding{A. Laurain acknowledges the support of FAPESP, process: 2016/24776-6, and of the Brazilian National Council for Scientific and Technological Development  (Conselho Nacional de Desenvolvimento Cient\'ifico e Tecnol\'ogico - CNPq), through the program  ``Bolsa de Produtividade em Pesquisa - PQ 2015'', process: 302493/2015-8.
The work of M. Winckler and I. Yousept was supported by the German Research Foundation Priority Program DFG SPP 1962 "Non-smooth and Complementarity-based Distributed Parameter Systems: Simulation and Hierarchical Optimization", Project YO 159/2-2.}}}
\author{A. Laurain\thanks{Departamento de Matem\'{a}tica Aplicada, Instituto de Matem\'{a}tica e Estat\'{i}stica, Universidade de
\~{S}ao Paulo, Rua do Mat\~{a}o, 1010, CEP 05508-090, \~{S}ao Paulo, SP, Brazil, {\email{laurain@ime.usp.br}}.}
\and M. Winckler\thanks{University of Duisburg-Essen, Fakult{\"a}t f{\"u}r Mathematik, Thea-Leymann-Str. 9, D-45127 Essen, Germany, {\email{malte.winckler@uni-due.de}}, {\email{irwin.yousept@uni-due.de}}.}
\and I. Yousept\footnotemark[3] }
\newenvironment{reduce}
 {\hbox\bgroup\scriptsize$\displaystyle}
 {$\egroup}
\crefname{assumption}{Assumption}{Assumptions}
\DeclareMathOperator{\curl}{\mathbf{curl}}
\DeclareMathOperator{\Div}{div}
\newcommand{\boeta}{\bm{\eta}}
\newcommand{\bzeta}{\bm{\zeta}}
\newcommand{\E}{{\bm E}}   
\newcommand{\e}{{\bm e}}
\renewcommand{\H}{{\bm H}}
\newcommand{\n}{{\bm n}}
\newcommand{\f}{{\bm f}}
\renewcommand{\P}{{\bm P}}
\newcommand{\lam}{{\bm \lambda}}
\newcommand{\Eg}{{\E^\gamma}}
\newcommand{\lamg}{{\lam^\gamma}}
\newcommand{\Pg}{{\P^\gamma}}
\newcommand{\Egn}{{\E^\gamma_n}}
\newcommand{\lamgn}{{\lam^\gamma_n}}
\newcommand{\Lam}{\bm{\Lambda}}
\newcommand{\Lamg}{\bm{\Lambda}_\gamma}
\newcommand{\bxig}{\bm{\psi}^\gamma}
\newcommand{\Egt}{\E^\gamma_t}
\newcommand{\Egz}{\E^\gamma_0}
\newcommand{\Pgt}{\P^\gamma_t}
\newcommand{\Pgz}{\P^\gamma_0}
\newcommand{\Egb}{{\E^\gamma_\star}}
\newcommand{\lamgb}{\lam^\gamma_\star}
\newcommand{\vb}{{\bm v}}
\newcommand{\w}{{\bm w}}
\newcommand{\R}{\mathbb{R}}
\newcommand{\N}{\mathbb{N}}
\newcommand{\V}{{\bm V}}
\newcommand{\Cinf}{\bm{\mathcal C}^\infty_0(\Omega)}
\renewcommand{\L}{{{\bm L}^2(\Omega)}}
\newcommand{\ConematB}{{\mathcal{C}}^1(B, \R^{3\times3})}
\newcommand{\Hcurl}{{\bm H}(\curl)}
\newcommand{\Hzcurl}{{\bm H}_0(\curl)}
\newcommand{\norm}[2]{\|#1\|_{#2}}
\newcommand{\Lnorm}[1]{\norm{#1}{\L}}
\newcommand{\LnormB}[1]{\norm{#1}{{\bm L}^2(B)}}
\newcommand{\Hcurlnorm}[1]{\norm{#1}{\Hcurl}}
\newcommand{\ConematnormB}[1]{\norm{#1}{\mathcal{C}^1(B,\R^{3\times3})}}
\newcommand{\Nedelec}{N{\'e}d{\'e}lec{}}
\newcommand{\Gateaux}{G{\^a}teaux}
\newcommand{\om}{\omega}
\newcommand{\oms}{\om_\star}
\newcommand{\omt}{{{\oms}}}
\newcommand{\omng}{{\om^\gamma_n}}
\newcommand{\omgb}{{\oms^\gamma}}
\newcommand{\T}{\bm{T}_t}
\newcommand{\transp}{\mathsf{T}}
\newcommand{\ML}{\mathcal{L}}
\newcommand{\VV}{\boldsymbol{\theta}}
\newcommand{\M}{\mathds{M}}
\newcommand{\divv}{\operatorname{div}}
\newcommand{\bS}{\boldsymbol{S}}
\newcommand{\bI}{\boldsymbol{I}}
\renewcommand{\O}{\Omega}
\newcommand{\maxg}{{\max}_\gamma}
\newcommand{\ME}{\mathcal{E}}
\newcommand{\One}{\mathds{1}}
\newcommand{\DTME}{\mathcal{K}}
\begin{document}

\maketitle
\begin{abstract}   
This paper is devoted to the theoretical and numerical study of an optimal design problem  in high-temperature superconductivity (HTS).  The shape optimization problem is to find an optimal superconductor shape  which minimizes a certain cost functional under a given   target on the electric field over a specific domain of interest. For the governing PDE-model, we consider an elliptic curl-curl variational inequality (VI) of the second kind with an L1-type nonlinearity.  In particular, the non-smooth VI character and the involved H(curl)-structure make the corresponding shape sensitivity analysis     challenging. To tackle the non-smoothness,  a penalized dual VI formulation    is proposed, leading to the G{\^a}teaux differentiability of the  corresponding dual variable mapping. This property allows us to  derive the distributed shape derivative of the cost functional through rigorous shape calculus   on the basis of   the averaged adjoint method. The developed shape derivative turns out to be uniformly stable with respect to the penalization parameter, and  strong convergence of the penalized problem is guaranteed.  Based on  the achieved theoretical findings, we propose   3D numerical solutions, realised using a level set  algorithm and a Newton method with the   \Nedelec \, edge element discretization. Numerical results indicate a favourable and efficient performance  of the proposed approach for  a specific HTS application in superconducting shielding.  \end{abstract}

\begin{keywords}  shape optimization, high-temperature superconductivity, Maxwell variational inequality, Bean's critical-state model, superconducting shielding, level set method. 
\end{keywords}

\begin{AMS}
35Q93, 35Q60,  49Q10.   
\end{AMS}

\section{Introduction} 
The physical phenomenon of superconductivity is characterized by the zero electrical resistance and the expulsion of   magnetic fields (Meissner effect) occurring up to a certain level of the operating  temperature and magnetic field strength. Nowadays, numerous key technologies  can be realised through high-temperature superconductivity (HTS), including magnetic resonance imaging, magnetic levitation, powerful superconducting wires, particle accelerators, 
magnetic energy storage and many more.  In particular,  to improve and optimize their efficiency and reliability, advanced shape optimization (design) methods are   highly desirable. 

For instance, efficiently designed superconducting shields are a practical way to protect certain areas from magnetic fields. Basically, there are only two possible ways for a magnetic field to penetrate an area shielded by a superconductor -- through the material itself and through opened parts such as holes or gaps. The former depends solely on the properties of the material, the operating temperature, and the magnetic field strength, whereas the latter is also highly affected by the geometry. In the case of an HTS coil for instance, physical experiments \cite{KvitkovicEtAl15} show that the enclosed area is still shielded even if the opened ends are directly facing the field lines. 
On the other hand, if the diameter gets too large, field lines start penetrating the inside. Thus, the following question arises: how should we design superconducting shields in order to save material and still keep the electromagnetic field penetration to a minimum?

 In the recent past,  the Bean critical-state model for HTS has been  extensively studied by several authors. In the eddy current case, it leads to a parabolic Maxwell variational inequality (VI) of the first kind (see \cite{bos94,pri96}), while in the full Maxwell case it gives rise to a hyperbolic Maxwell VI of the second kind (see \cite{Yousept17Hyp,Yousept19Hyp}). For both parabolic and hyperbolic Maxwell VIs,  efficient finite element  methods have been proposed and analyzed  in \cite{ellkas07,barpri15,WincklerYousept18}.

This paper focuses on the sensitivity analysis and numerical investigation for a shape optimization problem in HTS. Our task is to find an admissible superconductor shape   which minimizes a tracking-type objective functional under a given target on the electric field over a specific domain of interest. For the governing PDE-model, we consider the elliptic (time-discrete) counterpart to the Bean critical-state model governed by  Maxwell's equations \cite{WincklerYousept18,Yousept17Hyp,Yousept19Hyp}, given by   an elliptic $\operatorname {\mathbf{curl}}$-$\operatorname {\mathbf{curl}}$ VI of the second kind. To be more precise, let $\Omega \subset \R^3$ be a bounded Lipschitz domain and   
\begin{equation*}
\mathcal{O} := \{\om \subset B  \  \colon \  \om \textrm{ is open, Lipschitz, with uniform Lipschitz constant } L\},
\end{equation*}
with some subset   $B \subset \Omega$. 
For every   admissible superconductor shape $\om\in\mathcal O$, let $\E = \E(\om)\in\Hzcurl$ denote the associated electric field given as the solution of 
\begin{align}\tag{VI\textsubscript{$\om$}}\label{eq:VI}
a(\E,\vb - \E) + \varphi_\om(\vb) -\varphi_\om(\E) \geq \int_\Omega \f \cdot (\vb - \E) \, dx \quad \forall \vb \in \Hzcurl ,
\end{align} 
with the elliptic $\curl$-$\curl$ bilinear form $a\colon \Hzcurl \times \Hzcurl \to \R$ defined by
\begin{align*}
a(\vb,\w) \coloneqq \int_{\Omega}  \nu \curl \vb \cdot \curl \w \, dx + \int_\Omega \varepsilon   \vb    \cdot\w \,dx,
\end{align*}
and the non-smooth $L^1$-type functional
$\varphi_\om \colon  {\bm L}^1(\Omega)  \to \R$, $\vb\mapsto j_c\displaystyle\int_\omega |\vb(x)| \, dx.$
Here, $j_c>0$  denotes the   critical current density of the superconductor $\omega$, and $\epsilon, \nu\colon \Omega \to \R^{3\times3}$ are   the electric permittivity and the magnetic reluctivity, respectively. The right-hand side $\f\colon\Omega \to \R^3$ stands for the applied current source.  Altogether, the optimal HTS design problem  we focus on   reads  as follows:
 \begin{align}\tag{P} \label{eq:P}
\min_{\omega \in \mathcal O} J(\om) :=\frac{1}{2}\int_{B}    \kappa  |\E(\omega)  - \E_d |^2 \, dx  + \int_{\omega}  \, dx,
\end{align}
for some given target $\E_d : B \to \R^3$ and weight coefficient $\kappa: B \to (0,\infty)$. The precise mathematical assumptions for all data involved in \cref{eq:P} are specified  in \cref{assump:MaterialAndData}.

To the best of   authors' knowledge, this paper is the first theoretical and numerical study of  the shape optimization  subject to $\Hcurl$-elliptic VI of the second kind. Both the involved $\Hcurl$-structure and the non-smooth VI character make the corresponding analysis truly challenging. We refer to \cite{you12a,you13,troyou12} for the optimal control of static Maxwell equations. Quite recently, the optimal control of hyperbolic Maxwell variational inequalities arising in HTS was investigated in   \cite{Yousept17OptContr}. While \cref{eq:P} admits an optimal solution (\cref{thm:ExistenceP}), the   differentiability of the   dual variable mapping associated with \cref{eq:VI} cannot be guaranteed. This property is however indispensable for our shape sensitivity analysis.  Therefore, we propose to approximate \cref{eq:P} by replacing \cref{eq:VI}  through its   penalized dual  formulation  \cref{eq:RegVI}, for which the corresponding  dual variable mapping is  G{\^a}teaux-differentiable (\cref{lemma:PropertiesReg}). This allows us to prove our main theoretical result (\cref{thm:sturm}) on the distributed shape derivative of the cost functional through rigorous shape calculus on the basis of the averaged adjoint method. Importantly, the established shape derivative is uniformly stable with respect to the penalization parameter (\cref{thm:ConvergenceShapeDerivative}), and strong convergence of the penalized approach can be guaranteed (\cref{thm:ConvergenceEgLamg}). In addition, the Newton method 
is applicable to the   penalized dual  formulation  \cref{eq:RegVI}. 
Thus, efficient numerical optimal shapes can be realized by means of a level set algorithm along with the developed  shape derivative and a symmetrization strategy.
All these theoretical and numerical evidences indicate  the favourable performance of our approach to deal with shape optimization problems subject to a VI of the second kind. 

Theoretical results on  optimal design problems  were obtained in \cite{BarbuFriedman1991,MR1728769,MR1606879, MR2523581,MR1106360,NeitSokoZole1988,MR1215733,MR3562369}, but there are few early references for VI-constrained numerical shape optimization (see \cite{MR2356899,MR1294835,MR1816853,MR2206676}).
Recent publications include \cite{HintLaur11} regarding a solution algorithm in the infinite dimensional setting for shape optimization problems governed by VIs of the first kind and \cite{MR3878790} concerning a shape optimization method based on a regularized variant of  VI of the first kind.

The concept of shape derivative \cite{DelfourZolesio11,MR3791463,MR1215733} is the basis for the sensitivity analysis of shape functionals. 
We use the \emph{averaged adjoint method}  introduced in \cite{MR3374631}, a Lagrangian-type method  for the efficient computation of shape derivatives.
Lagrangian methods are commonly used in shape optimization and have the advantage of providing the shape derivative without the
need to compute the material derivative of the state (see \cite{MR2033390,MR862783,DelfourZolesio11,MR3350625,MR2166150,MR2434064,MR2977497}). 
Compared to these approaches, the averaged adjoint method is fairly general due to minimal required conditions.

%

\section{Preliminaries}\label{section:Prelim}
For a given Banach space $V$, we denote its norm by $\norm{\cdot}{V}$. If $V$ is a Hilbert space, then $(\cdot,\cdot)_V$ stands for its scalar product and $\|\cdot\|_V$ for the induced norm. In the case of $V = \R^n$, we renounce the subscript in the (Euclidean) norm and write $|\cdot|$. The Euclidean scalar product is denoted by a dot, and $\otimes$ is the standard outer product for vectors in $\R^3$. Hereinafter, a bold typeset indicates vector-valued functions and their respective spaces. 
The Banach space ${\mathcal{C}}^1(\Omega, \R^{3\times 3})$ is equipped with the standard norm, and for $\bm{\mathcal{C}}^{0,1}(\Omega)\coloneqq {\mathcal{C}}^{0,1}(\Omega, \R^3)$ we use
\begin{equation*}
 \norm{\VV}{\bm{\mathcal{C}}^{0,1}(\Omega)} =   \sup_{x\in\Omega} |\VV(x)| + \sup_{x\neq y\in\Omega} \frac{|\VV(x) - \VV(y)|}{|x-y|}.
\end{equation*}
Now, we introduce the central Hilbert space used throughout this paper: 
\begin{align*}
\Hcurl \coloneqq \{\vb\in\L\;:\; \curl\vb\in\L\},
\end{align*}
where $\curl$ is understood in the distributional sense. As usual, $\Cinf$ denotes the space of all infinitely differentiable functions with compact support in $\Omega$. The space $\Hzcurl$ stands for the closure of $\Cinf$ with respect to the $\Hcurl$-norm. 

Next, we present all the necessary assumptions for the material parameters and the given data in \cref{eq:P} and \cref{eq:VI}:
\begin{assumption}[Material parameters and given data]
\label{assump:MaterialAndData} 
\leavevmode
\begin{enumerate}[label = ({A}\arabic*)]
\item \label{assump:P} The subset $B\subset \Omega$ is a Lipschitz domain, $\E_d \in \bm{\mathcal{C}}^{1}(B)$, and $\kappa \in \mathcal{C}^{1}(B)$.
\item \label{assump:Material} We assume $j_c\in \R^+$, and the material parameters  $\epsilon, \nu \colon \Omega \to \R^{3\times3}$ are assumed to be $L^\infty(\Omega, \R^{3\times3})\cap \ConematB$, symmetric   and uniformly positive definite, i.e., there exist $\underline{\nu}, \underline{\epsilon}>0$ such that 
\begin{equation}\label{psd}
  \xi^\transp\nu(x)\xi\geq \underline{\nu}|\xi|^2 
\ \text{and}\    \xi^\transp\epsilon(x)\xi\geq \underline{\epsilon}|\xi|^2
\quad \text{for a.e.}\, \, x\in\O \text{ and all }\xi\in \R^3.
\end{equation}
\item \label{assump:f} The right-hand side satisfies  $\f \in \L\cap\bm{\mathcal{C}}^{1}(B)$. 

\end{enumerate} 
\end{assumption}
\begin{remark}
\leavevmode
\begin{itemize}
 \item[(i)] As pointed out earlier, in the context of superconducting shields, one looks for an optimal superconductor shape $\omega$ that minimizes both the electromagnetic field penetration   and the volume of material.  This can be realised by solving   \cref{eq:P} with $\E_d = 0$ which    obviously satisfies \ref{assump:P}. 
 \item[(ii)]  The material assumption \ref{assump:Material} holds true for instance  in the case of homogeneous HTS material. In this case, $\epsilon,\mu$ are constant in $B$. 
 \item[(iii)] A   choice for the $\f$ satisfying \ref{assump:f} is given by an induction coil away from the superconducting region $B$. In this case,   $\f \equiv 0$ in $B$. 
 \end{itemize}
\end{remark}
For every fixed $\om \subset \mathcal{O}$ the existence of a unique solution $\E\in\Hzcurl$ of \cref{eq:VI} is covered by the classical result \cite[Theorem 2.2]{LionsStampacchia67}, since \ref{assump:Material} implies that the bilinear form $a: \Hzcurl \times \Hzcurl \to \R$ is coercive and continuous. Additionally, it is well-known (cf. \cite{GlowinskiLionsTremolieres81}) that there exists a unique $\lam \in {\bm L}^\infty(\om)$ such that
\begin{equation}\label{eq:LagrangeVI}
\left\{\begin{aligned}
&a(\E,\vb) + \int_\om \lam \cdot \vb \, dx = \int_\O \f\cdot\vb \, dx \quad \forall \vb\in\Hzcurl ,  \\
& |\lam(x)| \leq j_c, \quad \lam(x) \cdot \E(x) = j_c |\E(x)| \text{ for a.e. } x\in \om.
\end{aligned}\right.
\end{equation}
Throughout this paper the following compactness result for the set of domains $\mathcal{O}$ is pivotal to our analysis \cite[Theorem 2.4.10]{MR3791463}. 
\begin{theorem}\label{thm:lipCV}
Let \cref{assump:MaterialAndData} hold and $\{\om_n\}_{n\in\N}\subset\mathcal{O}$. Then, there exist $\om\in\mathcal{O}$ and a subsequence $\{\om_{n_k}\}_{k\in\N}$ which converges to $\om$ in the sense of Hausdorff, and in the sense of characteristic functions. 
Moreover, $\overline{\om}_{n_k}$ and $\partial\om_{n_k}$ converge in the sense of Hausdorff towards $\overline{\om}$ and $\partial\om$, respectively. 
\end{theorem}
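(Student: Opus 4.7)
The plan is to follow the classical strategy for compactness of uniformly Lipschitz domains, as carried out for instance in Henrot-Pierre's monograph. The uniform Lipschitz constant $L$ is equivalent (up to fixed geometric parameters) to a uniform cone property: there exist angles $\theta$ and heights $h$, depending only on $L$, such that at every boundary point each $\omega_n$ admits a cone of parameters $(\theta,h)$ inside $\omega_n$ and another outside. This uniform cone property is the workhorse behind every step below.

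First I would extract a Hausdorff-convergent subsequence. Since $\overline{\omega_n}\subset \overline{B}$ and $\overline{B}$ is compact, the Blaschke selection theorem yields a subsequence $\overline{\omega_{n_k}}$ converging, in Hausdorff distance, to some compact set $K\subset\overline{B}$. A parallel argument applied to the complements $\overline{B}\setminus\omega_n$ (which also inherit a uniform cone property) produces, after a further extraction, a Hausdorff limit of the complements; intersecting this with $\overline{B}$ and taking interiors identifies the candidate $\omega:=\mathrm{int}(K)$.

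The main obstacle is stability of the Lipschitz regularity under Hausdorff convergence: one must show that $\omega$ is an open Lipschitz set with the same uniform constant $L$, rather than a degenerate limit with cusps or collapsed boundary. This is where the uniform cone property is indispensable. Passing to the Hausdorff limit, one verifies that for every $x\in\partial\omega$ the interior and exterior cones of parameters $(\theta,h)$ persist, which both guarantees that $\omega\in\mathcal{O}$ and that $\partial\omega_{n_k}\to\partial\omega$ and $\overline{\omega_{n_k}}\to\overline{\omega}$ in Hausdorff distance (the latter two convergences can fail in the absence of a uniform regularity assumption, e.g.\ a shrinking outward cusp could keep $\overline{\omega_n}$ Hausdorff-close to a set while $\partial\omega_n$ drifts away).

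Finally, convergence in the sense of characteristic functions follows from a theorem of Chenais: on the class of open sets satisfying a fixed uniform cone property, Hausdorff convergence of the closures is equivalent to $L^1$-convergence of the characteristic functions. Concretely, the uniform cone property provides, uniformly in $n$, a lower bound on the Lebesgue measure of the symmetric difference near the boundary in terms of the Hausdorff distance, ruling out the usual counter-examples (thin spikes, dense Cantor-like perturbations) and giving $\mathbf{1}_{\omega_{n_k}}\to\mathbf{1}_\omega$ in $L^1(\Omega)$, hence pointwise a.e.\ along a further subsequence. Combining these ingredients yields all four convergences claimed in the statement.
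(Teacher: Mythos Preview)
The paper does not actually give its own proof of this theorem: it simply cites \cite[Theorem 2.4.10]{MR3791463} (the Henrot--Pierre monograph) and uses the result as a black box. Your proposal sketches precisely the classical argument from that reference --- Blaschke selection, stability of the uniform cone/Lipschitz property under Hausdorff limits, and Chenais' equivalence of Hausdorff and characteristic-function convergence on uniformly Lipschitz classes --- so your approach coincides with the one the paper defers to.
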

With \cref{thm:lipCV} at hand, it is possible to prove existence of an optimal shape for \cref{eq:P} directly. However, as the same result is obtained as a byproduct of \cref{thm:ConvergenceEgLamg}, we do not give a proof at this point.
\begin{theorem}\label{thm:ExistenceP}
 Under \cref{assump:MaterialAndData} the shape optimization problem \cref{eq:P} has an optimal solution $\oms\in\mathcal{O}$.
\end{theorem}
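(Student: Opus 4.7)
The strategy is the direct method: take a minimizing sequence $\{\omn\}\subset\mathcal{O}$ for \cref{eq:P}, extract a shape limit via \cref{thm:lipCV}, and pass to the limit in \cref{eq:VI}. Testing \cref{eq:VI} with $\vb=0$ and using the coercivity of $a$ provided by \ref{assump:Material} yields $\|\E(\omn)\|_{\Hcurl}\leq C$ uniformly in $n$. Via \cref{eq:LagrangeVI} the associated multiplier $\lam_n$, extended by zero outside $\omn$, is uniformly bounded by $j_c$ in ${\bm L}^\infty(\O)$. Combining \cref{thm:lipCV} with Banach--Alaoglu, I can pass to a common subsequence (not relabeled) satisfying $\omn\to\oms\in\mathcal O$ in the Hausdorff and characteristic-function senses, $\E(\omn)\rightharpoonup\E_\star$ in $\Hzcurl$, and $\One_{\omn}\lam_n\rightharpoonup^\ast \bm{\mu}$ in ${\bm L}^\infty(\O)$.

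It remains to identify $\E_\star=\E(\oms)$, which I would do by reconstructing \cref{eq:LagrangeVI} for the pair $(\E_\star,\bm{\mu})$. Passing to the limit in the linear equation of \cref{eq:LagrangeVI} gives
$$a(\E_\star,\vb)+\int_\O\bm{\mu}\cdot\vb\,dx=\int_\O\f\cdot\vb\,dx \qquad \forall \vb\in\Hzcurl.$$
The support condition $\bm{\mu}=0$ on $\O\setminus\overline{\oms}$ follows from the Hausdorff convergence $\overline{\omn}\to\overline{\oms}$: any $\vb\in\Cinf$ whose support avoids $\overline{\oms}$ is eventually supported away from $\overline{\omn}$, so its pairing with $\One_{\omn}\lam_n$ vanishes along the tail. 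The bound $|\bm{\mu}|\leq j_c$ a.e.~in $\oms$ is a standard consequence of weak-$\ast$ lower semicontinuity of the essential supremum.

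I expect the main obstacle to be the complementarity condition $\bm{\mu}\cdot\E_\star=j_c|\E_\star|$ a.e.~in $\oms$. The plan is to sandwich $\int_\O\bm{\mu}\cdot\E_\star\,dx$ between two identical bounds. Testing \cref{eq:LagrangeVI} with $\vb=\E(\omn)$ gives $j_c\int_{\omn}|\E(\omn)|\,dx=\int_\O\f\cdot\E(\omn)\,dx-a(\E(\omn),\E(\omn))$, and weak lower semicontinuity of $\vb\mapsto a(\vb,\vb)$ combined with the weak convergence of $\E(\omn)$ yields
$$\limsup_{n\to\infty} j_c\int_{\omn}|\E(\omn)|\,dx \leq \int_\O\f\cdot\E_\star\,dx-a(\E_\star,\E_\star)=\int_\O\bm{\mu}\cdot\E_\star\,dx,$$
where the last equality is obtained by testing the limiting equation with $\E_\star$. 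For the matching lower bound, the elementary decomposition
$$\int_{\omn}|\E(\omn)|\,dx\geq\int_{\oms}|\E(\omn)|\,dx-\int_{\oms\setminus\omn}|\E(\omn)|\,dx,$$
together with convex (hence weak) lower semicontinuity of $\vb\mapsto\int_{\oms}|\vb|\,dx$ on the \emph{fixed} set $\oms$ and the bound $\int_{\oms\setminus\omn}|\E(\omn)|\,dx\leq |\oms\setminus\omn|^{1/2}\|\E(\omn)\|_\L\to 0$ (using the $L^1$-convergence of characteristic functions), produces $\liminf_n j_c\int_{\omn}|\E(\omn)|\,dx\geq j_c\int_{\oms}|\E_\star|\,dx$. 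Since $\int_\O\bm{\mu}\cdot\E_\star\,dx\leq j_c\int_{\oms}|\E_\star|\,dx$ by $|\bm{\mu}|\leq j_c$ and the support condition, the sandwich forces equality throughout, which yields both the desired pointwise complementarity and the saturation $|\bm{\mu}|=j_c$ where $\E_\star\neq 0$. Hence $(\E_\star,\bm{\mu}|_{\oms})$ satisfies \cref{eq:LagrangeVI} for $\oms$, so $\E_\star=\E(\oms)$.

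Finally, $J$ is lower semicontinuous along the extracted subsequence: the tracking term $\tfrac{1}{2}\int_B\kappa|\E-\E_d|^2\,dx$ is convex and continuous on $\L$, hence weakly lower semicontinuous under $\E(\omn)\rightharpoonup\E_\star$, while $|\omn|\to|\oms|$ by the $L^1$-convergence of characteristic functions. Therefore $J(\oms)\leq\liminf_n J(\omn)=\inf_{\mathcal O} J$, proving that $\oms$ is optimal.
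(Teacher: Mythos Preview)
Your argument is correct. The direct method applied to \cref{eq:P} via the dual formulation \cref{eq:LagrangeVI} works, and the sandwich argument for the complementarity relation---combining weak lower semicontinuity of $a(\cdot,\cdot)$ with convexity of $\vb\mapsto\int_{\oms}|\vb|\,dx$ on the \emph{fixed} limit domain---is the right way to close the gap. One small remark: to get $\bm{\mu}=0$ a.e.\ on $\O\setminus\oms$ (not just on $\O\setminus\overline{\oms}$) you implicitly use that $\oms$ is Lipschitz, so $|\partial\oms|=0$; this is harmless but worth stating.

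The paper takes a genuinely different route: it does not prove \cref{thm:ExistenceP} directly but obtains it as a byproduct of the penalization analysis (see \cref{rem:ProofExistenceP}). Existence for each regularized problem \cref{eq:Preg} is shown in \cref{thm:WellPosednessPreg}, where the monotonicity \cref{eq:MonotoneLam} of the smooth map $\Lamg$ yields the Lipschitz estimate $\|\Egb-\Egn\|_{\Hcurl}\leq C\|\chi_{\omgb}-\chi_{\omng}\|_{L^2(\O)}$ and hence \emph{strong} convergence of the states; then \cref{thm:ConvergenceEgLamg} shows that any cluster point of the optimal shapes $\om_\star^{\gamma_n}$ solves \cref{eq:P}. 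Your approach is more self-contained and entirely avoids the penalization machinery, at the cost of the delicate limit passage in the complementarity condition. The paper's approach sidesteps that passage by working with the smooth penalized problem and recycles results that are needed anyway for the stability and convergence analysis of the numerical scheme.
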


\section{Penalized shape optimization approach} \label{section:RegularizedProblem}
As pointed out earlier, our shape sensitivity analysis requires the differentiability of the dual variable mapping $\E \mapsto \lam$ in $\L$, which cannot be guaranteed in general.
To cope with this regularity issue, we approximate \cref{eq:P} by
\begin{equation}\tag{P\textsubscript{$\gamma$}}\label{eq:Preg}
 \min_{\omega \in \mathcal O} J_\gamma(\om) := \frac{1}{2}\int_{B}   \kappa  |\Eg(\omega)  - \E_d |^2  + \int_{\omega}  \, dx,
 \end{equation}
where $\Eg \coloneqq \Eg(\om) \!\in\! \Hzcurl$ is specified by  the  penalized dual formulation of \cref{eq:LagrangeVI}:
\begin{equation}\label{eq:RegVI}
\left\{\begin{aligned}
&a(\Eg,\vb) + \int_\om \lamg \cdot \vb \, dx = \int_\Omega \f\cdot \vb \, dx \quad\forall \vb\in\Hzcurl\\
&\lamg (x) = \frac{j_c\gamma\Eg(x)}{\maxg\{1,\gamma |\Eg(x)|\}} \text{ for a.e. } x\in \omega.
\end{aligned}\right.
\end{equation}
In this context, $\maxg\colon \R^3 \to \R$ denotes the Moreau-Yosida type regularization (cf. \cite{DeLosReyes11})  of the $\max$-function given by 
\begin{equation}\label{eq:Maxc}
 \maxg \{1, x\} \coloneqq \begin{reduce} 
   \left\{\begin{aligned} 
   & x \quad&&\text{ if } x-1 \geq \frac{1}{2\gamma},\\ 
   & 1 + \frac{\gamma}{2}\left(x - 1 + \frac{1}{2\gamma}\right)^2 \quad &&\text{ if } |x - 1| \leq \frac{1}{2\gamma},\\
   & 1 \quad &&\text{ if } x - 1 \leq -\frac{1}{2\gamma}.
  \end{aligned}\right.
 \end{reduce}
\end{equation}
The following lemma summarizes the \Gateaux-differentiability result for the dual variable mapping associated with \cref{eq:RegVI}:
\begin{lemma}[Theorem 4.1 in \cite{DeLosReyes11}]\label{lemma:PropertiesReg}
  Let $\gamma >0$ and \cref{assump:MaterialAndData} hold. Then,
 \begin{equation}\label{eq:Lamg}
\Lamg \colon \L \to \L, \quad   \Lamg(\e) \coloneqq \frac{j_c\gamma\e}{\maxg\{1,\gamma |\e|\}} 
 \end{equation}
is \Gateaux-differentiable  with the \Gateaux-derivative
\begin{multline}\label{eq:GatDiffLamg}
 \Lamg'(\e) \w = \frac{j_c\gamma\w}{\maxg\{1,\gamma |\e|\}}  \\
 - \gamma \left(\mathds{1}_{\mathcal{A}_\gamma(\e)} 
 + \gamma \left( \gamma|\e| - 1 + \frac{1}{2\gamma}\right) 
 \mathds{1}_{\mathcal{S}_\gamma(\e)}
 \right)\frac{(\e\cdot \w)\Lamg(\e)}{\maxg\{1, \gamma|\e|\}|\e|} \quad \forall \e,\w \in\L,
\end{multline}
where $\One_{\mathcal{A}_\gamma(\e)}$ and $\One_{\mathcal{S}_\gamma(\e)}$ stand for the characteristic functions of the disjoint sets $\mathcal{A}_\gamma(\e) = \left\{ x\in \Omega \,:\, \gamma |\e(x)| \geq 1 + 1\slash2\gamma \right\}$ and $\mathcal{S}_\gamma(\e) = \left\{ x\in\Omega \,:\, |\gamma|\e(x)| - 1 | < 1\slash2\gamma \right\}$, respectively. Furthermore, $\Lamg$ is Lipschitz-continuous and monotone, i.e., 
\begin{equation}\label{eq:MonotoneLam}
(\Lamg(\w_1) - \Lamg(\w_2), \w_1 - \w_2)_\L \geq 0 \quad\forall \w_1,\w_2 \in \L.
\end{equation}
\end{lemma}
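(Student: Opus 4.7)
The plan is to establish the result in three stages: first a pointwise analysis of the scalar nonlinearity yielding a $C^1$ map $\R^3 \to \R^3$, then a dominated convergence argument lifting the pointwise derivative to a \Gateaux\ derivative in $\L$, and finally the Lipschitz and monotonicity properties as direct consequences.

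For the pointwise analysis, I would first check that $s \mapsto \maxg\{1,s\}$ is of class $C^1(\R)$ by verifying that values and one-sided derivatives of the three pieces in \cref{eq:Maxc} agree at the joining points $s = 1 \pm 1/(2\gamma)$. Since $\e \mapsto \gamma|\e|$ is $C^1$ on $\R^3 \setminus \{0\}$ while the composition $\e \mapsto \maxg\{1,\gamma|\e|\}$ is constant equal to $1$ on the open neighborhood $\{\gamma|\e|< 1 - 1/(2\gamma)\}$ of the origin (for $\gamma \geq 1/2$; otherwise the origin lies in the semi-active regime and a separate limit argument must be inserted), the composition is $C^1$ on all of $\R^3$. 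Applying the quotient and chain rules and splitting into cases according to whether $x$ lies in the active set $\Ag(\e)$, the semi-active set $\Sg(\e)$, or the inactive set, one recovers exactly the expression announced in \cref{eq:GatDiffLamg}; in particular the resulting pointwise Jacobian of $\Lamg$ is uniformly bounded in operator norm by a constant $C(j_c,\gamma)$.

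For the second stage, fix $\e,\w \in \L$ and $t\neq 0$. The pointwise mean value theorem together with the uniform bound on the Jacobian yields
\begin{align*}
\left|\frac{\Lamg(\e(x) + t\w(x)) - \Lamg(\e(x))}{t}\right| \leq C(j_c,\gamma)\,|\w(x)| \qquad \text{for a.e. } x\in\Omega,
\end{align*}
uniformly in $t$. Since by stage one the difference quotient converges pointwise to $\Lamg'(\e(x))\w(x)$ (with $\Lamg'(\e)$ understood as the pointwise Jacobian acting on $\w$), Lebesgue's dominated convergence theorem transfers this convergence into $\L$, yielding both the \Gateaux\ derivative in $\L$ and the formula. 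The same $L^\infty$ bound on the Jacobian, combined with the fundamental theorem of calculus along segments, gives the global Lipschitz continuity of $\Lamg\colon \L \to \L$.

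Finally, for the monotonicity \cref{eq:MonotoneLam}, the cleanest route is to exhibit $\Lamg$ as the gradient of a convex scalar potential $\Phi_\gamma\colon\R^3\to\R$. On the active set $\Lamg(\e)=j_c\e/|\e| = \nabla(j_c|\e|)$, on the inactive set $\Lamg(\e)=j_c\gamma\e = \nabla(\tfrac{j_c\gamma}{2}|\e|^2)$, and the semi-active formula is likewise the gradient of an explicit radial function of $|\e|$; matching values at the thresholds $\gamma|\e|=1\pm 1/(2\gamma)$ produces a $C^1$ convex $\Phi_\gamma$ with $\nabla\Phi_\gamma=\Lamg$, whence $(\Lamg(\w_1)-\Lamg(\w_2))\cdot(\w_1-\w_2)\geq 0$ pointwise, and integration gives \cref{eq:MonotoneLam}. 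The main technical obstacle is the semi-active regime $\Sg(\e)$: there the quadratic smoothing of $\maxg$ must be carefully inserted into the quotient defining $\Lamg(\e)$, and one has to verify $C^1$-matching, both of $\Lamg$ and of $\Phi_\gamma$, at the two thresholds; once this algebra is confirmed, the remaining steps are routine.
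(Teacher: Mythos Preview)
The paper does not give its own proof of this lemma; it is quoted as Theorem~4.1 of \cite{DeLosReyes11} and used as a black box. Your three-stage argument is correct and is the natural route: pointwise $C^1$-regularity of the superposition operator, a dominated-convergence passage to $\L$, and monotonicity via a convex radial potential. An alternative to the last step, closer in spirit to what the paper itself exploits later around \cref{eq:deM3pos}, is to verify directly that the pointwise Jacobian $\bxig(\e)$ is positive semidefinite; your potential-based argument is equally valid and arguably cleaner. The caveat you flag for $\gamma<1/2$ is genuine but harmless: in that regime the denominator $\maxg\{1,\gamma|\e|\}$ is indeed not differentiable at the origin, yet the full quotient $\Lamg(\e)=j_c\gamma\e/\maxg\{1,\gamma|\e|\}$ remains differentiable there because the singular $\e/|\e|$ factor produced by the chain rule is multiplied by an additional $\e$ and hence contributes only an $O(|\e|^2)$ remainder.
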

In addition to \cref{lemma:PropertiesReg}, it is easy to see that the following estimate holds by definition of $\mathcal{S}_\gamma(\e)$ for every $\e\in\L$:
\begin{equation}\label{eq:SgammaEst}
 \gamma \left( \gamma |\e| -1 + \frac{1}{2\gamma}\right) \leq 1 \quad \text{ a.e. in } \mathcal{S}_\gamma(\e).
\end{equation}
For convenience we define the matrix-valued function $\bxig: \L \to {L}^2(\Omega,\R^{3\times 3})$ by
\begin{align}\label{eq:DefiBxig}
 \bxig(\e) \coloneqq \frac{j_c\gamma \bm{I}_3}{\maxg\{1,\gamma|\e|\}}  - \gamma \left(\One_{\mathcal{A}_\gamma(\e)} + \gamma \bigg(\gamma |\e| - 1 + \frac{1}{2\gamma}\bigg) \One_{\mathcal{S}_\gamma(\e)} \right) \frac{\e \otimes \Lamg(\e)}{\maxg\{1,\gamma|\e|\} |\e|},
\end{align}
where $\bm{I}_3$ denotes the identity matrix in $\R^{3 \times 3}$.
By multiplying \cref{eq:GatDiffLamg} with $\vb\in\L$ and using $(\e\cdot\w)(\Lamg(\e) \cdot \vb)  = \big(\e\otimes\Lamg(\e)\big)\vb\cdot\w$, for all $\e,\vb,\w\in\R^3$, we obtain
\begin{equation}\label{eq:LamgBxig}
 \Lamg'(\e)\w\cdot\vb = \bxig(\e)\vb\cdot\w \quad \forall  \e,\w,\vb\in\L.
\end{equation}

With \cref{lemma:PropertiesReg} at hand, the well-posedness of \cref{eq:RegVI} follows by the theory of monotone operators \cite[p. 40]{Roubicek13}. Moreover, \cref{eq:Maxc} implies for every $\e\in\L$ that
\begin{equation}\label{eq:LamgJc}
 \maxg\{1,\gamma|\e|\} \geq \gamma|\e| \text{ a.e. in } \Omega.
\end{equation}
Applying this estimate to  \cref{eq:Lamg} yields that 
\begin{equation}\label{eq:LamgJc2}
\norm{\Lamg(\e)}{{\bm L}^\infty(\Omega)} \leq j_c \quad \forall \e\in\L.
\end{equation}
Obviously, \cref{eq:Maxc} yields for every $\e\in\L$ that $\maxg\{1,\gamma|\e|\} \geq 1$ almost everywhere in $\Omega$. Hence, we obtain the following estimate for all $\e,\vb,\w \in \L$
\begin{align}\label{eq:BxigEstimate}
\int_\Omega | \bxig(\e)\vb\cdot\w| \,dx &\,\overbrace{\leq}^{\cref{eq:SgammaEst}}   \int_\Omega \frac{j_c\gamma|\vb\cdot\w|}{\maxg\{1,\gamma|\e|\}} \,dx + \gamma\int_\Omega \frac{\big|\big(\e \otimes \Lamg(\e)\big)\vb\cdot\w\big|}{\maxg\{1,\gamma|\e|\} |\e|} \,dx \\\notag
&\overbrace{\leq}^{\cref{eq:LamgJc2}} 2j_c\gamma \Lnorm{\vb}\Lnorm{\w}.
\end{align}
The next result states the existence of an optimal solution to \cref{eq:Preg}.
\begin{theorem}\label{thm:WellPosednessPreg}
 Let \cref{assump:MaterialAndData} hold and $\gamma >0$ be fixed. Then,  \cref{eq:Preg} admits  an optimal shape $\omgb \in \mathcal{O}$. 
\end{theorem}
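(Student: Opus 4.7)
The plan is to invoke the direct method of the calculus of variations. I take a minimizing sequence $\{\om_n\}_{n\in\N}\subset\mathcal O$ for $J_\gamma$. By \cref{thm:lipCV}, after extraction we may assume $\om_n\to\omt\in\mathcal O$ in the Hausdorff sense and $\One_{\om_n}\to\One_{\omt}$ in $L^1(\Omega)$ (hence strongly in every $L^p(\Omega)$, $p<\infty$). Writing $\E^\gamma_n:=\E^\gamma(\om_n)$ and $\lam^\gamma_n:=\Lamg(\E^\gamma_n)$, I will show that $\E^\gamma_n\rightharpoonup\E^\gamma(\omt)$ in $\Hzcurl$ along the same subsequence, and then that $J_\gamma$ is sequentially lower semicontinuous along this convergence.

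The uniform bound comes from testing \cref{eq:RegVI} with $\vb=\E^\gamma_n$: the coercivity of $a$ from \ref{assump:Material}, together with $\Lamg(0)=0$ and the monotonicity \cref{eq:MonotoneLam} (which yields $\int_{\om_n}\lam^\gamma_n\cdot\E^\gamma_n\,dx\geq 0$), controls $\|\E^\gamma_n\|_{\Hcurl}$ by $\|\f\|_{\L}$ uniformly in $n$. Extracting again, $\E^\gamma_n\rightharpoonup\Egb$ in $\Hzcurl$, and in particular weakly in $\L$. The bound \cref{eq:LamgJc2} gives $\lam^\gamma_n$ bounded in ${\bm L}^\infty(\Omega)$.

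The main obstacle is passing to the limit in the nonlinear term $\int_{\om_n}\Lamg(\E^\gamma_n)\cdot\vb\,dx$, because $\Hzcurl$ does not embed compactly into $\L$ so I cannot extract a strongly convergent subsequence directly. I resolve this with Minty's trick. For arbitrary $\vb\in\Hzcurl$, the monotonicity \cref{eq:MonotoneLam} applied on $\om_n$ yields
\begin{equation*}
a(\E^\gamma_n,\vb-\E^\gamma_n)+\int_\Omega \One_{\om_n}\Lamg(\vb)\cdot(\vb-\E^\gamma_n)\,dx\;\geq\;\int_\Omega \f\cdot(\vb-\E^\gamma_n)\,dx.
\end{equation*}
Now I pass to the limit: weak lower semicontinuity of $a(\cdot,\cdot)$ gives $\limsup_n a(\E^\gamma_n,\vb-\E^\gamma_n)\leq a(\Egb,\vb-\Egb)$; the strong $L^p$ convergence $\One_{\om_n}\to\One_\omt$ combined with $\Lamg(\vb)\in{\bm L}^\infty$ and weak $L^2$-convergence of $\E^\gamma_n$ handles the second term; and the right-hand side passes trivially. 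Choosing $\vb=\Egb+\tau\w$ with $\w\in\Hzcurl$, dividing by $\tau>0$ and sending $\tau\downarrow 0$ (using Lipschitz continuity of $\Lamg$ from \cref{lemma:PropertiesReg}), then doing the same with $\tau<0$, yields that $\Egb$ satisfies \cref{eq:RegVI} on $\omt$. Uniqueness of the solution gives $\Egb=\E^\gamma(\omt)$, so the whole original subsequence converges.

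To finish, I pass to the limit in $J_\gamma$. The volume term satisfies $\int_{\om_n}dx=\int_\Omega\One_{\om_n}dx\to\int_\Omega\One_\omt dx=\int_\omt dx$ by $L^1$-convergence of characteristic functions. For the tracking term, $\E^\gamma_n\rightharpoonup\E^\gamma(\omt)$ in ${\bm L}^2(B)$ and $\kappa\in\mathcal C^1(B)$ is strictly positive by \ref{assump:P}, so the weighted quadratic $\vb\mapsto\tfrac12\int_B\kappa|\vb-\E_d|^2dx$ is convex and continuous on ${\bm L}^2(B)$, hence weakly lower semicontinuous. Therefore
\begin{equation*}
\inf_{\om\in\mathcal O}J_\gamma(\om)=\lim_{n\to\infty}J_\gamma(\om_n)\geq J_\gamma(\omt),
\end{equation*}
which shows $\omgb:=\omt\in\mathcal O$ is optimal for \cref{eq:Preg}.
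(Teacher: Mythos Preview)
Your argument is correct, but the paper takes a more direct route. Instead of extracting a weak limit and identifying it via Minty's trick, the paper proves a Lipschitz stability estimate for the solution map $\om\mapsto\Eg(\om)$: subtracting \cref{eq:RegVI} for $\Egb:=\Eg(\omt)$ and $\Egn:=\Eg(\omng)$, testing with $\Egb-\Egn$, and using the monotonicity \cref{eq:MonotoneLam} on $\omt$ together with $\|\lamgn\|_{{\bm L}^\infty}\le j_c$ yields
\[
\Hcurlnorm{\Egb-\Egn}\le \frac{j_c}{\min\{\underline\nu,\underline\epsilon\}}\,\|\chi_{\omt}-\chi_{\omng}\|_{L^2(\O)}.
\]
This gives \emph{strong} convergence $\Egn\to\Egb$ in $\Hzcurl$, so $J_\gamma(\omng)\to J_\gamma(\omt)$ directly, with no need for lower semicontinuity. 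Your Minty argument is the natural general tool when compactness is lacking, but here the uniform $L^\infty$ bound on the dual variable makes the sharper estimate available; the paper's approach is shorter, avoids the limit procedure $\tau\to 0$, and the resulting Lipschitz estimate is reused later (see \cref{eq:EstimateEgammaEngamma2} in the proof of \cref{thm:ConvergenceEgLamg}).
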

\begin{proof}
 Let $\{\omng\}_{n\in\N}\subset\mathcal{O}$ be a minimizing sequence for \cref{eq:Preg} with the corresponding states $\Egn \in \Hzcurl$ solving \cref{eq:RegVI} for $\om = \omng$ and $\lamgn \coloneqq \Lam( \Egn)$. 
 Thanks to \cref{thm:lipCV}, there exists a subsequence of $\{\omng\}_{n\in\N}$ (with a slight abuse of notation we use the same index for the subsequence) and $\omgb\subset\mathcal{O}$ such that $\omng \to \omgb$ as $n\to\infty$
in the sense of characteristic functions.

We denote the solution to \cref{eq:RegVI} for $\om = \omgb$ by $\Egb\in\Hzcurl$ and $\lamgb \coloneqq \Lamg(\Egb)$. Now, substracting \cref{eq:RegVI} for $\Egn$ from \cref{eq:RegVI} for $\Egb$ and testing the resulting equation with $\vb = \Egb - \Egn$ yields
\begin{align}\label{eq:CalcConvergenceEngamma} 
&a(\Egb - \Egn,\Egb - \Egn) 
 \overset{\hphantom{\cref{eq:MonotoneLam}}}{=} \int_{\O} (\chi_{\omgb}\lamgb -\chi_{\omng}\lamgn)\cdot ( \Egn - \Egb) \, dx\\\notag
 \overset{\hphantom{\cref{eq:MonotoneLam}}}{=} &\, \int_{\O} (\chi_{\omgb} -\chi_{\omng})\lamgn\cdot (\Egn - \Egb) \, dx - \hspace{-0.8cm}\underbrace{ \int_{\O} \chi_{\omgb}(\lamgb -\lamgn)\cdot (\Egb - \Egn) \, dx}_{=(\Lamg(  \chi_{\omgb} \Egn) - \Lamg(  \chi_{\omgb} \Egb),   \chi_{\omgb} \Egn -   \chi_{\omgb} \Egb)_\L}\\ \notag
\underbrace{\le}_{\cref{eq:MonotoneLam}} &\,\int_{\O} (\chi_{\omgb} -\chi_{\omng})\lamgn\cdot (\Egn - \Egb) \, dx.
\end{align}
 Thus, \cref{eq:CalcConvergenceEngamma} and \ref{assump:Material} of \cref{assump:MaterialAndData}  yield
\begin{align} \label{eq:EstimateEgammaEngamma} \notag
\min\{\underline{\nu}, \underline{\epsilon}\}\Hcurlnorm{\Egb -& \Egn}^2 \leq  \norm{\chi_{\omgb} -\chi_{\omng}}{L^2(\O)} \norm{\lamgn}{{\bm L}^\infty(\O)} \Hcurlnorm{\Egb - \Egn} \\
& \overbrace{\Rightarrow}^{\cref{eq:LamgJc}} \quad \Hcurlnorm{\Egb - \Egn} \leq \frac{j_c}{\min\{\underline{\nu}, \underline{\epsilon}\}}\|\chi_{\omgb} -\chi_{\omng}\|_{L^2(\O)}.
\end{align}
This implies $\Egn  \to \Egb$ in $\Hzcurl$ since $\omng$ converges to $\omgb$ in the sense of characteristic functions
as $n\to\infty$. Hence, we obtain
\begin{align*}   
J_\gamma(\omng) = \frac{1}{2}\int_{B}   \kappa  |\Egn  - \E_d |^2  \,dx
+ \int_{\omng }dx 
\to \frac{1}{2}\int_{B}   \kappa  |\Egb - \E_d |^2 \,dx  + \int_{\omgb}\,dx
= J_\gamma(\omgb).
\end{align*}
Finally, the assertion follows since $\omng$ is a minimizing sequence for \cref{eq:Preg}.
\end{proof}

\section{Shape sensitivity analysis} \label{section:SensitivityAnalysis}

This section is devoted to the sensitivity analysis of the shape functional $J_\gamma(\om)$ in \cref{eq:Preg} for $\gamma>0$ fixed. 
We compute the shape derivative using the averaged adjoint method (see \cite{MR3535238,MR3374631}).
Let $\T  : \Omega\to \Omega$ be the flow of a vector field 
$\VV \in \bm{\mathcal{C}}^{0,1}_c(\Omega,\R^3)$
with compact support in $B$, i.e., $\T(\VV)(X) = x(t,X)$ is the solution to the ordinary differential equation 
\begin{equation}\label{eq:TransODE}
\frac{\text{d}}{\text{d}t} x(t,X) = \VV(x(t,X)) \quad \text{ for } t\in[0,\tau],\quad
x(0,X) = X \in \Omega,
\end{equation}
for some given $\tau >0$.
It is well-known  (see \cite[p. 50]{MR1215733}) that \cref{eq:TransODE} admits a unique solution for a sufficiently small $\tau>0$. Note that $\T(B) = B$ and $\T(X) = X$ for every $X\in\Omega\backslash B$ since $\VV$ has compact support in $B$.
For $\om\in\mathcal O$, we introduce the parameterized family of domains 
$\om_t  := \T(\om)$, for all $t\in[0,\tau]$.
Let us now recall the definition of shape derivative used in this paper.
\begin{definition}[Shape derivative]\label{def1}
Let $K :\mathcal{O} \rightarrow \R$ be a shape functional.
The Eulerian semiderivative of $K$ at $\omega\in\mathcal O$ in direction $\VV \in \bm{\mathcal{C}}^{0,1}_c(\O,\R^3)$
is defined as the limit, if it exists,
\begin{equation*}
d K(\omega)(\VV):= \lim_{t \searrow 0}\frac{K(\omega_t)-K(\omega)}{t},
\end{equation*}
where $\omega_t = \T(\omega)$. Moreover, $K$ is said to be \textit{shape differentiable} at $\omega$ if it has a Eulerian semiderivative at $\omega$ for all $\VV \in \bm{\mathcal{C}}^{0,1}_c(\O,\R^3)$ and the mapping
\begin{align*}
d K(\omega):  \bm{\mathcal{C}}^{0,1}_c(\O,\R^3) \to \R,\quad \VV \mapsto dK(\omega)(\VV)
\end{align*}
is linear and continuous. In this case $d K(\omega)(\VV)$ is called the \textit{shape derivative} at $\omega$.
\end{definition}
In the remainder of this section, we consider the perturbed domain $\om_t$ and denote the corresponding solution of \cref{eq:RegVI} for $\om = \om_t$ by $\Egt \in \Hzcurl$.

\subsection{Averaged adjoint method}\label{subsec:AAM}
We begin by introducing the Lagrangian $\ML:\mathcal{O}\times \Hzcurl\times \Hzcurl\to\R$ associated with \cref{eq:Preg} as follows:
\begin{equation}\label{def:Lag}
\ML(\om,\e,\vb): = \frac{1}{2}\int_{B}  \kappa  |\e - \E_d |^2\,dx  
+ \int_{\om}  \, dx 
+ a(\e,\vb) +\int_{\om} \Lamg(\e) \cdot \vb\,dx - \int_\O  \f \cdot \vb\,dx 
\end{equation}
where $\Lamg$ is given as in \cref{eq:Lamg}. 
In view of \cref{def:Lag}, we have for $\omega\in\mathcal{O}$ and $t\in[0,\tau]$ that
\begin{equation}\label{eq:SaddlePointRepresentation}
 J_\gamma(\om_t) = \ML(\om_t,\Egt,\vb) \quad \forall\vb \in\Hzcurl.
\end{equation}
Moreover, as $\ML$ is linear in $\vb$, the problem of finding $\e\in\Hzcurl$ such that
\begin{align*}\label{eq:LagrState}
\partial_{\vb}\ML(\om_t,\e,\vb;\hat\vb)
= a(\e,\hat\vb) +\int_{\om_t} \Lamg(\e) \cdot \hat\vb\,dx - \int_\O  \f \cdot \hat\vb\,dx = 0 \quad\forall \hat\vb\in  \Hzcurl
\end{align*}
is equivalent to \cref{eq:RegVI} with $\om=\om_t$ and admits the same unique solution $\Egt \in\Hzcurl$.
In order to pull back the integrals over $\om_t$ to the reference domain $\om$, one uses the change of variables $x\mapsto \T(x)$.
Furthermore, to avoid the appearance of the composed functions $\e\circ\T$ and $\vb\circ\T$  due to this  change of variables, we reparameterize the Lagrangian using the following covariant transformation, which is known to be a bijection for ${\bm H}_0(\curl)$ (cf. \cite[p. 77]{Monk03}).
\begin{equation}\label{eq:Transform}
\Psi_t \colon \Hzcurl \to \Hzcurl, \qquad \Psi_t (\e) \coloneqq (D\T ^{-\transp}\e)\circ\T ^{-1}.
\end{equation}
Here $D\T\colon\R^3 \to \R^{3\times3}$ stands for the Jacobian matrix function of $\T$ and we denote $D\T^{-\transp} \coloneqq \big(D\T^{-1}\big)^\transp$. It satisfies the important identity (see \cite[Lemma 11]{MR1662168})
\begin{equation}\label{eq:RelCurl}
\big(\curl  \Psi_t (\e )\big)\circ \T  =  \xi(t )^{-1} D\T \curl \e, 
\end{equation}
with $\xi(t) := \det D\T$.
In this paper we always assume $\tau > 0$ small enough such that  $\xi(t) > 0$ for every $t\in[0,\tau]$. That is, the transformation $\T$ preserves orientation. 
In view of the above discussion, we introduce the {\it shape-Lagrangian}  $G:[0,\tau]\times \Hzcurl\times \Hzcurl \rightarrow \R$ as
\begin{multline}\label{eq:DefiShapeLagr}
G(t,\e,\vb)\coloneqq\mathcal{L}(\om_t,\Psi_t (\e),\Psi_t (\vb)) =  \frac{1}{2}\int_{B}  \kappa  |\Psi_t(\e) - \E_d |^2 \,dx  + \int_{\omega_t}  \, dx\\
+ a(\Psi_t(\e),\Psi_t(\vb)) +\int_{\omega_t} \Lamg(\Psi_t(\e)) \cdot \Psi_t(\vb)\,dx - \int_\O  \f \cdot \Psi_t(\vb)\,dx.
\end{multline}
The change of variables $x\mapsto \T(x)$ inside the integrals \cref{eq:Transform,eq:RelCurl} yields
{\small \begin{align} \notag
 G(t,\e,\vb)  = \frac{1}{2}\int_{B}  \kappa\circ\T  |D\T^{-\transp}\e - \E_d \circ\T |^2\xi(t)\,dx + \int_{\omega} \xi(t)\,dx +   \int_{\Omega} \M_1(t) \curl\e  \cdot \\  \label{eq:Gshort}
   \curl\vb +\M_2(t) \e\cdot \vb\,dx+\int_{\omega}  \M_3(t,\e) \cdot  \vb\,dx - \int_\O  (\f \circ\T)  \cdot   (D\T^{-\transp}\vb)  \xi(t)\,dx, 
\end{align}}
with the notations
$\M_1(t)  : = \xi(t )^{-1} D\T^\transp (\nu\circ\T)  D\T$, 
$\M_2(t)   : = \xi(t) D\T^{-1} (\varepsilon\circ\T) D\T^{-\transp}$ and  
$\M_3(t,\e)   : = \xi(t)  D\T^{-1} \Lamg( D\T^{-\transp}\e)$.
Note that the problem of finding $\e_t\in\Hzcurl$ such that $\partial_\vb G(t,\e_t,0;\hat\vb) = 0$ for all $\hat\vb \in \Hzcurl$ is equivalent to \cref{eq:RegVI} with $\omega=\omega_t$ after applying the change of variables $x\mapsto \T(x)$. Hence, it has the same unique solution $\Egt\in \Hzcurl$.

Next, the shape derivative of $J_\gamma$ is obtained as the partial derivative with respect to $t$ of the shape-Lagrangian $G$ given by \cref{eq:Gshort}. 
For the convenience of the reader, we recall the main result of the averaged adjoint method, adapted to our case. A proof can be found in \cite[Theorem 2.1]{MR3535238} (cf. \cite{MR3374631}).
\begin{theorem}[Averaged adjoint method]\label{thm:AAE}
 Let $\gamma>0$. Moreover, we assume that there exists $\tau \in (0,1]$ such that for every $(t,\vb)\in [0,\tau] \times\Hzcurl$
 \begin{enumerate}[label = \textup{({H}\arabic*)}] 
\item\label{H1} the mapping $[0,1]\ni s\mapsto G(t,s\Egt+(1-s)\Egz,\vb)$ is absolutely continuous;
\item\label{H2} the mapping $[0,1]\ni s\mapsto \partial_\e G(t,s\Egt+(1-s)\Egz,\vb;\hat{\e})$ belongs to $L^1(0,1)$ for every $\hat{\e}\in \Hzcurl$;
\item\label{H3} there exists a unique $\Pgt \in\Hzcurl$ that solves the averaged adjoint equation
\begin{equation}\label{averated_adj}
\int_0^1 \partial_\e G(t,s\Egt+(1-s)\Egz,\Pgt;\hat{\e})\, ds =0 \quad \forall \hat{\e}\in \Hzcurl;
\end{equation}
\item\label{H4} the family $\{\Pgt\}_{t\in [0,\tau]}$  satisfies
\begin{equation}\label{eq:DifferenceQuotientAvAdj}
 \lim_{t\searrow 0} \frac{G(t,\Egz,\Pgt)-G(0,\Egz,\Pgt)}{t}=\partial_tG(0,\Egz,\Pgz). 
\end{equation}
\end{enumerate}
Then, $J_\gamma$ is shape-differentiable in the sense of \cref{def1} and it holds that
 \begin{equation*}
 dJ_\gamma(\om)(\VV) =  \frac{d}{dt} J_\gamma(\om_t)|_{t=0} = \partial_t G(0,\Egz,\Pgz),
 \end{equation*}
where $\Pgz$ is the so-called adjoint state solution of \cref{averated_adj} with $t=0$.
\end{theorem}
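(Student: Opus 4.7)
The plan is to exploit the fact that the shape-Lagrangian $G(t,\e,\vb)$ is affine in its third argument, as one sees from \cref{eq:Gshort}: the cost and volume terms do not depend on $\vb$ while the remaining terms depend linearly on $\Psi_t(\vb)$, and $\Psi_t$ itself is linear. Consequently, for each $t\in[0,\tau]$ the state equation $\partial_\vb G(t,\Egt,0;\hat\vb) = 0$ together with this affine structure yields
\begin{equation*}
G(t,\Egt,\vb) = G(t,\Egt,0) = J_\gamma(\omega_t) \qquad \text{for every } \vb \in \Hzcurl.
\end{equation*}
Taking $\vb = \Pgt$ in this identity and in its counterpart at $t=0$ gives the two representations $J_\gamma(\omega_t) = G(t,\Egt,\Pgt)$ and $J_\gamma(\omega) = G(0,\Egz,\Pgt)$.

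Next I would insert the unperturbed state $\Egz$ in the middle slot of $G$ through the telescoping identity
\begin{equation*}
J_\gamma(\omega_t) - J_\gamma(\omega) = \bigl[G(t,\Egt,\Pgt) - G(t,\Egz,\Pgt)\bigr] + \bigl[G(t,\Egz,\Pgt) - G(0,\Egz,\Pgt)\bigr].
\end{equation*}
The first bracket is handled by the fundamental theorem of calculus: by \ref{H1} the map $s\mapsto G(t, s\Egt + (1-s)\Egz, \Pgt)$ is absolutely continuous on $[0,1]$, and by \ref{H2} its $s$-derivative belongs to $L^1(0,1)$, so
\begin{equation*}
G(t,\Egt,\Pgt) - G(t,\Egz,\Pgt) = \int_0^1 \partial_\e G\bigl(t, s\Egt + (1-s)\Egz, \Pgt; \Egt - \Egz\bigr)\, ds.
\end{equation*}
Since $\Egt - \Egz \in \Hzcurl$ is an admissible test direction, this integral vanishes by the averaged adjoint equation \cref{averated_adj} in \ref{H3}.

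Dividing the remaining identity by $t$ and letting $t\searrow 0$ then produces
\begin{equation*}
\lim_{t\searrow 0} \frac{J_\gamma(\omega_t) - J_\gamma(\omega)}{t} = \lim_{t\searrow 0} \frac{G(t,\Egz,\Pgt) - G(0,\Egz,\Pgt)}{t} = \partial_t G(0,\Egz,\Pgz),
\end{equation*}
where the last equality is hypothesis \ref{H4}. This yields both the existence of the Eulerian semiderivative at $\omega$ in direction $\VV$ and the claimed formula; the linearity and continuity in $\VV$ required by \cref{def1} will follow afterwards from the explicit dependence of the matrices $\M_1(t), \M_2(t), \M_3(t,\cdot)$ and of $\xi(t)$ in \cref{eq:Gshort} on the flow $\T$. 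I expect the main technical obstacle to lie precisely in verifying \ref{H4}, since it requires a uniform estimate and a limit behaviour for the averaged adjoint family $\{\Pgt\}_{t\in[0,\tau]}$ as $t\searrow 0$; because the penalized map $\Lamg$ enters the averaged adjoint equation through its \Gateaux{}-derivative, this verification will have to rely on the Lipschitz property of $\Lamg$ from \cref{lemma:PropertiesReg} together with the uniform bound \cref{eq:BxigEstimate}.
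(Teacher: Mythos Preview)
Your argument is correct and is precisely the standard proof of the averaged adjoint method. Note, however, that the paper does not actually prove \cref{thm:AAE}: it is stated as a recall of a known result, with the sentence ``A proof can be found in [Theorem~2.1, MR3535238] (cf.\ MR3374631)'' immediately preceding it. The paper's own work consists in verifying the hypotheses \ref{H1}--\ref{H4} in \cref{lemma:H0,lemma:H1}, not in proving the abstract statement itself.

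Your proof matches what one finds in those references: exploit affinity of $G$ in $\vb$ to write $J_\gamma(\omega_t)=G(t,\Egt,\Pgt)$ and $J_\gamma(\omega)=G(0,\Egz,\Pgt)$, telescope, apply the fundamental theorem of calculus via \ref{H1}--\ref{H2}, kill the resulting integral via \ref{H3} with $\hat\e=\Egt-\Egz$, and conclude with \ref{H4}. Nothing is missing.

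One small remark: your closing paragraph about the ``main technical obstacle'' in verifying \ref{H4} is not part of the proof of \cref{thm:AAE} itself, since \ref{H4} is assumed there. That verification is carried out separately in the paper (\cref{lemma:H1}), and your intuition is accurate: it proceeds by bounding $\Pgt$ uniformly, extracting a weak limit, identifying it as $\Pgz$, and passing to the limit in the difference quotient using the asymptotic expansions of $\xi(t)$, $D\T$, and the \Gateaux{} derivative of $\Lamg$.
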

 We verify that \ref{H1}--\ref{H4} are satisfied so that we may apply \cref{thm:AAE}.
\begin{lemma}\label{lemma:H0}
Let \cref{assump:MaterialAndData} be satisfied. Then,   \ref{H1} and \ref{H2} hold for every $(t,\vb)\in [0,1] \times\Hzcurl$.
\end{lemma}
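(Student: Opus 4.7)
The plan is to split $G(t,\cdot,\vb)$ from \cref{eq:Gshort} into its affine-in-$\e$ contributions and the only genuinely nonlinear term $C(t,\e,\vb) := \int_\omega \xi(t) D\T^{-1}\Lamg(D\T^{-\transp}\e)\cdot\vb\,dx$, and to treat each piece separately. The tracking integral over $B$, the bilinear $\Hcurl$-form involving $\M_1,\M_2$, the volume term $\int_\omega \xi(t)\,dx$, and the source term $-\int_\Omega (\f\circ\T)\cdot(D\T^{-\transp}\vb)\xi(t)\,dx$ are all polynomial of degree at most two in $\e$. Their restriction to the segment $\e_s := s\Egt + (1-s)\Egz$ is therefore a polynomial in $s$, which is $C^\infty$ on $[0,1]$; the absolute continuity required in \ref{H1} and the $L^1$-integrability of the $\e$-derivative required in \ref{H2} are then immediate for these contributions.

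For the remaining term $C(t,\cdot,\vb)$, \ref{H1} will follow from the Lipschitz continuity of $\Lamg\colon\L\to\L$ stated in \cref{lemma:PropertiesReg}. Combined with the uniform boundedness of $D\T$, $D\T^{-\transp}$ and $\xi(t)$ on $\overline{\Omega}$, which is ensured by $\VV\in\bm{\mathcal{C}}^{0,1}_c(\Omega,\R^3)$ and the regularity of the flow \cref{eq:TransODE} for $t\in[0,\tau]$, this yields an estimate of the form
\begin{equation*}
|C(t,\e_{s_1},\vb) - C(t,\e_{s_2},\vb)| \leq K_t\,|s_1-s_2|\,\Lnorm{\Egt - \Egz}\,\Lnorm{\vb},
\end{equation*}
so that $s\mapsto C(t,\e_s,\vb)$ is Lipschitz, hence absolutely continuous on $[0,1]$. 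For \ref{H2}, the \Gateaux-differentiability of $\Lamg$ (again \cref{lemma:PropertiesReg}) and the chain rule give
\begin{equation*}
\partial_\e C(t,\e_s,\vb;\hat\e) = \int_\omega \xi(t) D\T^{-1}\Lamg'(D\T^{-\transp}\e_s)(D\T^{-\transp}\hat\e)\cdot\vb\,dx,
\end{equation*}
and the identity \cref{eq:LamgBxig} together with the $s$-independent bound \cref{eq:BxigEstimate} controls this quantity by $\tilde K_t\,\Lnorm{\hat\e}\,\Lnorm{\vb}$ uniformly in $s\in[0,1]$. Consequently $s\mapsto \partial_\e G(t,\e_s,\vb;\hat\e)$ lies in $L^\infty(0,1)\subset L^1(0,1)$.

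The only nontrivial point of the argument is the chain rule for, and the uniform $s$-bound of, the nonlinear contribution $C(t,\cdot,\vb)$; everything else reduces to elementary calculus on polynomials in $s$. I therefore expect the proof to be a direct verification built upon \cref{lemma:PropertiesReg} and the uniform estimate \cref{eq:BxigEstimate}, with no technical obstacle beyond keeping track of the constants depending on $t$, $D\T$ and $\xi(t)$, all of which remain bounded for $t\in[0,\tau]$ with $\tau>0$ small enough.
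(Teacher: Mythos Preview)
Your proposal is correct and follows essentially the same route as the paper. The paper dispatches \ref{H1} in one line as a direct consequence of \cref{eq:Gshort} and \cref{lemma:PropertiesReg}, whereas you spell out the Lipschitz argument for the nonlinear term $C$ and the polynomial structure of the rest; for \ref{H2} both you and the paper compute $\partial_\e G$ explicitly, bound the $\M_3$-contribution via \cref{eq:LamgBxig} and \cref{eq:BxigEstimate} to obtain an $L^\infty(0,1)$ estimate, and note that the remaining terms are at most affine in $s$.
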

\begin{proof}
First of all,  \ref{H1} is a direct consequence of  \cref{eq:Gshort} and \cref{lemma:PropertiesReg}. Before we proceed to prove \ref{H2}, let us introduce the notation $\ME(s):= s\Egt+(1-s)\Egz$. 
Now, fix $\tau \in (0,1]$ and $(t,\vb)\in [0,\tau]\times \Hzcurl$. Thanks to the \Gateaux-differentiability of $\Lamg$ (\cref{lemma:PropertiesReg}) and using \cref{eq:Gshort}, we may compute
\begin{align}\label{eq:DeG}
\partial_\e G(t,\ME(s),\vb;\hat{\e}) =  \int_{B}   \kappa\circ\T  \big(D\T^{-\transp}\hat\e \cdot (D\T^{-\transp}\ME(s) - \E_d \circ\T )\big)\xi(t)\,dx\\\notag
  +\int_{\Omega} \M_1(t) \curl\hat\e\cdot  \curl\vb  +\M_2(t) \hat\e\cdot \vb\,dx  +\int_{\omega}  \partial_\e\M_3\big(t,\ME(s)\big)\hat\e \cdot  \vb \,dx
\end{align}
for every $\hat\e\in\Hzcurl$, where 
\begin{align}\label{eq:M3Deriv}
\int_\om\partial_\e\M_3\big(t,\ME(s)\big)\hat\e\cdot\vb\,dx \overset{\phantom{\cref{eq:LamgBxig}}}{=}\quad\, &\int_\om\xi(t)  D\T^{-1} \Lamg'\big(D\T^{-\transp} \ME(s)\big)(D\T^{-\transp} \hat\e) \cdot \vb \,dx\\\notag
\overbrace{=}^{\cref{eq:DefiBxig}\&\cref{eq:LamgBxig}} &\int_\om \xi(t) D\T^{-2}\bxig\big(D\T^{-\transp} \ME(s)\big)\vb\cdot  \hat\e\,dx,
\end{align}
Moreover,  the following asymptotic expansions hold (see \cite[Lemma 2.31]{MR1215733}) 
\begin{equation}
 \label{eq:ExpXi} \xi(t)  = 1 + t\Div(\VV) + o(t), \
 D\T = \bI_3 +tD\VV +o(t),\
 D\T^{-1} = \bI_3 - tD\VV + o(t)
\end{equation}
such that $o(t) \slash t \to 0$ as $t \to 0$ with respect to $\norm{\cdot}{{\mathcal{C}}(\Omega)}$ and $\norm{\cdot}{{\mathcal{C}}(\Omega,\R^{3\times3})}$, respectively. Hence, \cref{eq:ExpXi} imply that there exists a constant $C>0$ only dependent on $\VV$ such that
\begin{equation}
 \label{eq:EstXi}\norm{\xi(t)}{L^\infty(\Omega)} + \norm{D\T}{L^\infty(\Omega,\R^{3\times3})} + \norm{D\T^{-1}}{L^\infty(\Omega,\R^{3\times3})}  \leq 1 + C\tau.
\end{equation}
Applying \cref{eq:EstXi} in \cref{eq:M3Deriv} leads to
\begin{align}\label{eq:ContinuityM3}
\left|\int_{\omega}  \partial_\e\M_3(t,\ME(s))\hat\e \cdot  \vb\,dx\right|& \;\;\leq\;\;  (1 + C\tau)^3\int_\om \big|\bxig\big(D\T^{-\transp}\ME(s)\big)\vb\cdot\hat\e\big|\,dx \\\notag
&\overbrace{\leq}^{\cref{eq:BxigEstimate}} 2j_c\gamma (1 + C\tau)^3 \Lnorm{\hat\e}\Lnorm{\vb} \quad \forall s \in (0,1).
\end{align}
Thus, the mapping $s\mapsto\int_{\omega}  \partial_\e\M_3(t,\ME(s))\hat\e \cdot  \vb\,dx$ belongs to $L^\infty(0,1) \subset L^1(0,1)$.
In a similar way, since $t\in[0,\tau]$ and $\gamma >0$ are fixed, \cref{eq:EstXi} and \ref{assump:P} of \cref{assump:MaterialAndData}  yield
\begin{align}\label{eq:EstimateDerivRHS}
  &  \int_{B}  \big| \kappa\circ\T \big( D\T^{-\transp}\hat\e \cdot D\T^{-\transp}\ME(s)\big) \xi(t)\big| \,dx \\\notag
  &\hspace{1cm}\leq \, (1 + C\tau)^3 \norm{\kappa}{\mathcal{C}(\Omega)} \Lnorm{\hat\e}\Lnorm{\ME(s)}\\\notag
  &\hspace{1cm}\leq \, (1 + C\tau)^3 \norm{\kappa}{\mathcal{C}(\Omega)} \Lnorm{\hat\e} \big( \Lnorm{\E_0^\gamma} + s \Lnorm{\Egt - \E_0^\gamma} \big)\\\notag
  &\hspace{1cm}\leq\, (1 + s)(1 + C\tau)^3 \norm{\kappa}{\mathcal{C}(\Omega)} \Lnorm{\hat\e}\big(\Lnorm{\Egt - \E_0^\gamma} + \Lnorm{\E_0^\gamma}\big).
  \end{align}
As the remaining terms in \cref{eq:DeG} are independent of $s$, \cref{eq:ContinuityM3,eq:EstimateDerivRHS} imply that the mapping $s\mapsto \partial_\e G(t,\ME(s),\vb;\hat{\e})$ belongs to $L^1(0,1)$ for all $\hat{\e}\in \Hzcurl$ and  $(t,\vb)\in [0,\tau]\times \Hzcurl$. Thus, the proof is complete.
\end{proof} 
\begin{lemma}\label{lemma:H1}
 Let \cref{assump:MaterialAndData} hold. Then, there exists $\tau \in (0,1]$ such that \ref{H3} is satisfied for every $t\in[0,\tau]$. 
 Moreover, \ref{H4} holds as well.
\end{lemma}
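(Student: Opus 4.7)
\textbf{Plan for \ref{H3}.} The averaged adjoint equation is linear in $\Pgt$: inserting \cref{eq:DeG} and \cref{eq:M3Deriv} into \cref{averated_adj}, and setting $\ME(s) := s\Egt+(1-s)\Egz$, I would rewrite it as $b_t(\Pgt,\hat\e) = \ell_t(\hat\e)$ for all $\hat\e \in \Hzcurl$, where
\begin{equation*}
b_t(\vb,\hat\e) := \int_\Omega \M_1(t)\curl\hat\e \cdot \curl\vb + \M_2(t)\hat\e\cdot\vb\,dx + \int_0^1\!\!\int_\om \xi(t)\, \bxig(D\T^{-\transp}\ME(s))\,D\T^{-\transp}\vb\cdot D\T^{-\transp}\hat\e\,dx\, ds
\end{equation*}
and $\ell_t$ collects the tracking term. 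Continuity of $b_t$ on $\Hzcurl\times\Hzcurl$ is immediate from \cref{eq:BxigEstimate} combined with the uniform bounds in \cref{eq:EstXi}. For coercivity I would exploit two facts: (i) by the asymptotic expansions \cref{eq:ExpXi} and the uniform positive definiteness of $\nu,\epsilon$ in \ref{assump:Material}, the matrices $\M_1(t),\M_2(t)$ remain uniformly positive definite on $[0,\tau]$ provided $\tau>0$ is chosen small enough; (ii) monotonicity of $\Lamg$ (\cref{eq:MonotoneLam}) together with \Gateaux-differentiability yields positive semi-definiteness of $\Lamg'(\e)$, hence, via \cref{eq:LamgBxig}, $\bxig(\e)\w\cdot\w = \Lamg'(\e)\w\cdot\w \geq 0$ for all $\e,\w$. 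This last inequality, applied pointwise to $\w = D\T^{-\transp}\Pgt$ together with $\xi(t)>0$, makes the $s$-integral term non-negative and does not spoil coercivity. Lax--Milgram then delivers unique solvability of \cref{averated_adj} for every $t\in[0,\tau]$ and, as a byproduct, the uniform bound $\|\Pgt\|_\Hcurl \leq C$.

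\textbf{Plan for \ref{H4}.} From \cref{eq:Gshort} every $t$-dependence in $G(t,\Egz,\vb)$ is through smooth quantities ($\xi(t)$, $D\T$, $\kappa\circ\T$, $\f\circ\T$) together with the coefficient $\xi(t) D\T^{-1}\Lamg(D\T^{-\transp}\Egz)$ appearing in the $\Lamg$-term. Differentiating in $t$ is licit pointwise because $\Lamg$ is Lipschitz (\cref{lemma:PropertiesReg}) and the expansions \cref{eq:ExpXi} hold in $\mathcal C(\Omega)$ and $\mathcal C(\Omega,\R^{3\times3})$. I would invoke the fundamental theorem of calculus
\begin{equation*}
\frac{G(t,\Egz,\Pgt)-G(0,\Egz,\Pgt)}{t} = \frac{1}{t}\int_0^t \partial_s G(s,\Egz,\Pgt)\,ds,
\end{equation*}
which is well-defined by the preceding observation. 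Passing to the limit $t\searrow 0$ then reduces to (a) joint continuity of $(s,\vb)\mapsto \partial_s G(s,\Egz,\vb)$ on $[0,\tau]\times \Hzcurl$ (clear from the explicit formula, the bound \cref{eq:LamgJc2}, and \cref{eq:BxigEstimate}) and (b) strong convergence $\Pgt \to \Pgz$ in $\Hzcurl$ as $t\searrow 0$.

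\textbf{Strong convergence of $\Pgt$ to $\Pgz$ and main obstacle.} This is the only delicate point. By the $t$-uniform bound from \ref{H3}, a subsequence of $\Pgt$ converges weakly in $\Hzcurl$ to some $\P_0$. To identify the limit I would subtract the averaged adjoint equations at times $t$ and $0$, test with $\hat\e = \Pgt-\Pgz$, and exploit the coercivity of the limit bilinear form $b_0$ together with the continuity of $t\mapsto\M_1(t),\M_2(t),\xi(t),D\T$ established via \cref{eq:ExpXi}. The delicate ingredient is controlling the difference of the $\bxig$-terms, since $\bxig$ depends non-linearly on its argument and on $t$ through $D\T^{-\transp}\ME(s)$. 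Here I would use that $\Egt \to \Egz$ strongly in $\Hzcurl$ as $t\searrow 0$ (obtained along the lines of \cref{eq:EstimateEgammaEngamma} from \cref{thm:WellPosednessPreg}, replacing $\chi_{\omgb} - \chi_{\omng}$ by contributions coming from $\xi(t)-1$ and the coefficient perturbations, all of which vanish in $L^\infty$ as $t\searrow 0$), combined with the Lipschitz continuity of $\Lamg$ and the continuity of $\e\mapsto \bxig(\e)\w$ on measurable functions of uniformly bounded norm. This yields $\P_0=\Pgz$, strong convergence $\Pgt\to\Pgz$ in $\Hzcurl$, and finally \cref{eq:DifferenceQuotientAvAdj}, completing the verification of \ref{H3} and \ref{H4}.
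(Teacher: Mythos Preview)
Your proposal is correct, and in one place genuinely simpler than the paper's argument.

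\textbf{For \ref{H3}.} The paper establishes coercivity of $B_t$ by splitting $\bxig$ into its three pieces and proving the explicit lower bound \cref{eq:CoercM3}, namely that the $\partial_\e\M_3$-contribution is bounded below by $-(6C\tau+2(C\tau)^3)j_c\gamma\|\vb\|^2$. Your route is shorter: monotonicity of $\Lamg$ together with G\^ateaux differentiability gives $\Lamg'(\e)\w\cdot\w=\bxig(\e)\w\cdot\w\geq 0$ for all $\e,\w$, and since the $\partial_\e\M_3$-term equals $\xi(t)\,\bxig(D\T^{-\transp}\ME(s))\w\cdot\w$ with $\w=D\T^{-\transp}\vb$ and $\xi(t)>0$, it is non-negative for \emph{every} $t$, not only small ones. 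The coercivity constant is then simply that of the $\M_1,\M_2$-part, and the smallness of $\tau$ enters only through \cref{eq:CoercM1M2}. The paper in fact uses exactly this monotonicity observation later, in \cref{eq:deM3pos}, but only for $t=0$.

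\textbf{For \ref{H4}.} The paper works directly with the difference quotient \cref{eq:DifferenceQuotientFormula}, notes it is affine in $\Pgt$ with coefficients converging strongly in $L^\infty$, and passes to the limit using only \emph{weak} convergence $\Pgt\rightharpoonup\Pgz$ in $\Hzcurl$. You instead rewrite the quotient via the fundamental theorem of calculus and argue \emph{strong} convergence $\Pgt\to\Pgz$ by subtracting the averaged adjoint equations. Both routes ultimately need the continuity of $\e\mapsto\bxig(\e)\w$ in $\L$ (the paper to identify the weak limit when passing to $t\to 0$ in \cref{eq:AveragedAdjoint}, you to control $(b_t-b_0)(\Pgz,\cdot)$); this continuity holds because the $C^1$-matching built into $\maxg$ makes the piecewise formula \cref{eq:DefiBxig} continuous pointwise, and dominated convergence with the uniform bound \cref{eq:BxigEstimate} finishes the job---but neither the paper nor your sketch makes this step explicit. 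Your strong-convergence detour is slightly more work than the paper's weak-convergence argument, though it delivers a marginally stronger intermediate conclusion.
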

\begin{proof}
Fix some arbitrary $\tau>0$ and denote $\ME(s) \coloneqq s\Egt + (1-s)\Egz$ for $s\in(0,1)$. 
 Let $\tau \in (0,1]$ be arbitrarily fixed. In the following, if necessary, we shall reduce $\tau  \in (0,1]$ step by step to prove our result. Let  $t\in[0,\tau]$  and $\hat\e \in\Hzcurl$. 
Thanks to \cref{lemma:H0}, the left-hand side of \cref{averated_adj} is well-defined, and our goal is to  prove the existence of a unique  $\Pgt\in\Hzcurl$ satisfying \cref{averated_adj}. In view of \cref{eq:DeG}, we note that \cref{averated_adj} can be written as
\begin{equation}\label{eq:AveragedAdjoint}
B_t(\Pgt,\hat\e) = F_t(\hat\e) \quad \forall\hat{\e}\in \Hzcurl
\end{equation}
with $B_t\colon\Hzcurl\times\Hzcurl\to \R$ and $F_t\colon\Hzcurl\to\R$ defined by
\begin{align*}
 B_t(\vb, \hat\e) &\coloneqq \int_{\Omega}\! \M_1(t)\! \curl\hat\e\cdot  \curl\vb  +\M_2(t) \hat\e\cdot \vb\,dx + \int_0^1\!\! \int_{\omega} \! \partial_\e\M_3(t,\ME(s))\hat\e\cdot  \vb \, dx \,ds,\\
 F_t(\hat\e) &\coloneqq - \int_0^1 \int_{B}   \kappa\circ\T  \left(D\T^{-\transp}\hat\e \cdot \left(D\T^{-\transp}\ME(s) - \E_d \circ\T \right)\right)\xi(t)\,dx\,ds.
\end{align*}
Thanks to \ref{assump:Material} and \cref{eq:EstXi,eq:ContinuityM3}, $B_t$ is a bounded bilinear form. 
In order to apply the Lax-Milgram lemma, we have to prove the coercivity of $B_t$. 
The asymptotic expansions \cref{eq:ExpXi} show that $\M_1(t)$ and $\M_2(t)$ are small perturbations of $\nu$ and $\epsilon$, respectively. Thus,   if necessary, we may reduce the number $\tau \in (0,1]$ such that, in view of \cref{psd}, $\M_1(t)$ and $\M_2(t)$ are uniformly positive definite  for all    $t\in[0,\tau]$ with:
\begin{align}\label{eq:CoercM1M2}
 \int_{\Omega} \M_1(t) \curl\vb\cdot\curl\vb  +\M_2(t) \vb\cdot \vb\,dx \geq C_1 \Hcurlnorm{\vb}^2 \quad \forall\vb\in\Hzcurl,
\end{align}
for some constant $C_1>0$ depending  only on $\VV,\epsilon$ and $\nu$. In order to keep the notation short, let us define $\DTME(s) \coloneqq D\T^{-\transp}\ME(s) \in\Hzcurl$ as well as the sets ${\mathcal{A}}_\gamma(s) \coloneqq \mathcal{A}_\gamma(\DTME(s)) \subset \Omega$ and ${\mathcal{S}}_\gamma(s) \coloneqq \mathcal{S}_\gamma(\DTME(s)) \subset \Omega$ for $s\in (0,1)$ (cf. \cref{lemma:PropertiesReg}). 
We estimate the third term in $B_t$ which, in view of \cref{eq:DefiBxig,eq:M3Deriv}, corresponds to
\begin{multline}\label{eq:ToEstimate}
\int_0^1\int_\om \partial_e \M_3(t, \ME(s))\vb\cdot \vb\,dx\,ds= \int_0^1\int_\om \xi(t) D\T^{-2} \left[\frac{j_c\gamma \bm{I}_3}{\maxg\{1,\gamma|\DTME(s)|\}} \right. \\ 
        \left. - \gamma \left(\One_{\mathcal{A}_\gamma(s)} + \gamma \bigg(\gamma |\DTME(s)| - 1 + \frac{1}{2\gamma}\bigg) \One_{\mathcal{S}_\gamma(s)} \right) \frac{\DTME(s) \otimes \Lamg(\DTME(s))}{\maxg\{1,\gamma|\DTME(s)|\} |\DTME(s)|}\right]\vb\cdot\vb \, dx\,ds.
\end{multline}
Therefore, we fix $s\in(0,1)$ and estimate the three summands in \cref{eq:ToEstimate} separately. 
We begin with the first term and note that \cref{eq:ExpXi} implies,  possibly after reducing $\tau>0$,  that there exists a constant $C>0$, depending only on $\VV$, such that
$ \xi(t) \geq 1 - C\tau>0$, and  $D\T^{-2}\bm{\eta}\cdot \bm{\eta} \geq (1 - C\tau)^2 |\bm{\eta}|^2$ for all $\bm{\eta}\in\R^3$
and almost everywhere in $\Omega$. 
Hence,
\begin{equation}\label{eq:EstPositivePart}
 \int_\om j_c\gamma\xi(t)\frac{ D\T^{-2} \vb\cdot\vb}{\maxg\{1,\gamma|\DTME(s)|\}}\,dx \geq (1 - C\tau)^3 \int_\om \frac{j_c\gamma|\vb|^2}{\maxg\{1,\gamma|\DTME(s)|\}}\,dx.
\end{equation}
Now, we proceed to estimate the integrals over the disjoint sets $\om\cap \mathcal{A}_\gamma(s)$ and $\om\cap\mathcal{S}_\gamma(s)$ appearing in the last two summands in \cref{eq:ToEstimate}. 
We obtain
\begin{align}\label{eq:EstAgammaPart}
 &\left|\int_{\om \cap \mathcal{A}_\gamma(s)} \gamma  \xi(t)D\T^{-2}\frac{\DTME(s) \otimes \Lamg(\DTME(s))\vb\cdot \vb}{\maxg\{1,\gamma|\DTME(s)|\} |\DTME(s)|} \,dx\right|\\\notag
 \overbrace{\leq}^{\cref{eq:Lamg}\&\cref{eq:LamgJc2}}\,
    &\norm{\xi(t)}{L^\infty(\Omega)}\norm{D\T^{-1}}{L^\infty(\Omega,\R^{3\times3})}^2 \int_{\om\cap\mathcal{A}_\gamma(s)} \frac{j_c\gamma|\vb|^2}{\maxg\{1,\gamma|\DTME(s)|\}}  \,dx\\\notag
 \overbrace{\leq}^{\cref{eq:EstXi}} 
     &(1 + C\tau)^3\int_{\om\cap\mathcal{A}_\gamma(s)} \frac{j_c\gamma|\vb|^2}{\maxg\{1, \gamma|\DTME(s)|\}} \,dx.
\end{align}
For the last summand,  we use the same arguments and also \cref{eq:SgammaEst} to deduce
\begin{multline}\label{eq:EstSgammaPart}
  \left|\int_{\om\cap\mathcal{S}_\gamma(s)} \gamma^2 \left(\gamma |\DTME(s)| - 1 + \frac{1}{2\gamma}\right) \xi(t)  D\T^{-2}  \frac{\DTME(s) \otimes \Lamg(\DTME(s))\vb\cdot \vb}{\maxg\{1,\gamma|\DTME(s)|\}\, |\DTME(s)|} \,dx\right|\\
 \leq (1 + C\tau)^3\int_{\om\cap \mathcal{S}_\gamma(s)} \frac{j_c\gamma|\vb|^2}{\maxg\{1, \gamma|\DTME(s)|\}}\,dx.
\end{multline}
Note that the constant $C>0$  in \cref{eq:EstPositivePart,eq:EstAgammaPart,eq:EstSgammaPart} is the same in the three inequalities. 
Thus, we sum up \cref{eq:EstAgammaPart,eq:EstSgammaPart} and substract the result from \cref{eq:EstPositivePart} to obtain
\begin{align*}
 \int_\om \partial_e \M_3(t, \ME(s)) \vb\cdot \vb \,dx 
%
 &\geq \big(1 + 3(C\tau)^2\big)\int_{\om \backslash (\mathcal{A}_\gamma(s) \cup \mathcal{S}_\gamma(s))} \frac{j_c\gamma |\vb|^2}{\maxg\{1, \gamma|\DTME(s)|\}}\,dx \\\notag
                &\quad - ( 6C\tau + 2(C\tau)^3 ) \int_{\omega} \frac{ j_c\gamma|\vb|^2}{\maxg\{1, \gamma|\DTME(s)|\}}\,dx.
\end{align*}
As the first term is non-negative and $\maxg\{1, \gamma |\DTME(s)|\} \geq 1$, we conclude for \cref{eq:ToEstimate} that
\begin{align}\label{eq:CoercM3}
 \int_0^1\int_\om \partial_e \M_3(t, \ME(s))\vb\cdot \vb \,dx\,ds \geq   - ( 6C\tau + 2(C\tau)^3 )  j_c\gamma \norm{\vb}{{\bm L}^2(\om)}^2.
\end{align}
The coercivity of $B_t$ follows, as \cref{eq:CoercM1M2} in combination with \cref{eq:CoercM3} implies   that
\begin{align}\label{coerB}
 B_t(\vb,\vb) \geq  \underbrace{(C_1  -   6C\tau - 2(C\tau)^3 )}_{\eqqcolon C_2 } \Hcurlnorm{\vb}^2  \quad\forall\vb\in\Hzcurl.
\end{align}
If necessary, we  further reduce $\tau \in (0,1]$ such that  $C_2>0$ holds true. In turn, for all $t\in[0,\tau]$, $B_t$ is coercive with the coercitivity constant $C_2>0$, independent of $t$. Ultimately, the Lax-Milgram lemma yields the existence of a unique solution $\Pgt\in\Hzcurl$ of the averaged adjoint equation \cref{averated_adj}. Thus, \ref{H3} holds.

We finish this proof by verifying \ref{H4}.  To this aim, let $\{t_k\}_{k \in \N} \subset  (0,\tau]$ be a null sequence. 
First of all,    the sequence    $\{\E^\gamma_{t_k}\}_{k \in \N} \subset \Hzcurl$ of solutions  to the perturbed state equations  \cref{eq:RegVI} with $\om = \om_{t_k}$ is   bounded. 
This follows readily by inserting $\vb=\E^\gamma_{t_k}$ into  \cref{eq:RegVI} which yields
\begin{multline}\label{eq:BoundednessEgt}
 \min(\underline{\nu},\underline{\epsilon})\Hcurlnorm{\E^\gamma_{t_k}}^2 \leq a(\E^\gamma_{t_k},\E^\gamma_{t_k}) 
 \leq (\Lnorm{\f} + j_c) \Hcurlnorm{\E^\gamma_{t_k}} \\
 \quad\Rightarrow \quad \Hcurlnorm{\E^\gamma_{t_k}} \leq \min(\underline{\nu},\underline{\epsilon})^{-1} (\Lnorm{\f} + j_c) \quad \forall k \in \N.
\end{multline}
Hereafter, we deduce a similar estimate for $\{\P^\gamma_{t_k}\}_{k \in \N}$ by testing \cref{eq:AveragedAdjoint} with $\hat\e = \P^\gamma_{t_k}$ and using  \cref{coerB} along with \cref{eq:EstXi}:\begin{multline}\label{eq:BoundednessPgt}
C_2\|\P^\gamma_{t_k}\|_{\Hzcurl }^2 \leq B_t(\P^\gamma_{t_k},\P^\gamma_{t_k}) = F_t(\P^\gamma_{t_k})\\ \leq {\|\kappa\|_{\mathcal C(\Omega) }(1+C\tau)^3}(\Lnorm{\E^\gamma_{t_k}} + \Lnorm{\Egz} +  \|\E_d\|_\L) \Lnorm{\P^\gamma_{t_k}} \quad \forall k \in \N.
\end{multline}
Since the constant $C_2$ and $C$ are independent of $k\in \N$, the above estimate implies the boundedness of $\{\P^\gamma_{t_k}\}_{k \in \N} \subset \Hzcurl$. Hence,   there exists a subsequence $\{t_{k_j}\}_{j\in\N}\subset\{t_k\}_{k\in\N}$  converging weakly in $\Hzcurl$ to some  $\P^\star\in\Hzcurl$. 
By \cref{eq:ExpXi} and as the solution of \cref{eq:AveragedAdjoint} is unique, passing to the limit $t=t_{k_j}\to 0$ in \cref{eq:AveragedAdjoint} yields $\P^\star = \Pgz$. 
Since $\Pgz$ is independent of  the choice of the subsequence $\{t_{k_j}\}_{j\in\N}$, a standard argument implies  the weak convergence of the whole sequence:
\begin{equation}\label{weakconP}
\P^\gamma_{t_{k}} \rightharpoonup  \P^\star \quad  \textrm{weakly in } \Hzcurl \quad \textrm{as } k \to \infty.
\end{equation}
  Let us now consider the differential quotient
\begin{align}\notag
\frac{G(t_k,\Egz,\P^\gamma_{t_{k}})-G(0,\Egz,\P^\gamma_{t_{k}})}{t_k} 
 =  \int_{B}  \frac{\M_0(t_k) - \M_0(0)}{t_k}\,dx + \int_{\omega} \frac{\xi(t_k) - \xi(0)}{t_k}\,dx \\\notag
 +\int_{\Omega} \frac{\M_1(t_k) - \M_1(0)}{t_k} \curl\Egz\cdot  \curl\P^\gamma_{t_{k}} + \frac{\M_2(t_k) - \M_2(0)}{t_k} \Egz\cdot \P^\gamma_{t_{k}}\,dx \\ 
\label{eq:DifferenceQuotientFormula}  +\int_{\omega}  \frac{\M_3(t_k,\Egz) - \M_3(0,\Egz)}{t_k} \cdot \P^\gamma_{t_{k}}\,dx - \int_\O  \frac{\M_4(t_k) - \M_4(0)}{t_k} \cdot\P^\gamma_{t_{k}}\,dx,
\end{align}
with
$\M_0(t_k):= \frac{1}{2} \kappa\circ\bm{T}_{t_k}  |D\bm{T}_{t_k}^{-\transp}\Egz - \E_d \circ \bm{T}_{t_k} |^2\xi(t_k)$ and $
\M_4(t_k) := \xi(t_k) D\bm{T}_{t_k}^{-1} (\f \circ\bm{T}_{t_k})$.
First, \cref{eq:ExpXi} yields the strong convergence  
\begin{equation}\label{eq:Xiprime}
\lim_{k \to \infty}\frac{\xi(t_k) - \xi(0)}{t_k} = \divv\VV \quad \textrm{in } \mathcal C(\Omega).
\end{equation}
Moreover, thanks to \cref{assump:MaterialAndData}, \cref{eq:ExpXi} and  $\operatorname{supp} \VV \subset\subset B$,  we obtain the strong convergence of    $\big(\M_i(t_k) - \M_i(0) \big) \slash t_k$, $i = 0,1,2,4$, as $k \to \infty$ in $L^\infty(\O)$:
\begin{align}\label{eq:M0prime}
\lim_{k \to \infty}\frac{ \M_0(t_k)  -  \M_0(0)}{t_k} &= \frac{1}{2}(\widetilde{\nabla\kappa}\cdot\VV + \kappa\Div\VV) |\Egz - \E_d|^2 \\ \notag &\qquad- \kappa(\Egz - \E_d)\cdot(D\VV^\transp\Egz - \widetilde{D\E_d}\VV)  \\
 \label{eq:M1prime} \lim_{k \to \infty}\frac{ \M_1(t_k)  -  \M_1(0)}{t_k}   & = - (\divv\VV) \nu + D\VV^\transp \nu  +\nu D\VV + \widetilde{D\nu}\VV , \\
 \label{eq:M2prime} \lim_{k \to \infty}\frac{ \M_2(t_k)  -  \M_2(0)}{t_k}  & = (\divv\VV)\varepsilon - D\VV\varepsilon - \varepsilon D\VV^\transp + \widetilde{D\varepsilon}\VV ,\\
 \label{eq:M4prime} \lim_{k \to \infty}\frac{ \M_4(t_k)  -  \M_4(0)}{t_k}  & = (\divv\VV)\f - D\VV\f + \widetilde{D\f}\VV.
\end{align}
Note that $\widetilde{\nabla\kappa}$ denotes the zero extension of $\nabla\kappa_{\vert B} \in   \boldsymbol{\mathcal C}(B)$ to $\Omega$. The same notation is used for  $\widetilde{D\E_d}, \widetilde{D\varepsilon}, \widetilde{D\nu},\widetilde{D\f}$.
Similarly, by the \Gateaux-differentiability of $\Lamg$ (see \cref{lemma:PropertiesReg}), \cref{eq:LamgBxig} and \cref{weakconP}, we deduce that    
\begin{multline}
 \label{eq:M3prime}
   \lim_{k \to \infty}\frac{ \M_3(t_k)  -  \M_3(0)}{t_k}  \cdot \P^\gamma_{t_k} 
=  \big((\divv\VV) \Lamg(\Egz) -D\VV\Lamg(\Egz)\big)\cdot\Pgz  \\- \bxig(\Egz)\Pgz\cdot(D\VV^\transp\Egz).
\end{multline}
From \cref{eq:Xiprime,eq:M0prime,eq:M1prime,eq:M2prime,eq:M3prime,eq:M4prime}  along with the weak convergence \cref{weakconP} and $\operatorname{supp} \VV \subset\subset B$,  it follows that
\begin{align}\label{eq:PartialDerivativeG}
&\lim_{k \to \infty}\frac{G(t_k,\Egz,\P^\gamma_{t_{k}})-G(0,\Egz,\P^\gamma_{t_{k}})}{t_k} \\\notag
 = & \,\int_{B}  \frac{1}{2}  (\nabla\kappa\cdot\VV + \kappa\divv\VV)|\Egz - \E_d|^2 - \kappa(\Egz - \E_d)\cdot (D\VV^\transp \Egz + D\E_d \VV)\,dx\\\notag
& + \int_{\omega} \divv\VV\,dx +\int_{B}\! \big(- (\divv\VV) \nu + D\VV^\transp \nu  +\nu D\VV + D\nu\VV\big) \curl\Egz\cdot  \curl\Pgz\,dx\\  \notag
& +\int_{B} \big((\divv\VV)\varepsilon - D\VV\varepsilon - \varepsilon D\VV^\transp + D\varepsilon\VV\big) \Egz\cdot \Pgz \,dx\\\notag
& +\int_{\om} (\divv\VV) \Lamg(\Egz)\cdot  \Pgz - D\VV \Lamg(\Egz)\cdot  \Pgz - \bxig(\Egz)\Pgz\cdot(D\VV^\transp\Egz)\,dx \\\notag
& -\int_B (D\f\VV + (\divv\VV )\f)\cdot \Pgz  - \f\cdot D\VV^\transp\Pgz\,dx \\\notag
=&\,\lim_{k \to \infty}\frac{G(t_k,\Egz,\Pgz)-G(0,\Egz,\Pgz)}{t_k} = \partial_t G(0,\Egz,\Pgz).
\end{align}
Thus, \ref{H4} is valid.
\end{proof}
It is easy to see that in the case $t=0$, the solution $\Pgz\in\Hzcurl$ of \cref{averated_adj} also satisfies the equation
\begin{align}\label{eq:LagrAdj} 
\partial_{\e}\ML(\om,\Egz,\Pgz;\hat\e) = 0 \quad\forall \hat\e\in  \Hzcurl.
\end{align}
By definition of the Lagrangian \cref{def:Lag} and by \cref{eq:M3Deriv} we conclude that \cref{eq:LagrAdj} is equivalent to
\begin{equation}\label{eq:AdjointEquation}
 a(\hat\e,\Pgz) + \int_\om \bxig(\Egz)\Pgz\cdot\hat\e \,dx = - \int_{B}  \kappa  (\Egz - \E_d)\cdot \hat\e\,dx, \quad\forall\hat\e\in\Hzcurl.
\end{equation}
We refer to \cref{eq:AdjointEquation} as the \textit{adjoint equation} and  we write for simplicity
$(\Eg,\Pg) =(\Egz,\Pgz)$.
We now  have all the elements at hand to prove the shape differentiability of $J_\gamma$  and write the distributed expression of the shape derivative of $J_\gamma$.
\begin{theorem}\label{thm:ShapeDerivative} \label{thm:sturm}
Let \cref{assump:MaterialAndData} be satisfied, $\gamma >0$,  $\om \in\mathcal{O}$ and $\VV \in \bm{\mathcal{C}}^{0,1}_c(\Omega)$ with a compact support in $B$. 
Furthermore, $\Eg\in\Hzcurl$ and $\Pg\in\Hzcurl$ denote the solutions to \cref{eq:RegVI,eq:AdjointEquation}, respectively. 
Then, the functional $J_\gamma$ in \cref{eq:Preg} is shape differentiable with
\begin{equation}\label{eq:ShapeDerivative}
dJ_\gamma(\om)(\VV) = \partial_t G(0,\Eg, \Pg)= \int_{B}  S_1^\gamma : D\VV + \bS_0^\gamma\cdot\VV\,dx,
\end{equation}
where $S_1^\gamma\in L^1(B,\R^{3\times 3})$ and $\bS_0^\gamma\in {\bm L}^1(B)$  are given by
\begin{align*}
S_1^\gamma & = \Big[\frac{\kappa}{2} |\Eg - \E_d|^2 + \chi_\om
- \nu \curl\Eg\cdot  \curl\Pg
+ \varepsilon \Eg\cdot \Pg 
+ \chi_\om \Lamg(\Eg)\cdot\Pg\\
&\qquad-\f\cdot\Pg\Big]  \bI_3
%
- \kappa \Eg\otimes (\Eg - \E_d)
+\nu \curl\Eg\otimes\curl\Pg \\
&\quad+ \nu^\transp \curl\Pg\otimes\curl\Eg 
%
- \Pg \otimes \varepsilon \Eg - \Eg\otimes \varepsilon^\transp \Pg + \Pg\otimes\f \\
%
&\quad- \chi_\om\Lamg(\Eg)\otimes \Pg 
- \Eg \otimes \bxig(\Eg)\Pg ,\\
\bS_0^\gamma & = \frac{\nabla\kappa}{2} |\Eg - \E_d|^2 - \kappa D\E_d^\transp (\Eg -\E_d)
+ (D\nu^\transp\curl\Eg)  \curl\Pg \\
&\qquad+ (D\epsilon^\transp\Eg)\Pg - D\f^\transp \Pg.
\end{align*}
\end{theorem}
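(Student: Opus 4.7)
The plan is to invoke the averaged adjoint framework assembled in \cref{thm:AAE,lemma:H0,lemma:H1}, which reduces the task to explicitly evaluating $\partial_t G(0,\Eg,\Pg)$ from the expression \cref{eq:Gshort} of the shape-Lagrangian. Indeed, \cref{lemma:H0,lemma:H1} establish that hypotheses \ref{H1}--\ref{H4} hold for some $\tau\in(0,1]$, so \cref{thm:AAE} immediately gives $dJ_\gamma(\om)(\VV)=\partial_tG(0,\Egz,\Pgz)$, with $\Pgz = \Pg$ being, at $t=0$, the solution of \cref{eq:AdjointEquation}. No further use of \cref{eq:AdjointEquation} is needed to \emph{derive} the formula, since the Lagrangian is designed so that material derivatives of the state are absorbed via the adjoint; the adjoint equation is invoked only implicitly through the existence of $\Pg$.

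Next, I would differentiate \cref{eq:Gshort} term by term at $t=0$ using the asymptotic expansions \cref{eq:ExpXi}, together with the chain rule for the compositions $\kappa\circ\T$, $\E_d\circ\T$, $\nu\circ\T$, $\epsilon\circ\T$, and $\f\circ\T$, invoking \cref{assump:MaterialAndData} to justify taking $t$-derivatives pointwise in $B$. These computations have already been performed in \cref{eq:Xiprime,eq:M0prime,eq:M1prime,eq:M2prime,eq:M4prime} for the smooth pieces and in \cref{eq:M3prime} for the $\Lamg$-term using the \Gateaux{}-derivative identity \cref{eq:LamgBxig}. Writing $\operatorname{supp}\VV \subset\subset B$ so that $\chi_{\om_t} - \chi_\om$ localizes inside $B$, the $\int_{\om_t}\,dx$ term contributes $\int_\om \divv\VV\,dx$, and the state- and adjoint-independent parts can be read off directly.

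The main bookkeeping step is to re-express every contribution of the form $A\vb\cdot\w$ coming from the linearizations as a Frobenius inner product with $D\VV$ plus a term linear in $\VV$. The useful identities are $(D\VV^\transp M\vb)\cdot\w = D\VV:(M\vb\otimes\w)$ and $(MD\VV\vb)\cdot\w = D\VV:(M^\transp\w\otimes\vb)$, so that, for instance, the contribution
\[
\big(-(\divv\VV)\nu + D\VV^\transp\nu + \nu D\VV\big)\curl\Eg\cdot\curl\Pg
\]
is regrouped as $\big(-\nu\curl\Eg\cdot\curl\Pg\,\bI_3 + \nu\curl\Eg\otimes\curl\Pg + \nu^\transp\curl\Pg\otimes\curl\Eg\big):D\VV$, while the term $D\nu\VV\,\curl\Eg\cdot\curl\Pg$ is rewritten as $(D\nu^\transp\curl\Eg)\curl\Pg\cdot\VV$ and goes into $\bS_0^\gamma$. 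The same device, applied to the $\epsilon$-, $\kappa$-, $\f$-, and $\Lamg$-terms (using \cref{eq:LamgBxig} once more for the $\bxig$-contribution), produces the announced components of $S_1^\gamma$ and $\bS_0^\gamma$. The only nonobvious bookkeeping is tracking signs and transposes in the non-symmetric contributions $-\Pg\otimes\epsilon\Eg - \Eg\otimes\epsilon^\transp\Pg$ and the $\bxig$-piece $-\Eg\otimes\bxig(\Eg)\Pg$, which arise from $D\VV$ vs. $D\VV^\transp$ on the left vs. right of the matrix product.

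The main obstacle I anticipate is the algebraic reorganization in the non-smooth part: care is required to distribute the $\partial_t$-derivative of $\M_3(t,\Egz) = \xi(t)D\T^{-1}\Lamg(D\T^{-\transp}\Egz)$ both through the outer matrix $\xi(t)D\T^{-1}$ and through the argument $D\T^{-\transp}\Egz$ of $\Lamg$, producing simultaneously the $(\divv\VV)\Lamg(\Eg)\cdot\Pg - D\VV\,\Lamg(\Eg)\cdot\Pg$ contribution (readily rewritten as a $D\VV$-inner product) and the $-\bxig(\Eg)\Pg\cdot D\VV^\transp\Eg$ contribution from \cref{eq:LamgBxig}, the latter becoming the $-\Eg\otimes\bxig(\Eg)\Pg$ entry of $S_1^\gamma$. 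Once every term has been recast in this form and the scalar trace-type terms are gathered on the diagonal via $\bI_3$, the announced formula follows, and shape differentiability in the sense of \cref{def1} holds since $\VV\mapsto\int_B S_1^\gamma:D\VV + \bS_0^\gamma\cdot\VV\,dx$ is clearly linear and continuous on $\bm{\mathcal{C}}^{0,1}_c(\Omega)$ given $S_1^\gamma\in L^1(B,\R^{3\times3})$ and $\bS_0^\gamma\in{\bm L}^1(B)$, whose integrability follows from $\Eg,\Pg\in\Hzcurl$ together with \cref{eq:LamgJc2,eq:BxigEstimate} and \cref{assump:MaterialAndData}.
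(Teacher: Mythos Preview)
Your proposal is correct and follows essentially the same route as the paper: invoke \cref{thm:AAE} via \cref{lemma:H0,lemma:H1} to obtain $dJ_\gamma(\om)(\VV)=\partial_tG(0,\Eg,\Pg)$, then rewrite the already-computed expression \cref{eq:PartialDerivativeG} in tensor form using the identities $D\VV:(\bm x\otimes\bm y)=D\VV^\transp\bm x\cdot\bm y$ and the third-order tensor transpose rules for $D\nu,D\epsilon$. The paper's proof is slightly terser about the bookkeeping, but your outline of how each term lands in $S_1^\gamma$ versus $\bS_0^\gamma$ matches exactly.
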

\begin{proof}
 Thanks to \cref{lemma:H0,lemma:H1}, we may apply the averaged adjoint method (see \cref{thm:AAE}). This yields that $J_\gamma$ is shape-differentiable in the sense of \cref{def1} and the shape derivative satisfies
 \begin{equation}\label{eq:RepresShapeDeriv}
 dJ_\gamma(\om)(\VV) =  \frac{d}{dt} J_\gamma(\om_t)|_{t=0} = \partial_t G(0,\Eg,\Pg),
 \end{equation}
 where $\partial_t G(0,\Eg,\Pg)$ is given by \cref{eq:PartialDerivativeG}. 
We note that $D\epsilon, D\nu \colon \Omega \to \R^{3\times 3\times 3}$ are third-order tensors, and their transpose $D\epsilon^\transp$, $D\nu^\transp$ satisfy $(D\epsilon\VV)\Eg\cdot\Pg = (D\epsilon^\transp\Eg)\Pg\cdot\VV$, and  $(D\nu \VV)\curl\Eg  \cdot\curl\Pg  = (D\nu^\transp\curl\Eg)  \curl\Pg \cdot\VV$ ; see \cite[Proposition 3.1]{qi2017transposes}.  
Furthermore, for vectors ${\bm x},{\bm y}\in\R^3$ we have the relations
$D\VV : ({\bm x}\otimes {\bm y}) = {\bm x}\cdot D\VV {\bm y} = D\VV^\transp {\bm x}\cdot  {\bm y}.$
Applying  these 
to \cref{eq:PartialDerivativeG} and combining it with \cref{eq:RepresShapeDeriv}, the tensor expression \cref{eq:ShapeDerivative} for the shape derivative follows.
Finally, the fact that $S_1^\gamma\in L^1(B,\R^{3\times 3})$ and $\bS_0^\gamma\in {\bm L}^1(B)$ is a straightforward consequence of the regularity of $\Eg, \Pg$ and of the  other functions involved in the expressions of $\bS_0^\gamma$ and $S_1^\gamma$.
This completes the proof.
\end{proof}

\section{Stability and convergence analysis} \label{section:StabilityAnalysis}
In this section we analyze the stability of the shape derivative \cref{eq:ShapeDerivative} with respect to the penalization parameter $\gamma>0$.
Furthermore, the strong convergence of   \cref{eq:Preg} towards   \cref{eq:P} as $\gamma\to\infty$ is studied. The latter also implies the existence of an optimal shape for \cref{eq:P} (see \cref{thm:ExistenceP}). 

\subsection{Stability analysis of the shape derivative}
\begin{theorem}\label{thm:ConvergenceShapeDerivative}
 Let $\om\in\mathcal O$ and \cref{assump:MaterialAndData} hold. Then, the following stability estimate holds
\begin{equation}\label{eq:StabilityShapeDeriv}
 |dJ_\gamma(\om)(\VV)| \leq C \norm{\VV}{\bm{\mathcal{C}}^{0,1}(B)} \quad \forall \VV\in \bm{\mathcal{C}}^{0,1}_c(\Omega), \, \operatorname {supp} \VV \subset\subset B, 
\end{equation}
with a constant $C = C(j_c, \kappa, \epsilon, \nu, \f,  \E_d, B, \omega)$ independent of $\gamma$. 
\end{theorem}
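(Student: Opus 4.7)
The plan is to bound each term appearing in the explicit expressions of $S_1^\gamma$ and $\bS_0^\gamma$ from \cref{thm:ShapeDerivative} in $L^1(B)$ by a constant independent of $\gamma$. The only summand for which this is not immediate is $\Eg\otimes\bxig(\Eg)\Pg$, since $\bxig$ contains explicit $\gamma$-factors; the main obstacle is to exhibit the cancellation that makes this contribution $\gamma$-uniform once contracted with $D\VV$.

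First, I would derive two $\gamma$-uniform energy estimates. Testing the penalized state equation \cref{eq:RegVI} with $\vb=\Eg$, observing $\Lamg(\Eg)\cdot\Eg\ge 0$ from the definition \cref{eq:Lamg}, and invoking \cref{psd} yields $\Hcurlnorm{\Eg}\le \Lnorm{\f}/\min(\underline{\nu},\underline{\epsilon})$, while $\norm{\Lamg(\Eg)}{{\bm L}^\infty(\Omega)}\le j_c$ is guaranteed by \cref{eq:LamgJc2}. Next, I test the adjoint equation \cref{eq:AdjointEquation} with $\hat\e=\Pg$. The crucial point is that the $3\times 3$ matrix $\bxig(\e)$ is symmetric and positive semidefinite a.e.: symmetry follows because $\Lamg(\e)$ is parallel to $\e$ (so $\e\otimes\Lamg(\e)$ is symmetric), and positivity is inherited from the pointwise monotonicity of the superposition operator $\Lamg:\R^3\to\R^3$ (which is the pointwise counterpart of \cref{eq:MonotoneLam}) together with the identity \cref{eq:LamgBxig}. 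Hence $\int_\om\bxig(\Eg)\Pg\cdot\Pg\,dx\ge 0$, which combined with \cref{psd} gives
\begin{equation*}
\min(\underline{\nu},\underline{\epsilon})\,\Hcurlnorm{\Pg}^2 \le \norm{\kappa}{\mathcal{C}(B)}\bigl(\Lnorm{\Eg}+\Lnorm{\E_d}\bigr)\Lnorm{\Pg},
\end{equation*}
and therefore a $\gamma$-uniform bound on $\Hcurlnorm{\Pg}$.

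The key step is the pointwise estimate of the critical factor $\bxig(\Eg)\Pg\cdot(D\VV^\transp\Eg)$, which equals $D\VV:(\Eg\otimes\bxig(\Eg)\Pg)$. Writing out \cref{eq:DefiBxig}, every $\gamma$-factor is absorbed by a factor $|\Eg|$ arising from the contraction with $\Eg$: the term $j_c\gamma\bI_3/\maxg\{1,\gamma|\Eg|\}$ produces a factor $\gamma|\Eg|/\maxg\{1,\gamma|\Eg|\}\le 1$ thanks to \cref{eq:LamgJc}, while in the second term $|\alpha_\gamma|\le 1$ on $\mathcal{A}_\gamma(\Eg)\cup\mathcal{S}_\gamma(\Eg)$ by \cref{eq:SgammaEst}, $|\Lamg(\Eg)|\le j_c$, and the same inequality $\gamma|\Eg|\le \maxg\{1,\gamma|\Eg|\}$ kills the remaining $\gamma$. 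A direct computation then yields
\begin{equation*}
|\bxig(\Eg)\Pg\cdot(D\VV^\transp\Eg)|\le 2j_c|\Pg|\,|D\VV| \quad \text{a.e. in } \om,
\end{equation*}
so that the corresponding integral over $\om$ is bounded by $2j_c\,\norm{D\VV}{L^\infty(B)}\,\Lnorm{\Pg}$.

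The remaining summands of $S_1^\gamma$ and $\bS_0^\gamma$ are products of $\Eg,\Pg,\Lamg(\Eg)$ and of the fixed data $\kappa,\E_d,\nu,\varepsilon,\f$ together with their first derivatives, all of which are in $L^\infty(B)$ thanks to \cref{assump:MaterialAndData}. Each is controlled in $L^1(B)$ by Cauchy--Schwarz and the energy bounds on $\Eg$ and $\Pg$ from the first two steps. Collecting all contributions and factoring $\norm{\VV}{\bm{\mathcal{C}}^{0,1}(B)}$ from both the $D\VV$-contractions with $S_1^\gamma$ and the $\VV$-contractions with $\bS_0^\gamma$ gives \cref{eq:StabilityShapeDeriv}. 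The only delicate elements of the argument are the semidefiniteness of $\bxig$ used in the adjoint estimate and the pointwise compensation of the $\gamma$-factor described above.
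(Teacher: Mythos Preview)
Your proposal is correct and follows essentially the same route as the paper: uniform $\Hcurl$-bounds on $\Eg$ and $\Pg$ (the latter via nonnegativity of $\int_\om \bxig(\Eg)\Pg\cdot\Pg\,dx$), then term-by-term estimation of $S_1^\gamma$ and $\bS_0^\gamma$, with the same pointwise cancellation $\gamma|\Eg|/\maxg\{1,\gamma|\Eg|\}\le 1$ handling the critical term $\Eg\otimes\bxig(\Eg)\Pg$. Your justification of the semidefiniteness of $\bxig(\e)$ via the pointwise monotonicity of $\Lamg$ and \cref{eq:LamgBxig} is a clean shortcut compared to the paper, which instead specializes the computations \cref{eq:ToEstimate}--\cref{eq:CoercM3} to $t=s=\tau=0$; likewise your $\Eg$-bound (using $\Lamg(\Eg)\cdot\Eg\ge 0$) is slightly sharper than the paper's, which uses $|\Lamg(\Eg)|\le j_c$ instead.
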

\begin{proof}
First of all, the distributed shape derivative from \cref{eq:ShapeDerivative} yields the estimate
\begin{equation}\label{eq:StabilityShapeDeriv0}
 |dJ_\gamma(\om)(\VV)| \leq \big(\|S_1^\gamma\|_{L^1(B, \R^{3\times3})} + \|\bS_0^\gamma\|_{{\bm L}^1(B)}\big) \norm{\VV}{\bm{\mathcal{C}}^{0,1}(B)}.
\end{equation}
In order to derive upper bounds for $\|S_1^\gamma\|_{L^1(B, \R^{3\times3})}$ and $ \|\bS_0^\gamma\|_{{\bm L}^1(B)}$, we begin by proving that the families $\{\Eg\}_{\gamma>0}$ and $\{\Pg\}_{\gamma>0}$ 
are uniformly bounded in $\Hzcurl$. In view of \cref{eq:BoundednessEgt}, we have
\begin{equation}\label{eq:BoundEG}
 \Hcurlnorm{\Eg} \leq \min(\underline{\nu},\underline{\epsilon})^{-1} (\Lnorm{\f} + j_c) = C_\E.
\end{equation}
Moreover, we set $t, s = 0$ in \cref{eq:M3Deriv}, which yields
\begin{equation}\label{eq:deM3pos}
\int_\om \partial_\e \M_3(0,\ME(0))(\Pg)\cdot\Pg \,dx = \int_\om \bxig(\Eg)\Pg\cdot\Pg \,dx \geq 0.
\end{equation}
In fact, the non-negativity of \cref{eq:deM3pos} follows by similar calculations as \cref{eq:ToEstimate,eq:EstPositivePart,eq:EstAgammaPart,eq:EstSgammaPart,eq:CoercM3} in the special case $t,s,\tau =0$.
As $\Pg$ is the unique solution to \cref{eq:AdjointEquation}, inserting $\hat\e = \Pg$ implies with \ref{assump:Material}
\begin{multline*}
 \min(\underline{\epsilon},\underline{\nu}) \Hcurlnorm{\Pg}^2 \leq a(\Pg,\Pg) \\
 = -\int_{B} \kappa(\Eg - \E_d)\cdot \Pg\,dx - \int_\om \bxig(\Eg)\Pg\cdot\Pg \,dx.
 \end{multline*}
Hence, we obtain a uniform bound for $\Pg$ by means of
\cref{eq:BoundEG,eq:deM3pos}, i.e.,
\begin{equation}\label{eq:BoundPG}
 \quad \Hcurlnorm{\Pg} \leq  \norm{\kappa}{\mathcal{C}(\Omega)}\min(\underline{\epsilon},\underline{\nu})^{-1} \big(C_\E + \LnormB{\E_d}\big) = C_\P.
\end{equation}
With \cref{eq:BoundEG,eq:BoundPG} we may now estimate both terms in \cref{eq:StabilityShapeDeriv0} separately. 
Therefore, let us introduce the notation (see \cref{thm:ShapeDerivative})
\begin{equation}
 S^\gamma_1 \coloneqq \sum_{i=1}^{14} \Theta_i.
\end{equation}
where $\Theta_i \in L^1(B, \R^{3\times3})$ for every $i \in \{1,\dots,14\}$. Now, \cref{assump:MaterialAndData}, \cref{eq:BoundEG,eq:BoundPG} together with H{\" o}lder's and Young's inequalities yield
\begin{align}\label{eq:Theta1-6}
 &\sum_{i=1}^6\norm{ \Theta_i}{L^1(B,\R^{3\times3})} \\\notag
\leq\quad\;\; &\, \int_{B}  \frac{|\kappa|}{2}|\Eg - \E_d|^2 + \chi_\om \, dx + \int_B |\nu\curl\Eg\cdot\curl\Pg| + |\epsilon\Eg\cdot\Pg|\, dx \\\notag  
    &  + \int_\omega |\Lamg(\Eg)\cdot\Pg|\,dx  + \int_B|\f\cdot\Pg| \, dx\\\notag
 \overbrace{\leq}^{\cref{eq:BoundEG}\&\cref{eq:BoundPG}} &\, \norm{\kappa}{\mathcal{C}(B)}\big(C_\E^2 + \LnormB{\E_d}^2\big) + |\omega| + \big(\norm{\nu}{\mathcal C(B,\R^{3\times3})} + \norm{\epsilon}{\mathcal C(B,\R^{3\times3})}\big)C_\E C_\P \\\notag 
    &+ (j_c\sqrt{|\omega|} + \LnormB{\f})C_\P
\end{align}
For the remaining terms, we   use again \cref{assump:MaterialAndData}, \cref{eq:BoundEG,eq:BoundPG} as well as the identity $| \bm x \otimes \bm y| = |\bm x| \cdot|\bm y|$ for all  $\bm x,\bm y \in \R^3$ to infer 
\begin{multline}\label{eq:Theta7-13}
 \sum_{i=7}^{13} \norm{ \Theta_i}{L^1(B,\R^{3\times3})}  \leq \frac{1}{2}\norm{\kappa}{\mathcal C(B)}\big(3C_\E^2 + \LnormB{\E_d}^2\big) \\
    + 2(\norm{\nu}{\mathcal C(B,\R^{3\times3})} + \norm{\epsilon}{\mathcal C(B,\R^{3\times3})})C_\E C_\P + (\norm{\f}{\bm L^2(B)} + j_c \sqrt{|\omega|}) C_\P,
\end{multline}
where we have also used Young's inequality to obtain the first term in \cref{eq:Theta7-13}.
Moreover, we may estimate the last summand of $S_1^\gamma$ as follows
\begin{align}\label{eq:Theta14}
 &\norm{\Theta_{14}}{L^1(\Omega, \R^{3\times3})} = \norm{\Eg\otimes\bxig(\Eg)\Pg}{L^1(\Omega, \R^{3\times3})} \leq \int_\omega |\bxig(\Eg)\Pg|\cdot |\Eg|\,dx \\\notag
 \overbrace{\leq}^{\cref{eq:SgammaEst}\&\cref{eq:DefiBxig} } &\int_\om \left(\frac{j_c\gamma|\Pg|}{\maxg\{1, \gamma|\Eg|\}} + \frac{\gamma |\Eg \otimes \Lamg(\Eg)| \cdot |\Pg|}{\maxg\{1,\gamma|\Eg|\}|\Eg|} \right)|\Eg|\,dx \\\notag
 \overbrace{\leq}^{\cref{eq:LamgJc}} \quad &\int_\om 2j_c|\Pg|\,dx \leq 2j_c \sqrt{|\om|}C_\P.
\end{align}
Gathering \cref{eq:Theta1-6,eq:Theta7-13,eq:Theta14}  we deduce the final estimate for $S^\gamma_1$
\begin{multline}\label{eq:S1Est}
 \norm{S^\gamma_1}{L^1(B,\R^{3\times3})} \leq \frac{1}{2}\norm{\kappa}{\mathcal C(B)}\big(5 C_\E^2 + 3\LnormB{\E_d}^2\big)  + |\omega|  \\
 +3\big( \norm{\nu}{\mathcal C(B,\R^{3\times3})} + \norm{\epsilon}{\mathcal C(B,\R^{3\times3})} \big) C_\E C_\P + \big(2\LnormB{\f} + 4j_c\sqrt{|\om|}\big) C_\P.
\end{multline}
Again, \cref{eq:BoundEG,eq:BoundPG} with H{\"o}lder's and Young's inequalities imply for $\bS^\gamma_0$
\begin{align}\notag
 \norm{\bS^\gamma_0}{\bm L^1(B)} \leq &\, \int_B \frac{1}{2} |\nabla \kappa| \cdot|\Eg - \E_d|^2 + |\kappa D\E_d^\transp (\Eg - \E_d)| \,dx \\\notag
    &+ \int_B |D\nu^\transp\curl\Eg|\cdot |\curl\Pg| + |D\epsilon^\transp\Eg|\cdot|\Pg| + |D\f^\transp \Pg|\,dx\\\notag
 \leq &\,\frac{1}{2}\norm{\kappa}{\mathcal C^1(B)}\big(3C_\E^2 + 5\norm{\E_d}{\bm H^1(B)}^2\big) \\ \label{eq:S2Est}
    &+ \big(\ConematnormB{\nu} + \ConematnormB{\epsilon}\big)C_\P C_\E + \norm{\f}{\bm H^1(B)} C_\P.
\end{align}
Finally, we combine \cref{eq:StabilityShapeDeriv0,eq:S1Est,eq:S2Est} to conclude
\begin{multline*}
 |dJ_\gamma(\om)(\VV)| \leq \!\bigg[4\norm{\kappa}{\mathcal C^1(B)}\big(C_\E^2 + \norm{\E_d}{\bm H^1(B)}^2\big)+ 4\big(\ConematnormB{\nu} + \ConematnormB{\epsilon}\big)C_\E C_\P \\ 
   +|\omega|  + \big(3 \norm{\f}{\bm H^1(B)} + 4j_c\sqrt{|\om|}\big)C_\P\bigg] \norm{\VV}{\bm{\mathcal{C}}^{0,1}(B)}
\end{multline*}
Hence, the proof is finished.

\end{proof}

\subsection{Convergence of the regularized shape optimization problem}
Our aim is to prove the strong convergence of \cref{eq:Preg} towards \cref{eq:P}. 
For this purpose, we recall a helpful result which states the strong convergence of the solution to   \cref{eq:RegVI} for a fixed $\om\in\mathcal{O}$.
A proof can be found in \cite[Corollary 4.3]{DeLosReyes11}:
\begin{lemma}\label{lemma:ConvFixedOmega}
 Let \cref{assump:MaterialAndData} be satisfied and $\omega \in \mathcal{O}$. Moreover, for every $\gamma >0$, let $(\Eg, \lamg) \in \Hzcurl \times  {\bm L}^\infty(\om)$ denote the solution to \cref{eq:RegVI}.  Then,  \begin{equation}\label{eq:StrongConvEgFixedOmega}
 \begin{aligned}
  (\Eg, \lamg) &\to (\E,\lam) &&\text{ strongly in } \Hzcurl \times \Hzcurl^* \text{ as } \gamma \to \infty.
 \end{aligned}
 \end{equation}
where $(\E, \lam) \in \Hzcurl \times  {\bm L}^\infty(\om)$ is the unique solution to    \cref{eq:LagrangeVI}. 
\end{lemma}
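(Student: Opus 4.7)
The plan is to follow the classical strategy for Moreau–Yosida type regularizations: uniform a priori bounds, extraction of weak limits, identification of those limits with the solution of \cref{eq:LagrangeVI} via the complementarity conditions, and finally an upgrade to strong convergence using the coercivity of $a$. First, testing \cref{eq:RegVI} with $\vb = \Eg$ and noticing that $\lamg\cdot\Eg = j_c\gamma|\Eg|^2/\maxg\{1,\gamma|\Eg|\}\ge 0$ gives, together with \ref{assump:Material}, the uniform bound $\Hcurlnorm{\Eg}\le \min(\underline{\nu},\underline{\varepsilon})^{-1}\Lnorm{\f}$, while \cref{eq:LamgJc2} already yields $\norm{\lamg}{{\bm L}^\infty(\om)}\le j_c$. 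Consequently, along a subsequence (not relabeled), $\Eg\rightharpoonup\tilde\E$ in $\Hzcurl$ and $\lamg\rightharpoonup^*\tilde\lam$ in ${\bm L}^\infty(\om)$. Passing to the limit in the linear identity of \cref{eq:RegVI} is immediate and gives $a(\tilde\E,\vb)+\int_\om\tilde\lam\cdot\vb\,dx = \int_\Omega\f\cdot\vb\,dx$ for every $\vb\in\Hzcurl$, and weak-$*$ convergence preserves $|\tilde\lam|\le j_c$ a.e.\ on $\om$.

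The delicate step, which I expect to be the main obstacle, is to recover the complementarity relation $\tilde\lam\cdot\tilde\E = j_c|\tilde\E|$ almost everywhere on $\om$. The idea is to estimate the defect
\begin{equation*}
 D_\gamma := j_c|\Eg| - \lamg\cdot\Eg = \frac{j_c|\Eg|\bigl(\maxg\{1,\gamma|\Eg|\}-\gamma|\Eg|\bigr)}{\maxg\{1,\gamma|\Eg|\}}\ge 0.
\end{equation*}
A direct case analysis on the three regions defining $\maxg$ in \cref{eq:Maxc} shows that $D_\gamma\equiv 0$ on $\mathcal{A}_\gamma(\Eg)$, while on $\mathcal{S}_\gamma(\Eg)$ and on its complement one has $|\Eg|\lesssim 1/\gamma$, which yields $\|D_\gamma\|_{L^1(\om)}=O(1/\gamma)$. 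Hence $\int_\om\lamg\cdot\Eg\,dx = j_c\int_\om|\Eg|\,dx + o(1)$. Testing the penalized linear equation with $\Eg$ and taking $\liminf$ (using the weak lower semicontinuity of $a(\cdot,\cdot)$ and of $\vb\mapsto\int_\om|\vb|\,dx$ as well as the weak $\L$-convergence $\int\f\cdot\Eg\to\int\f\cdot\tilde\E$) leads, after combining with the linear identity satisfied by $(\tilde\E,\tilde\lam)$, to
\begin{equation*}
 \int_\om\tilde\lam\cdot\tilde\E\,dx \ge j_c\int_\om|\tilde\E|\,dx.
\end{equation*}
Since $\tilde\lam\cdot\tilde\E\le|\tilde\lam||\tilde\E|\le j_c|\tilde\E|$ pointwise, the reverse inequality is automatic, so equality holds almost everywhere on $\om$. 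This identifies $(\tilde\E,\tilde\lam)$ as the unique solution of \cref{eq:LagrangeVI}, and by a standard subsequence argument the whole family $(\Eg,\lamg)$ converges weakly.

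Strong convergence of $\Eg$ in $\Hzcurl$ then follows by coercivity: the a priori bound just established, together with $\int_\om\lamg\cdot\Eg\,dx = j_c\int_\om|\Eg|\,dx + o(1)$ and $\limsup\int_\om\lamg\cdot\Eg\le\int_\om\lam\cdot\E$, forces $\int_\om\lamg\cdot\Eg\to j_c\int_\om|\E|\,dx$. Hence $a(\Eg,\Eg)=\int_\Omega\f\cdot\Eg\,dx-\int_\om\lamg\cdot\Eg\,dx$ converges to $a(\E,\E)$, and combined with the weak convergence of $\Eg$ one obtains $a(\Eg-\E,\Eg-\E)\to 0$, i.e.\ $\Eg\to\E$ in $\Hzcurl$. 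Finally, to obtain strong convergence of $\lamg\to\lam$ in $\Hzcurl^*$, subtract \cref{eq:LagrangeVI} from \cref{eq:RegVI} to get $\langle\chi_\om\lamg-\chi_\om\lam,\vb\rangle = -a(\Eg-\E,\vb)$ for every $\vb\in\Hzcurl$, which yields $\|\chi_\om\lamg-\chi_\om\lam\|_{\Hzcurl^*}\lesssim\Hcurlnorm{\Eg-\E}\to 0$, completing the proof.
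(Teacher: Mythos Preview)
Your argument is correct. The paper does not supply its own proof of this lemma but simply cites \cite[Corollary~4.3]{DeLosReyes11}, so there is no in-paper proof to compare against; your Moreau--Yosida strategy (uniform bounds, weak limits, identification of the limit via the complementarity defect $D_\gamma=O(1/\gamma)$, and upgrade to strong convergence via coercivity) is the standard route and is carried out cleanly. The only step worth making more explicit is the inequality $\limsup_{\gamma}\int_\om\lamg\cdot\Eg\,dx\le\int_\om\lam\cdot\E\,dx$: it follows directly from $\int_\om\lamg\cdot\Eg\,dx=\int_\Omega\f\cdot\Eg\,dx-a(\Eg,\Eg)$, the weak convergence $\int_\Omega\f\cdot\Eg\,dx\to\int_\Omega\f\cdot\E\,dx$, and the weak lower semicontinuity $\liminf_{\gamma} a(\Eg,\Eg)\ge a(\E,\E)$.
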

Let us point out  that in \cref{eq:StrongConvEgFixedOmega} we extended the Lagrange multipliers  $\lamg,\lam$ by zero as functions in $\L$, i.e., we set  $\lamg(x) = 0$ and $\lam(x)=0$ for all $x\in \O\backslash \om$. This zero extension shall  also be   used in the following theorem.
\begin{theorem}\label{thm:ConvergenceEgLamg}
Let \cref{assump:MaterialAndData} hold and $\{\gamma_n\}_{n\in\N}\subset \R^+$ be such that 
$ \gamma_n \to \infty$ as $n\to\infty$.
Then, there exists a subsequence of $\{\gamma_n\}_{n\in\N}$, still denoted by $\{\gamma_n\}_{n\in\N}$, such that the sequence of solutions $\{\om^{\gamma_n}\}_{n\in\N}$ of \cref{eq:Preg} with $\gamma = \gamma_n$ converges towards an optimal solution $\oms\subset\mathcal{O}$ of \cref{eq:P} in the sense of Hausdorff and in the sense of characteristic functions. 

Moreover, $\{(\E^{\gamma_n}(\om^{\gamma_n}), \lam^{\gamma_n}(\om^{\gamma_n}))\}_{n \in\N}$ and $(\E(\omt),\lam(\omt))$ as the solutions of \cref{eq:RegVI} for $\om=\om^{\gamma_n}$ and \cref{eq:LagrangeVI} for $\om=\omt$, respectively, satisfy
 \begin{align}
\label{eq:StrongConvEg}  &\lim_{\gamma \to \infty} \Hcurlnorm{\E^{\gamma_n}(\om^{\gamma_n}) - \E(\omt)} = 0, \\ 
\label{eq:StrongConvLamg}    &\lim_{\gamma \to \infty} \norm{\lam^{\gamma_n}(\om^{\gamma_n}) - \lam(\omt)}{\Hzcurl^*} = 0,
 \end{align}
where $\lam^{\gamma_n}(\om^{\gamma_n})$ \emph{(}resp. $ \lam(\omt)$ \emph{)} is extended by zero in $\O \setminus \om^{\gamma_n}$ \emph{(}resp. in  $\O \setminus \omt$\emph{)}.
\end{theorem}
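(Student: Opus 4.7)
The plan is to combine the compactness of $\mathcal{O}$ (\cref{thm:lipCV}), the fixed-domain convergence as $\gamma \to \infty$ (\cref{lemma:ConvFixedOmega}), and the uniform-in-$\gamma$ Lipschitz dependence of the regularized state on the characteristic function of $\omega$ that was established in \cref{eq:EstimateEgammaEngamma}. The argument splits into four steps.

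\textbf{Step 1 (Extraction of a candidate limit).} By \cref{thm:lipCV}, since $\{\om^{\gamma_n}\}_{n\in\N}\subset\mathcal{O}$, we can extract a subsequence (not relabeled) such that $\om^{\gamma_n}\to\omt$ in the Hausdorff sense and $\chi_{\om^{\gamma_n}}\to\chi_{\omt}$ in $L^p(\Omega)$ for all $p<\infty$, for some $\omt\in\mathcal{O}$.

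\textbf{Step 2 (Strong convergence of states).} I would write
\begin{equation*}
 \E^{\gamma_n}(\om^{\gamma_n})-\E(\omt) = \bigl[\E^{\gamma_n}(\om^{\gamma_n})-\E^{\gamma_n}(\omt)\bigr] + \bigl[\E^{\gamma_n}(\omt)-\E(\omt)\bigr].
\end{equation*}
The second bracket converges to $0$ in $\Hzcurl$ by \cref{lemma:ConvFixedOmega}. For the first bracket, I would redo the computation of \cref{eq:CalcConvergenceEngamma}--\cref{eq:EstimateEgammaEngamma} with the two domains $\omt$ and $\om^{\gamma_n}$ in place of $\omt$ and $\om_n^\gamma$, and with the same regularization parameter $\gamma_n$. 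The crucial point is that the monotonicity inequality \cref{eq:MonotoneLam} and the uniform $L^\infty$-bound $\|\lamg(\cdot)\|_{\bm L^\infty(\Omega)}\leq j_c$ from \cref{eq:LamgJc2} give a Lipschitz estimate with constant $j_c/\min(\underline{\nu},\underline{\epsilon})$, \emph{independent} of $\gamma_n$. Combined with $\chi_{\om^{\gamma_n}}\to\chi_{\omt}$ in $L^2(\Omega)$, this forces the first bracket to $0$ as well, yielding \cref{eq:StrongConvEg}.

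\textbf{Step 3 (Convergence of multipliers).} Subtracting \cref{eq:LagrangeVI} for $\omt$ from \cref{eq:RegVI} for $\om=\om^{\gamma_n}$, I obtain
\begin{equation*}
\langle \lam^{\gamma_n}(\om^{\gamma_n})-\lam(\omt),\vb\rangle = -a\bigl(\E^{\gamma_n}(\om^{\gamma_n})-\E(\omt),\vb\bigr)\quad\forall\vb\in\Hzcurl,
\end{equation*}
using the zero extensions. Continuity of $a$ together with \cref{eq:StrongConvEg} immediately gives \cref{eq:StrongConvLamg} by taking the supremum over $\vb$ in the unit ball of $\Hzcurl$.

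\textbf{Step 4 (Optimality of $\omt$).} For any fixed $\om\in\mathcal{O}$, the definition of $\om^{\gamma_n}$ as a minimizer of \cref{eq:Preg} gives $J_{\gamma_n}(\om^{\gamma_n})\leq J_{\gamma_n}(\om)$. On the right-hand side, \cref{lemma:ConvFixedOmega} for the fixed domain $\om$ yields $\E^{\gamma_n}(\om)\to\E(\om)$ strongly in $\Hzcurl$, hence $J_{\gamma_n}(\om)\to J(\om)$. On the left-hand side, Step 2 gives $\E^{\gamma_n}(\om^{\gamma_n})\to\E(\omt)$ strongly in $\Hzcurl$, hence $J_{\gamma_n}(\om^{\gamma_n})\to J(\omt)$ (the volume term passes to the limit because $\chi_{\om^{\gamma_n}}\to\chi_{\omt}$ in $L^1(\Omega)$). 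Therefore $J(\omt)\leq J(\om)$ for all $\om\in\mathcal{O}$, proving that $\omt$ is optimal for \cref{eq:P}. This also establishes \cref{thm:ExistenceP} as a byproduct.

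\textbf{Main obstacle.} The subtle point is the simultaneous passage to the limit in the two parameters $\omega$ and $\gamma$: neither \cref{lemma:ConvFixedOmega} alone (which is stated for fixed $\omega$) nor the continuity argument in the proof of \cref{thm:WellPosednessPreg} (stated for fixed $\gamma$) suffices. The resolution is the triangle-inequality splitting in Step 2, which is only effective because the Lipschitz constant in \cref{eq:EstimateEgammaEngamma} is \emph{independent} of the regularization parameter, a property guaranteed by the uniform bound $\|\Lamg(\cdot)\|_{\bm L^\infty(\Omega)}\leq j_c$ from \cref{eq:LamgJc2}.
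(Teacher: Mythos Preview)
Your proof is correct and follows essentially the same approach as the paper: the same compactness extraction, the same triangle-inequality splitting $\E^{\gamma_n}(\om^{\gamma_n})-\E(\omt) = [\E^{\gamma_n}(\om^{\gamma_n})-\E^{\gamma_n}(\omt)] + [\E^{\gamma_n}(\omt)-\E(\omt)]$ handled via the $\gamma$-independent Lipschitz estimate and \cref{lemma:ConvFixedOmega}, the same subtraction argument for the multipliers, and the same limit passage in the minimization inequality. Your identification of the main obstacle and its resolution also matches the paper's reasoning exactly.
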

\begin{proof}
Thanks to \cref{thm:lipCV} and $\gamma_n \to \infty$, there exists $\oms \in \mathcal{O}$ such that, possibly for a subsequence,
 \begin{equation}\label{eq:ConvOmg}
  \om^{\gamma_n} \to \omt \quad \text{as } n \to \infty
 \end{equation}
 in the sense of Hausdorff and in the sense of characteristic functions.
Furthermore, we have the estimate 
\begin{multline}\label{eq:SplitConv}
 \Hcurlnorm{\E^{\gamma_n}(\om^{\gamma_n}) - \E(\omt)} \leq \Hcurlnorm{\E^{\gamma_n}(\om^{\gamma_n}) - \E^{\gamma_n}(\omt)}\\
 + \Hcurlnorm{\E^{\gamma_n}(\omt) - \E(\omt)}.
\end{multline}
Now, by  virtue of \cref{lemma:ConvFixedOmega}, the second term on the right-hand side of \cref{eq:SplitConv} converges to $0$ as $n \to \infty$. For the first term we observe (for every $n \in\N$) that the arguments used to derive \cref{eq:EstimateEgammaEngamma} are applicable. Thus, we substract \cref{eq:RegVI} for $\E^{\gamma_n}(\om^{\gamma_n})$ and \cref{eq:RegVI} for $\E^{\gamma_n}(\omt)$ and test the resulting equation with $\vb = \E^{\gamma_n}(\omt) - \E^{\gamma_n}(\om^{\gamma_n})$. Hereafter, analoguously to  \cref{eq:CalcConvergenceEngamma}, calculations involving \cref{eq:MonotoneLam} yield
\begin{equation}\label{eq:EstimateEgammaEngamma2}
 \Hcurlnorm{\E^{\gamma_n}(\om^{\gamma_n}) - \E^{\gamma_n}(\omt)} \leq \frac{j_c}{\min\{\underline{\nu}, \underline{\epsilon}\}} \norm{\chi_\omt - \chi_{\om^{\gamma_n}}}{L^2(\Omega)} \quad\forall n\in\N.
\end{equation}
Combining \cref{lemma:ConvFixedOmega} and \cref{eq:ConvOmg,eq:SplitConv,eq:EstimateEgammaEngamma2} together leads to   \cref{eq:StrongConvEg}.

Furthermore, substracting \cref{eq:LagrangeVI} for $\om = \oms$ and \cref{eq:RegVI} for $\om = \om^{\gamma_n}$ implies
\begin{align}
 &\sup_{\vb\in\Hzcurl} \frac{(\lam^{\gamma_n}(\om^{\gamma_n}) - \lam(\omt), \vb)_\L}{\Hcurlnorm{\vb}} =  \sup_{\vb\in\Hzcurl} \frac{a(\E(\omt) - \E^{\gamma_n}(\om^{\gamma_n}), \vb)}{\Hcurlnorm{\vb}} \\\notag
 \overbrace{\leq}^{\ref{assump:Material}}\, &\max\{ \norm{\epsilon}{{L}^\infty(\Omega,\R^{3\times3})}, \norm{\nu}{{L}^\infty(\Omega,\R^{3\times3})}\} \Hcurlnorm{\E(\omt) - \E^{\gamma_n}(\om^{\gamma_n})}.
\end{align}
Thus, \cref{eq:StrongConvLamg} follows from \cref{eq:StrongConvEg}. 
It remains to verify that $\omt \in \mathcal{O}$ is in fact a minimizer of \cref{eq:P}. 
First of all, we note that, since $\om^{\gamma_n}$ is a solution of \cref{eq:Preg} for $\gamma = \gamma_n$, the following estimate holds
\begin{equation}\label{eq:EstimateMinimizer}
J_{\gamma_n}(\om^{\gamma_n}) = \min_{\om \in \mathcal{O}} J_{\gamma_n} (\om) \leq J_{\gamma_n} (\om) \quad \forall \om \in \mathcal{O}.
\end{equation}
Finally, gathering all the previous results, we obtain for every $\om\in\mathcal{O}$ that
\begin{align*}
J(\omt) =\qquad & \frac{1}{2} \int_B \kappa |\E(\omt) - \E_d|^2 dx + \int_\omt dx \\
\overbrace{=}^{\cref{eq:StrongConvEg}\&\cref{eq:ConvOmg}}& \lim_{n \to \infty} \frac{1}{2} \int_B \kappa |\E^{\gamma_n}(\om^{\gamma_n}) - \E_d|^2 dx + \int_{\om^{\gamma_n}} dx = \lim_{n \to \infty} J_{\gamma_n}(\om^{\gamma_n})\\
 {\overbrace{\leq}^{\cref{eq:EstimateMinimizer}}}\quad\,\,&   \lim_{n \to \infty } J_{\gamma_n}(\om) = \lim_{n \to \infty} \frac{1}{2} \int_B \kappa | \E^{\gamma_n}(\om) - \E_d|^2 dx + \int_\om dx \\
 {\overbrace{=}^{\cref{eq:StrongConvEgFixedOmega}}}\quad\,\,&\;  \frac{1}{2} \int_B \kappa |\E(\om) - \E_d|^2 dx + \int_\om dx = J(\om).
\end{align*}
This shows $J(\omt) \leq J(\om)$ for every $\om \in \mathcal{O}$ which yields the assertion.
\end{proof} 
\begin{remark}\label{rem:ProofExistenceP}
 As we have obtained the optimal shape $\oms\in\mathcal{O}$ in \cref{eq:ConvOmg} as the limit of the optimal shapes for \cref{eq:Preg}, \cref{thm:ExistenceP} follows immediately from \cref{thm:ConvergenceEgLamg}.
\end{remark}

\section{Numerical tests} \label{section:NumericalTests}
Our algorithm to obtain a numerical approximation for the optimal shape $\omt$ of \cref{eq:P} is based on a variant of the level set method where the distributed shape derivative (\cref{thm:ShapeDerivative}) is used to obtain a descent direction (see \cite{MR3535238}). 
We refer to \cite{Laurain2018} for a detailed description of this algorithm including its implementation in a 2D framework. 
We consider the proposed approach \cref{eq:Preg} with $\gamma = 7\cdot 10^4$. 
The forward problems \cref{eq:RegVI} are computed using the  Newton method 
with a finite element discretization based on the first family of \Nedelec's edge elements \cite{ned80} at roughly 2.000.000 DoFs. As announced in the introduction, we apply our algorithm to two problems stemming from high-temperature superconductivity (HTS), also widely known as type-II superconductivity.

We choose $\Omega = [-2,3]^3$ and $B = [0,1]^3$. 
For simplicity, we take the material parameters $\epsilon = \nu = \bI_3$ (cf. \ref{assump:Material}). 
Moreover, $\f$ is a circular current
\begin{align*}
\f(x,y,z) = \left\{\begin{aligned}
&\frac{R}{\sqrt{(y - 0.5)^2+(z - 0.5)^2}}\left(0 ,\; -z + 0.5 ,\; y - 0.5\right)&&\text{ for } (x,y,z)\in\Omega_p,\\
&0 &&\text{ for } (x,y,z) \notin \Omega_p,
\end{aligned}\right.
\end{align*} 
applied to a pipe coil $\Omega_p\subset \Omega$ which is defined by
\begin{equation*}
 \Omega_p \coloneqq \left\{ (x,y,z)\in\Omega \,:\, |z - 0.5| \leq 0.5, \, \sqrt{(x - 0.5)^2 + (y - 0.5)^2} \in [1.2,1.6] \right\}.
\end{equation*}
The constant $R>0$ denotes the electrical resistance of $\Omega_p$ (here: $R = 10^{-3}$).  As $\Omega_p \cap B = \emptyset$, we have $\f \equiv 0$ in $B$ and \ref{assump:f} is satisfied. Without a superconductor in the system, this current would induce an orthogonal magnetic field which admits its highest field strength inside the coil.

We use the distributed expression \cref{eq:ShapeDerivative} of the shape derivative to obtain a descent direction ${\bm\Theta}$. More precisely,  let $\V_h \subset \H^1(B) \cap \bm{\mathcal{C}}^{0,1}(\overline B)$ be the space of piecewise linear and continuous finite elements on $B$.
Given a positive definite bilinear form $\mathcal{B}: \V_h \times \V_h \to\R$, the problem is to find ${\bm\Theta}\in  \V_h$ such that    
\begin{equation}
\label{VP_1}
\mathcal{B}({\bm\Theta},{\bm\xi}) =-  dJ_\gamma(\om)({\bm\xi}) \mbox{  for all  } {\bm\xi}\in \V_h.
\end{equation}
With this choice, the solution ${\bm\Theta}$ of \cref{VP_1} is defined on  $B$ and is a descent direction since $dJ_\gamma(\om)({\bm\Theta}) = -\mathcal{B}({\bm\Theta},{\bm\Theta})< 0$ if ${\bm\Theta}\neq 0$. 
In our algorithm we choose 
\begin{equation}
\label{VP_2}
\mathcal{B}({\bm\Theta},{\bm\xi}) = \int_B \alpha_1 D{\bm\Theta} : D{\bm\xi} +  \alpha_2 {\bm\Theta}\cdot {\bm\xi}\,dx +\alpha_3 \int_{\partial B} ({\bm\Theta} \cdot \n) ({\bm\xi} \cdot \n) \,ds,
\end{equation}
with $\alpha_1 = 0.5$, $\alpha_2 = 0.5$ and $\alpha_3=1.0$. 
Moreover, the geometry was optimized in the class of shapes with two symmetries with respect to the planes $x = 0.5$ and $y = 0.5$.
This is achieved by symmetrizing ${\bm\Theta}$  with respect to these axis, and it can be shown that the symmetrized vector field is still a descent direction according to the symmetrization  technique proposed in Section \ref{sec:sym}.

All codes are written in {\scshape Python} with the open-source finite-element computational software {\scshape FEniCS} \cite{fenics:book}. 
We used {\scshape Paraview} to visualize the 3D plots.


 \begin{figure}[tbhp]
  \vspace{-0.1cm}
  \centering
\subfloat{\label{fig:Ex1:Shape_0}\includegraphics[width=0.24\textwidth]{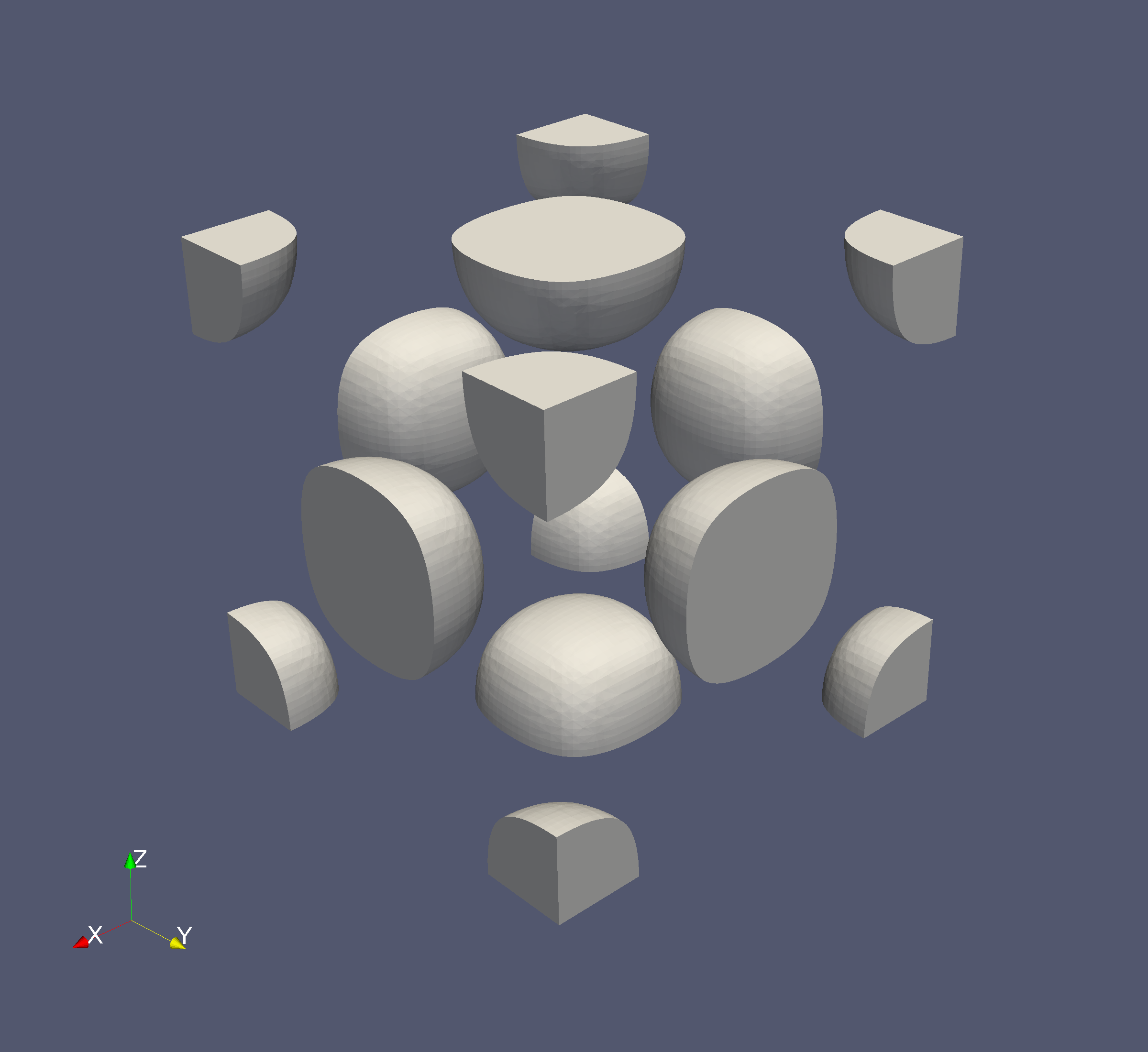}}\,
\subfloat{\label{fig:Ex1:Shape_1}\includegraphics[width=0.24\textwidth]{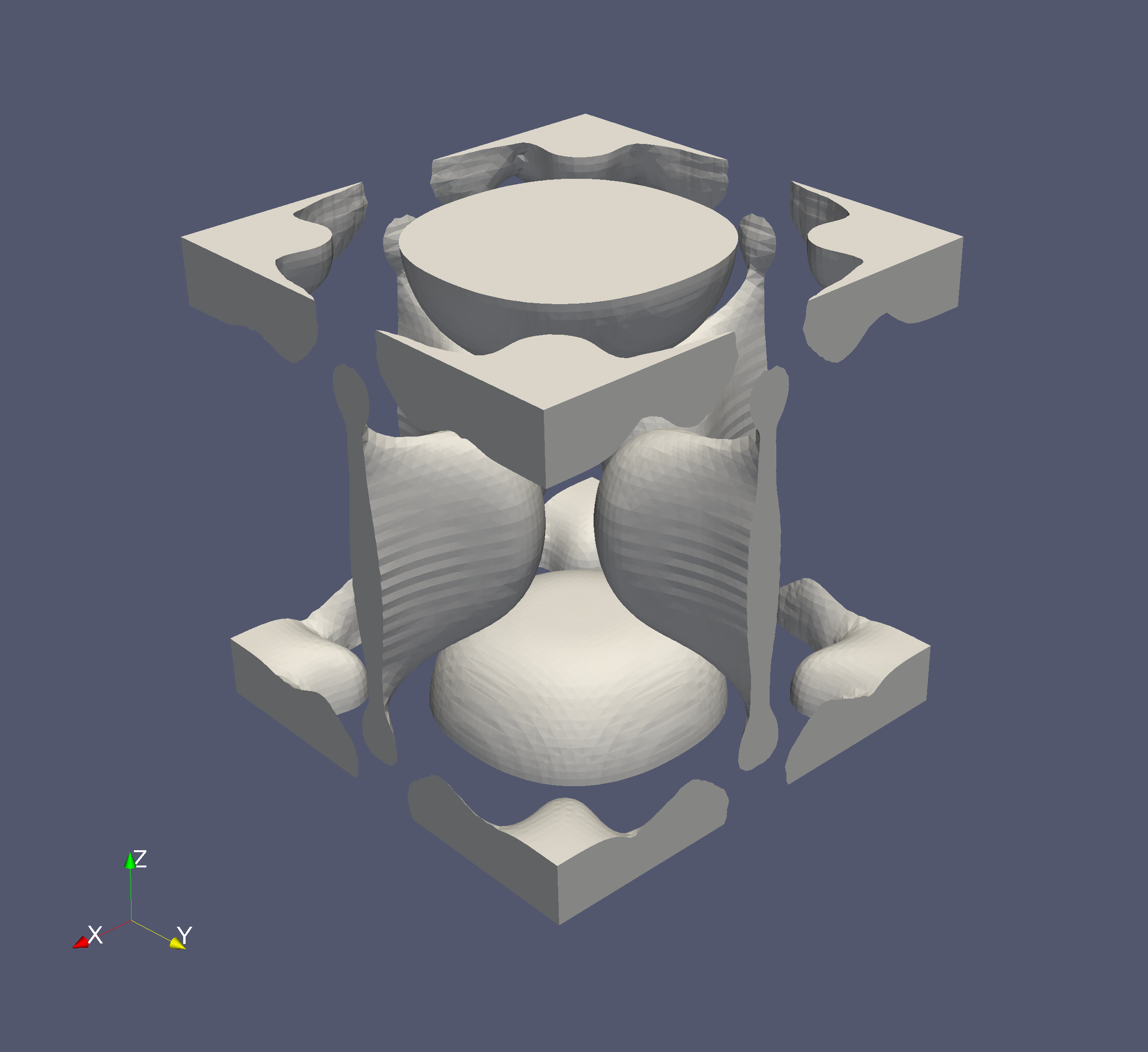}}\,
\subfloat{\label{fig:Ex1:Shape_2}\includegraphics[width=0.24\textwidth]{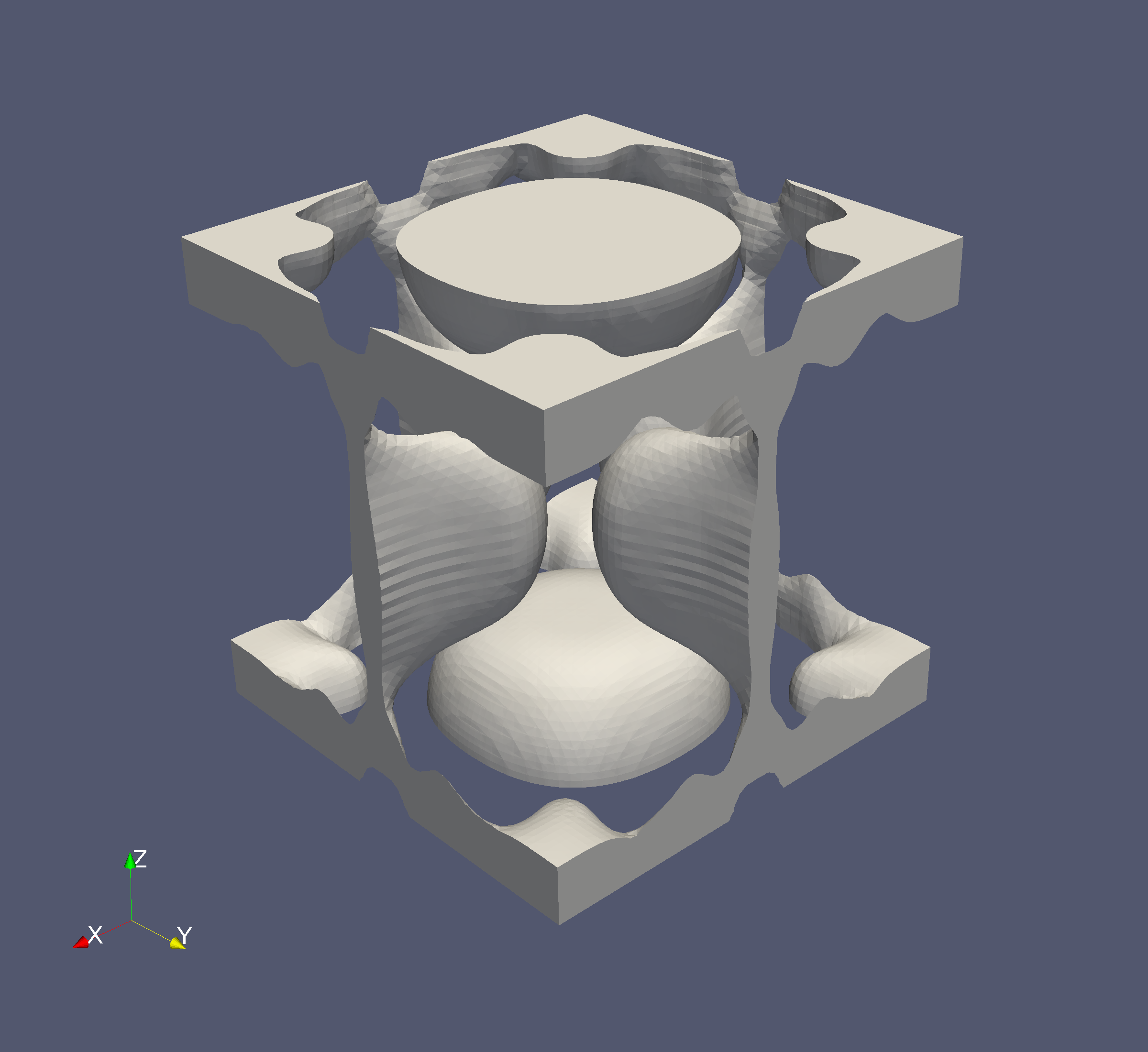}}\, 
\subfloat{\label{fig:Ex1:Shape_3}\includegraphics[width=0.24\textwidth]{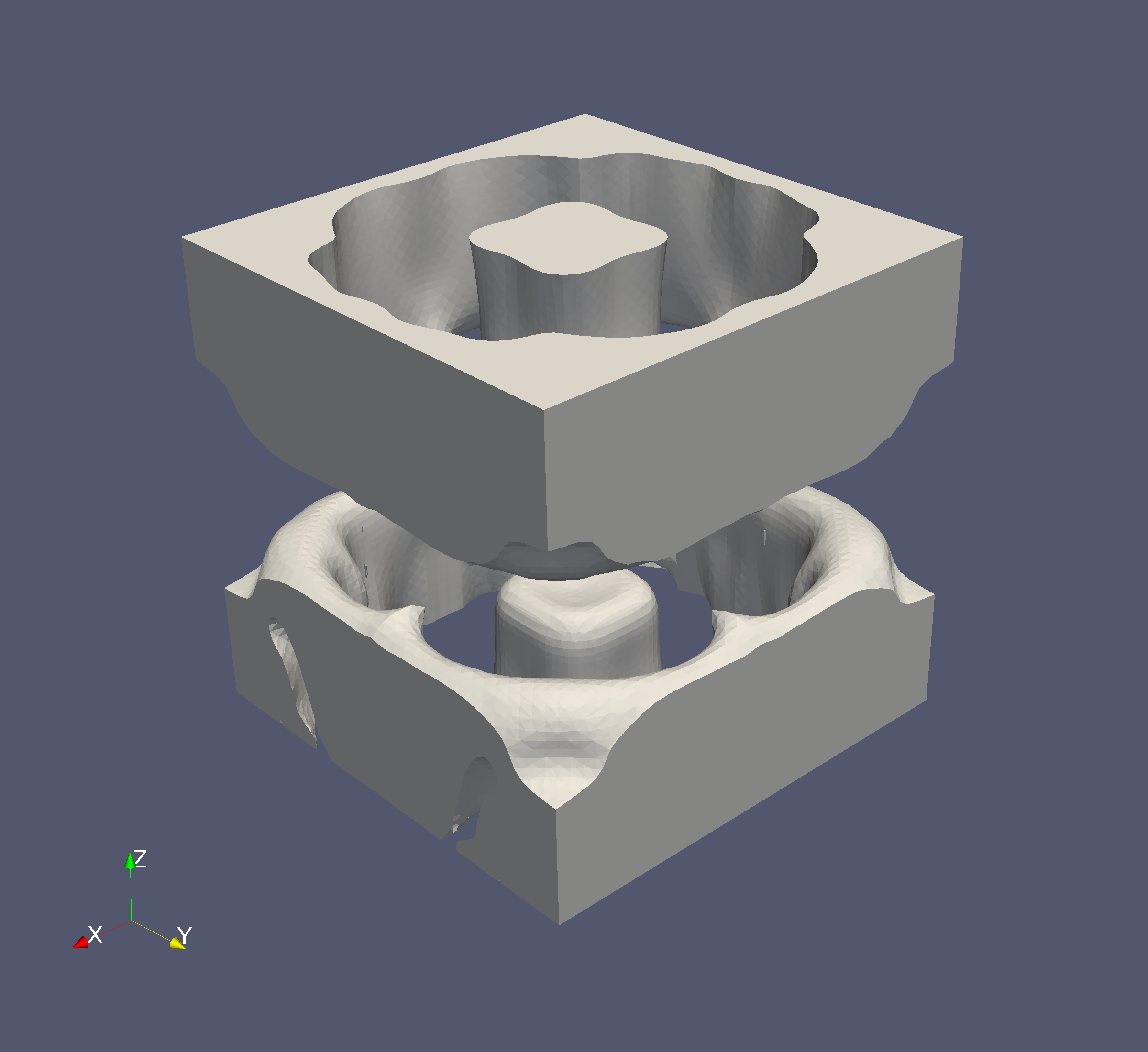}} 
\caption{Shapes generated by the algorithm at iterations $0,42,45,143$.}
\label{fig:Ex1:Shapes} 
\centering
\subfloat{\label{fig:Ex1:Init_Field_1}\includegraphics[width=0.24\textwidth]{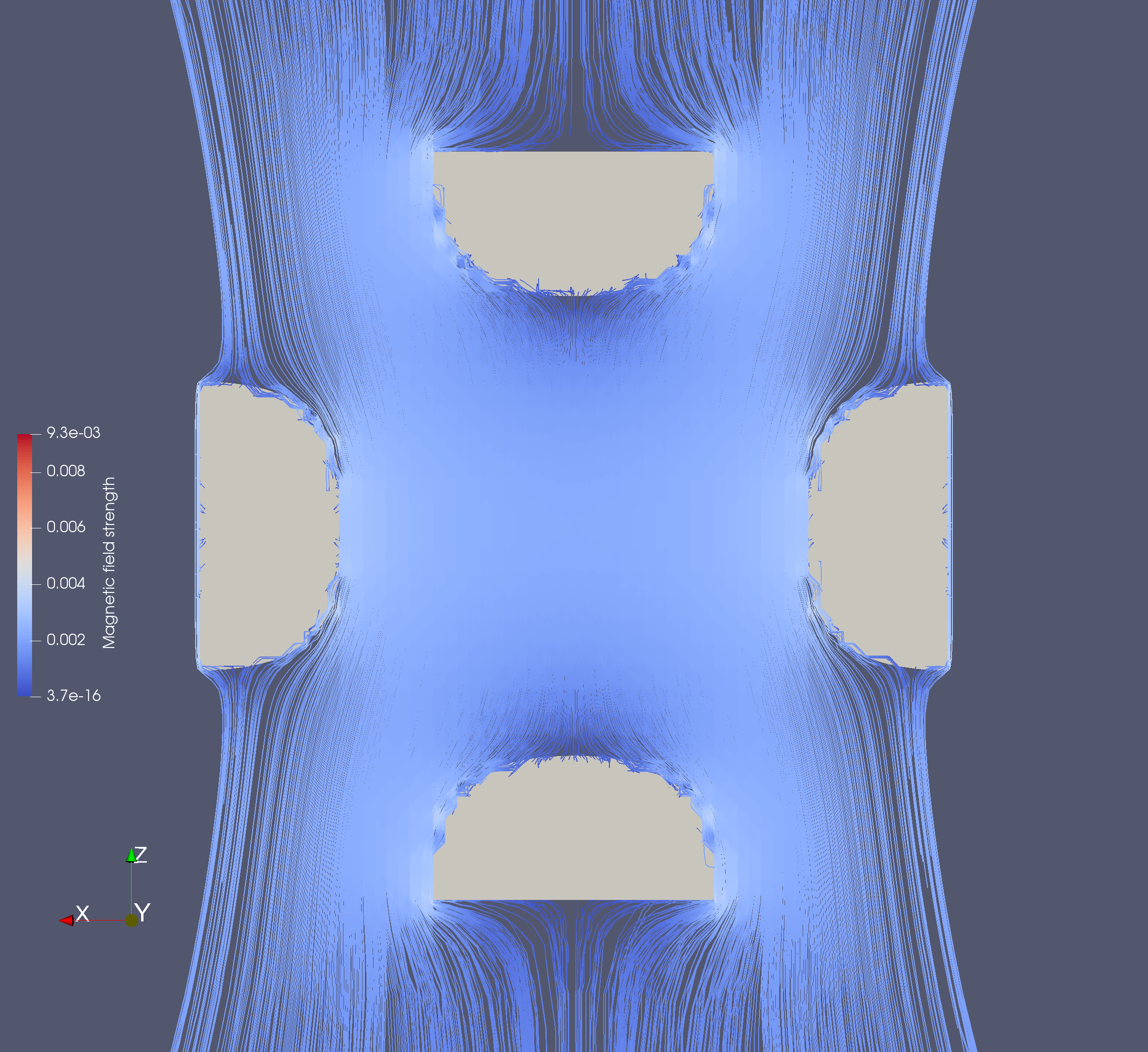}}\, 
\subfloat{\label{fig:Ex1:Field_1}\includegraphics[width=0.24\textwidth]{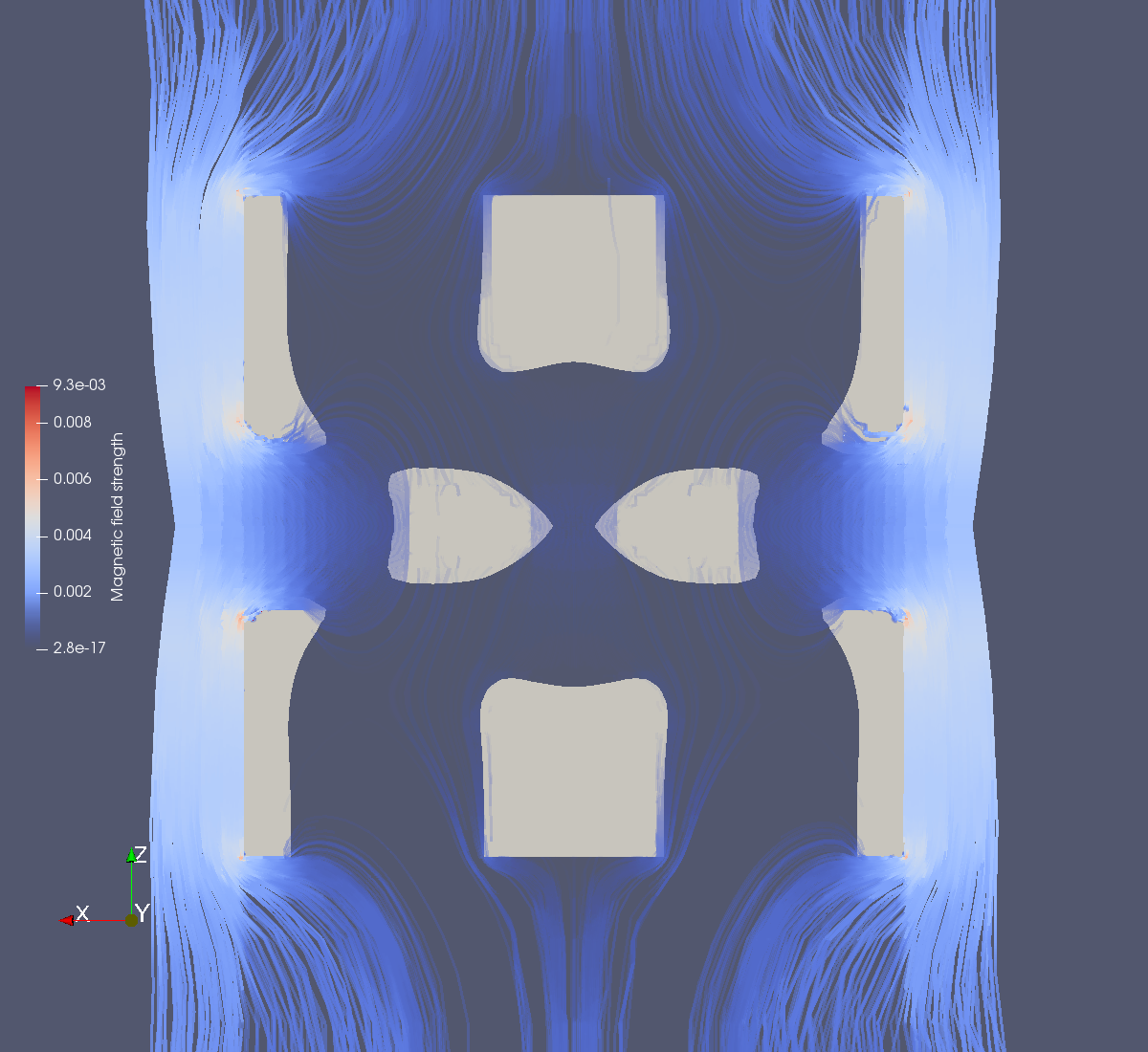}}\,
\subfloat{\label{fig:Ex1:Init_Field_2}\includegraphics[width=0.24\textwidth]{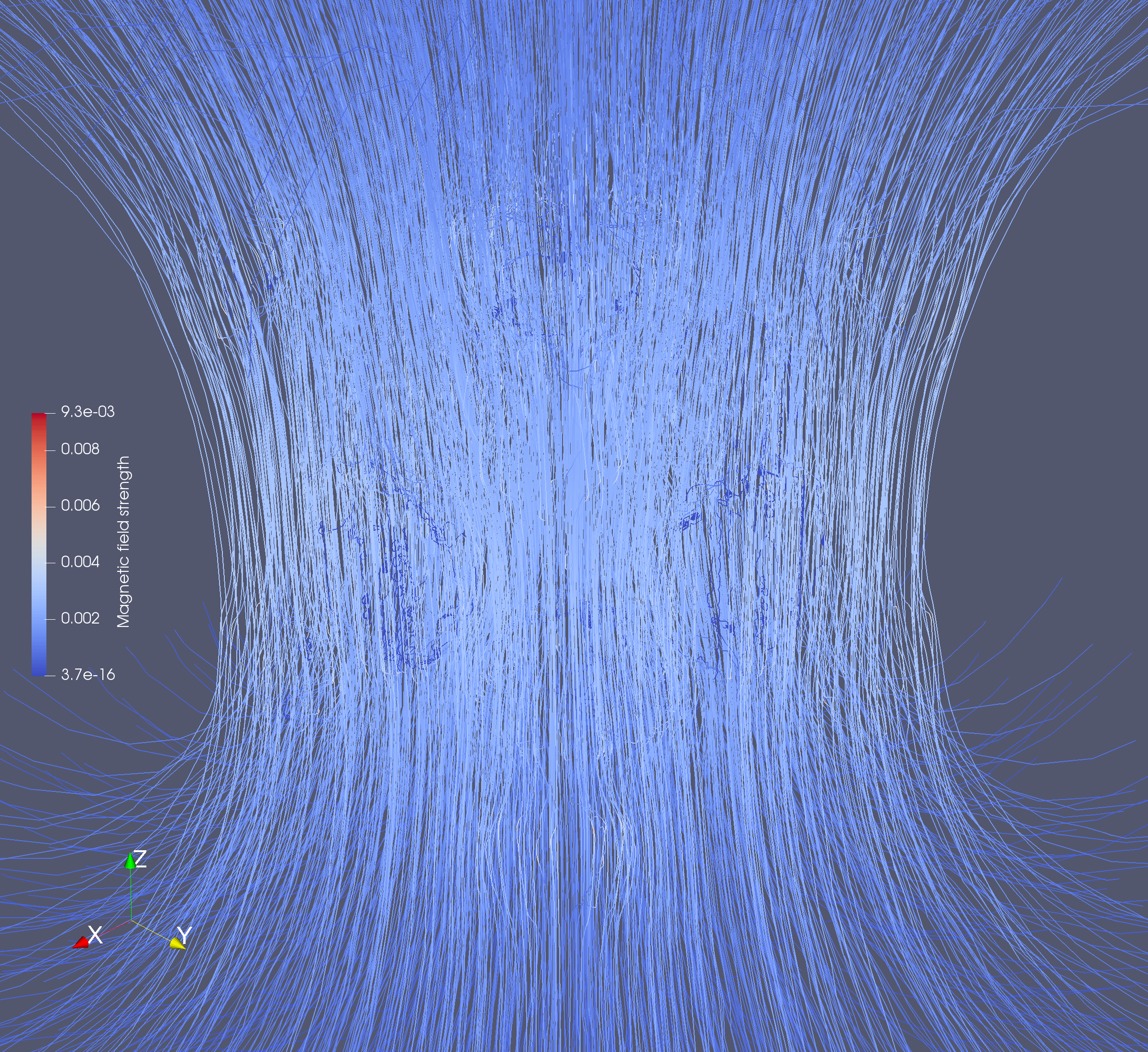}}\,
\subfloat{\label{fig:Ex1:Field_2}\includegraphics[width=0.24\textwidth]{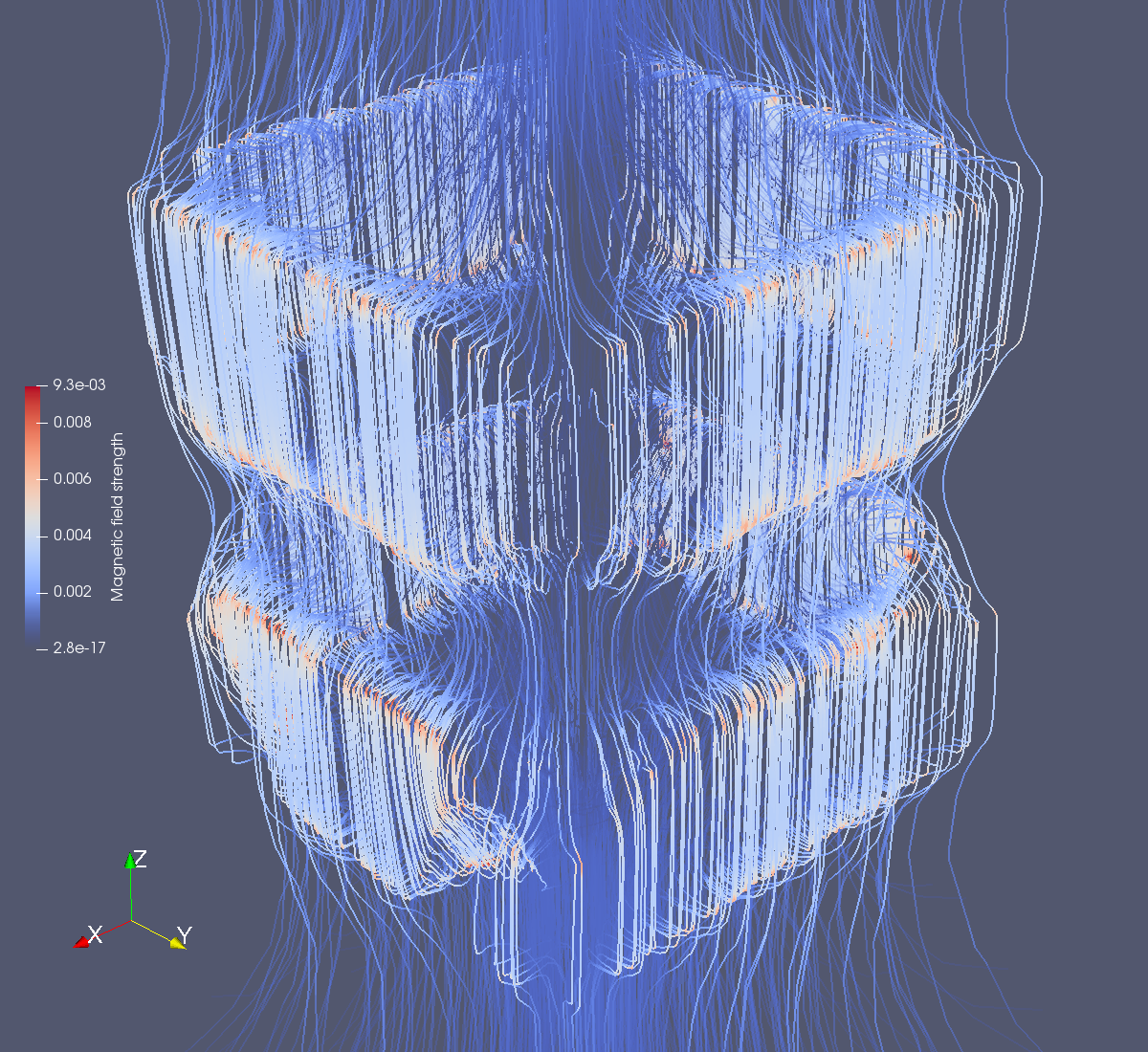}}
\caption{Different views on the magnetic field at the initial and the final iteration. a.)--b.): 2D slice in the center. c.)--d.): Total shot from the same view as \cref{fig:Ex1:Shapes}.} 
\label{fig:Ex1:Field}
\vspace{-0.1cm}
\end{figure}

\subsection{First example} We set $\E_d \equiv 0$ in compliance with \ref{assump:P} to find the optimal shape of a superconductor that minimizes both the electromagnetic field penetration and the volume of material.  
This example is motivated  by the HTS application in the superconducting shielding (cf. \cite{KvitkovicEtAl15}).
We take $\kappa \equiv 8\cdot 10^7$, which is a reasonable choice considering that the electric field strength is roughly $|\E| \approx 10^{-3}$ due to the weak applied current strength $|\f|$. 
The initial shape consists of material attached to the boundary of $B$ (see \cref{fig:Ex1:Shape_0}). 
In \cref{fig:Ex1:Shape_1,fig:Ex1:Shape_2,fig:Ex1:Shape_3} we see some snapshots of the evolving shape generated by our algorithm. 
The algorithm generates two connected components on the top and the bottom of the (lateral) boundary. 
It is interesting to observe that the magnetic field  ($\curl \E$) hits the boundary of the bounding box $B$ from above and, despite the small amount of material used, the field lines do not penetrate through the inside of the area enclosed by the superconductor (see \cref{fig:Ex1:Field_1,fig:Ex1:Field_2}).
Moreover, in \cref{fig:Ex1:Field} we can compare the magnetic field penetration for the initial and the final shape from different camera perspectives. 
The interior of the initial shape is barely protected from penetration, whereas the final shape redirects the magnetic field lines such that they are condensed on the outside of $B$. 

In the final iteration the functional value is around $0.444$ at a volume of roughly $0.278$ which is only $27.8\%$ of the volume of $B$. The E-field fraction in the cost functional amounts roughly to $0.166$. This means that there is only a weak magnetic field left in small areas of $B$. The penetration is mostly between the connected components on the lateral surface of the conducting material.
The development of the functional value as well as the volume fraction is documented in \cref{fig:ValuesEx1} and the minimal value is reached after roughly $125$ iterations. 
Thereafter, it remains almost constant.

We also observe a slight increase of the cost functional at iterations $43$ and $44$, 
due to a topological change in the design. 
Indeed, at iteration $42$ the components on the lateral sides of the cube are disconnected (see \cref{fig:Ex1:Shape_1}), and then merge at iteration $45$ (see \cref{fig:Ex1:Shape_2}). 
This increase of the cost functional due to a topological change is a well-known issue with the level set method; 
see \cite{MR3840889} for a recent study on this issue.
However, in this example the increase in the functional value is negligible and immediately compensated by a sharp decrease.
\begin{figure}[tbhp]
\vspace{-0.4cm}
\centering
 \subfloat{\label{fig:ValuesEx1}\includegraphics[width=0.49\textwidth]{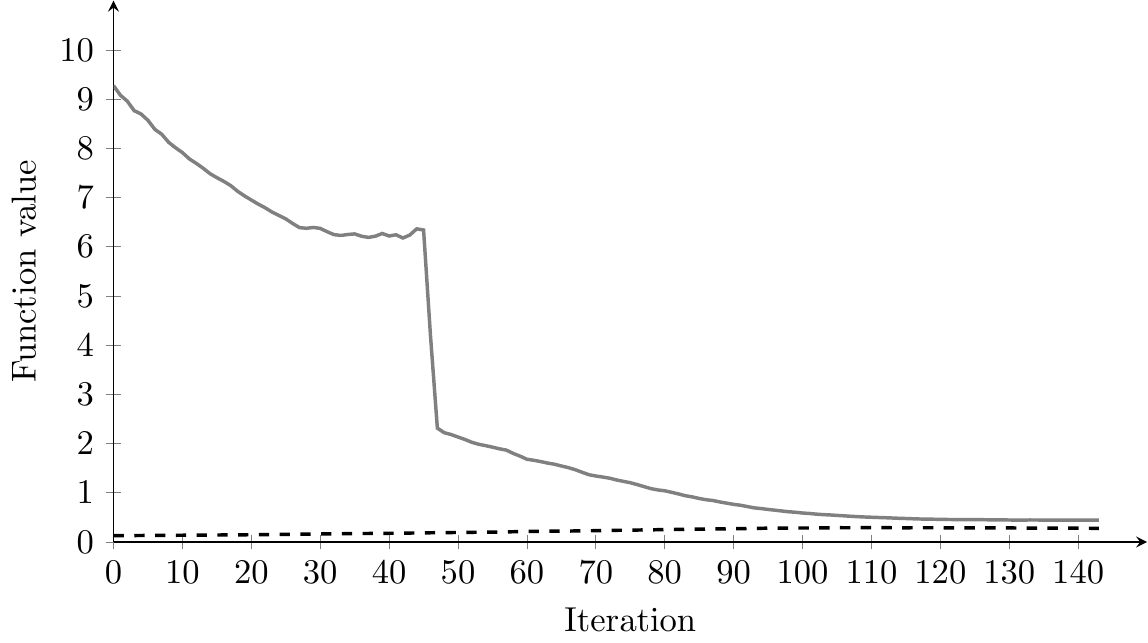}}\,
 \subfloat{\label{fig:ValuesEx2}\includegraphics[width=0.43\textwidth]{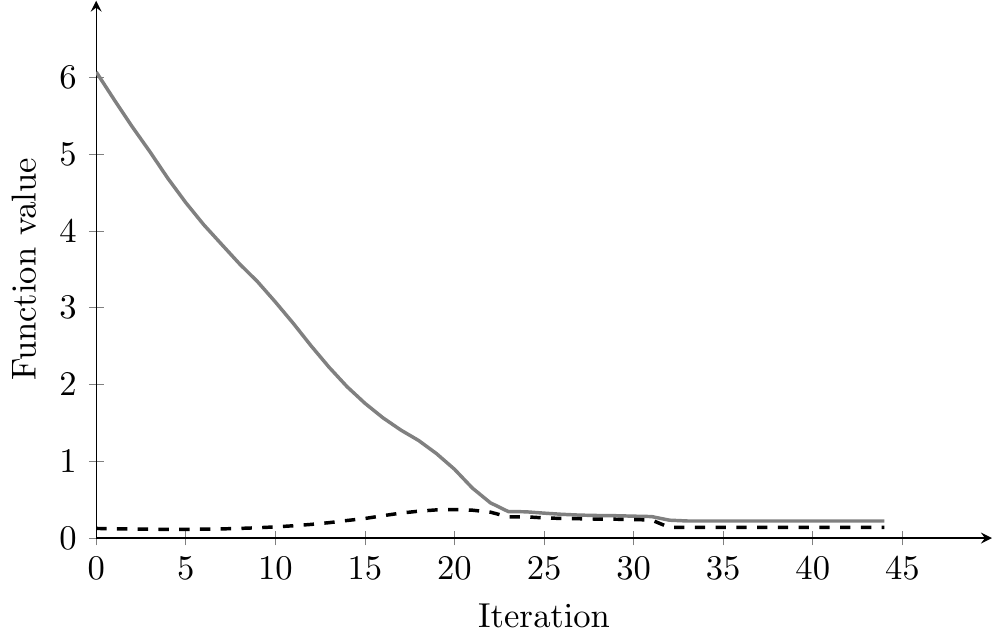}}
  \vspace{-0.3cm}
 \caption{Function value (solid) and volume (dashed): 1. Example (left), 2. Example (right).}
 \vspace{-0.3cm}
\end{figure}
\subsection{Second example}
In our second example, we place a superconducting ball $\om_b$ with radius $r_b = 0.5$ inside $B$ (see \cref{fig:Ex2:Orig_Shape}) and compute $\E_d$ as the corresponding solution of \cref{eq:RegVI}. 
The resulting magnetic field is displayed in \cref{fig:Ex2:Orig_Field_1,fig:Ex2:Orig_Field_2}. We initialized the algorithm with the same parameters and the same initial shape as in the first example (see \cref{fig:Ex1:Shape_0}). 
In the end, we obtain two bell-shaped components  connected by small transitions on the boundary. In \cref{fig:Ex2:Shape_1,fig:Ex2:Shape_2,fig:Ex2:Shape_3} we see this shape from different camera positions. It corresponds to a functional value of $0.223$ where the electric field costs get as low as $0.071$ at a volume fraction of $0.153$.  As the original superconductor was a ball with radius $0.5$,  our algorithm computed  an optimal shape with around $70\%$ less material. 
The development of the functional value and the volume is documented in \cref{fig:ValuesEx2}. Moreover, the descent in this example is smoother and notably faster than the first example. We explain this by the fact that the second choice of $\E_d$ gives more structure than simply $\E_d \equiv 0$. Thus, the algorithm has less possibilities to design the superconductor and converges faster.

\begin{figure}[tbhp]
  \vspace{-0.2cm}
 \subfloat{\label{fig:Ex2:Orig_Shape}\includegraphics[width=0.24\textwidth]{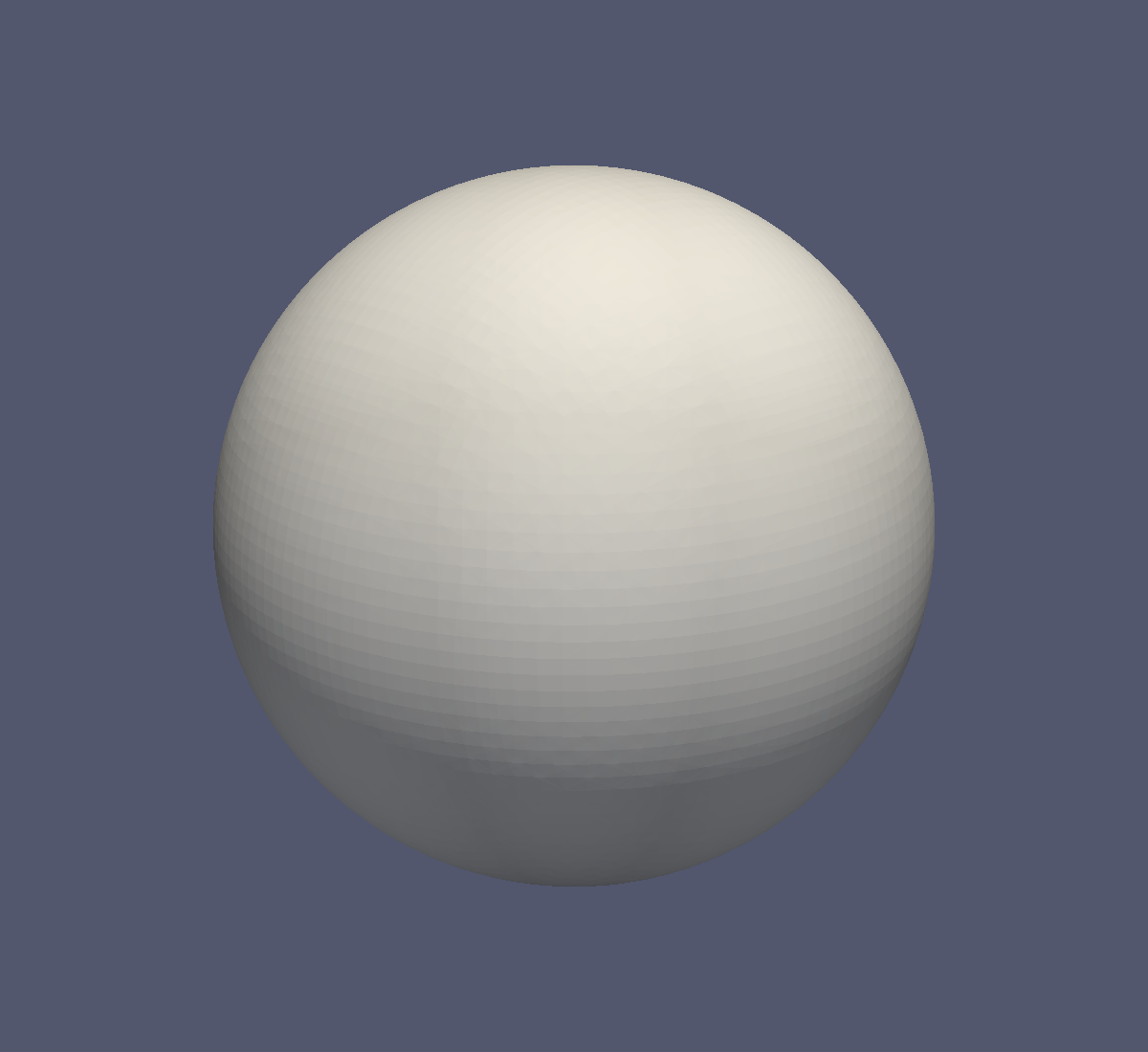}}\, 
 \subfloat{\label{fig:Ex2:Shape_1}\includegraphics[width=0.24\textwidth]{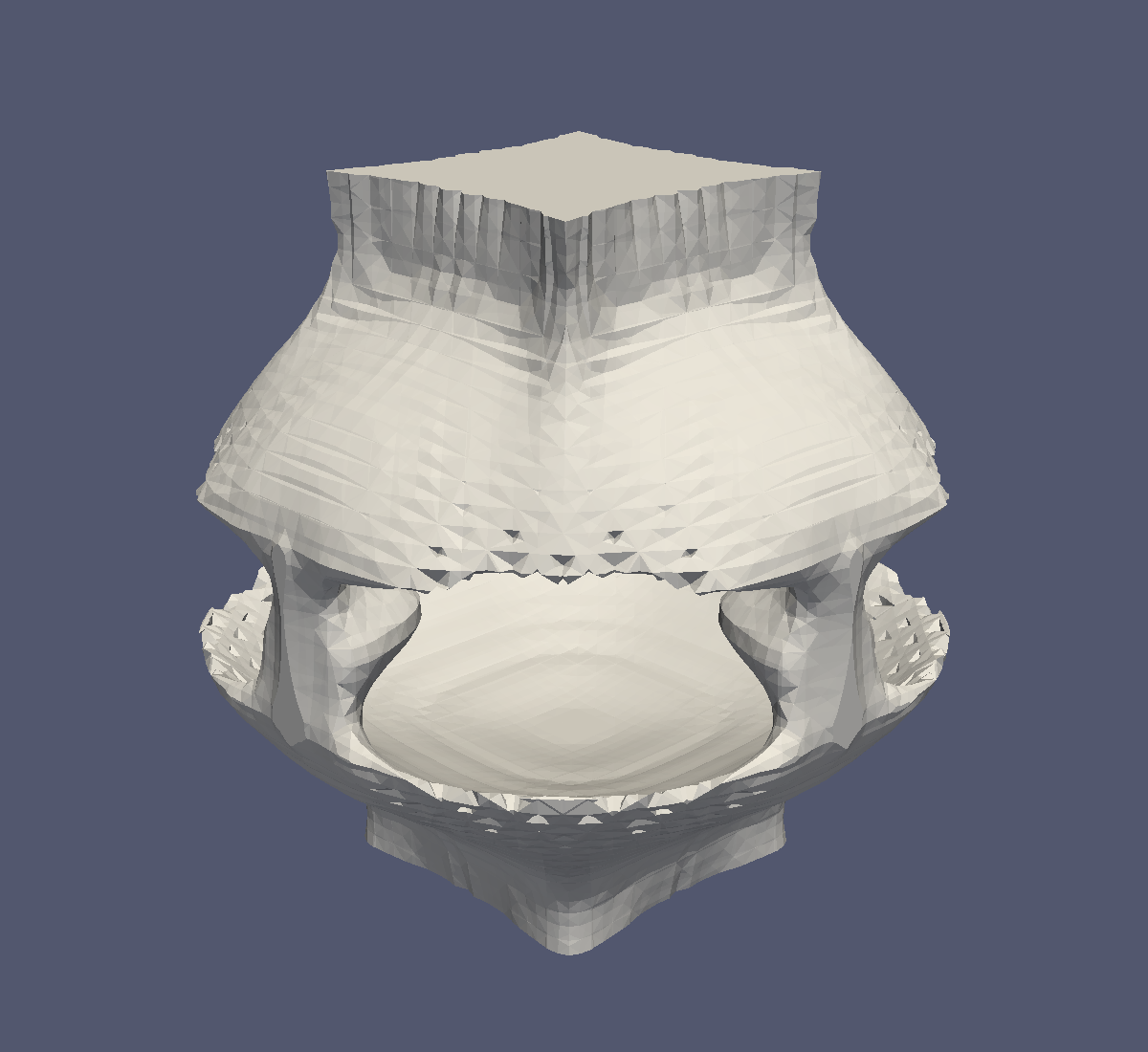}}\,
 \subfloat{\label{fig:Ex2:Shape_2}\includegraphics[width=0.24\textwidth]{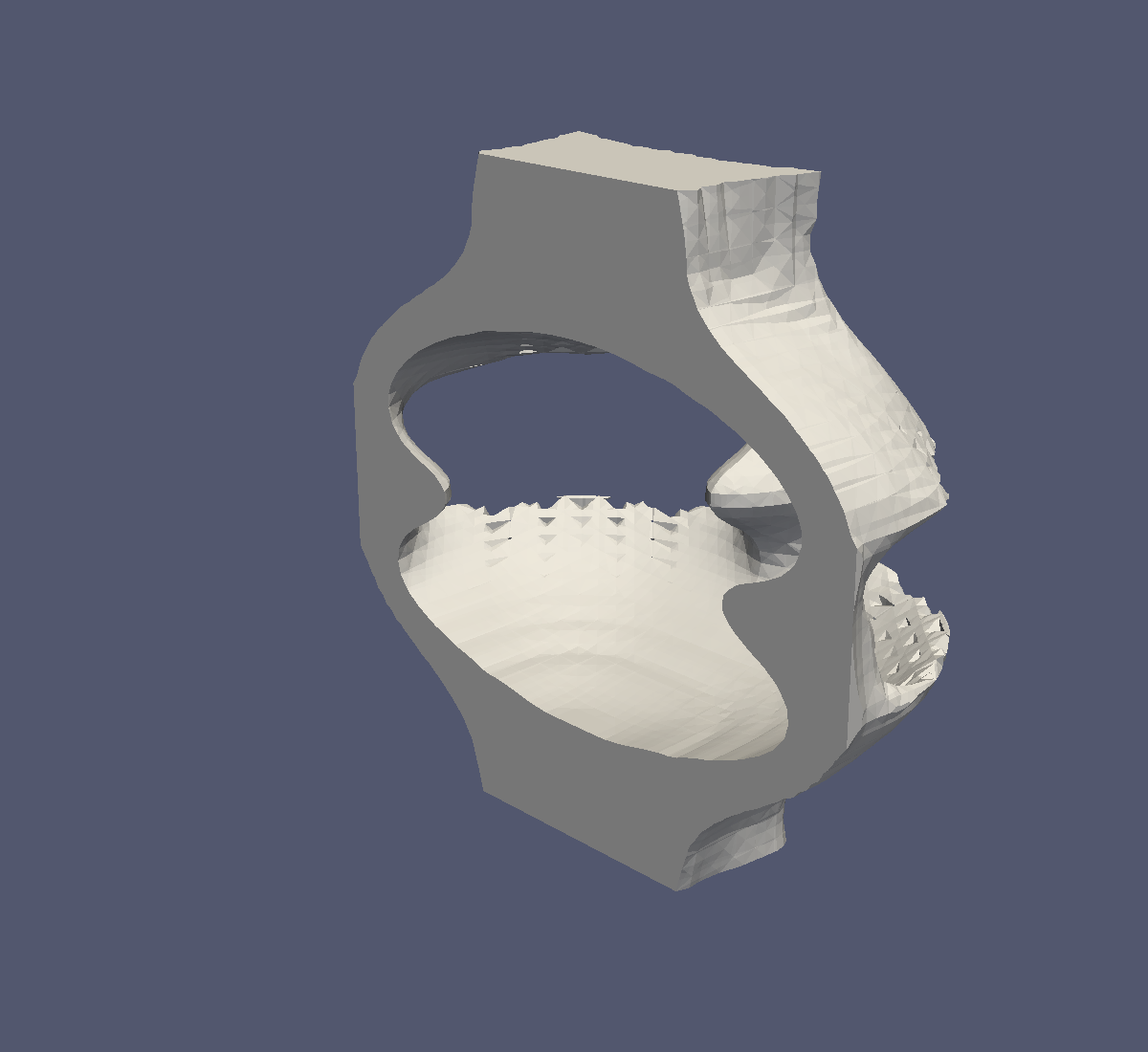}}\,
 \subfloat{\label{fig:Ex2:Shape_3}\includegraphics[width=0.24\textwidth]{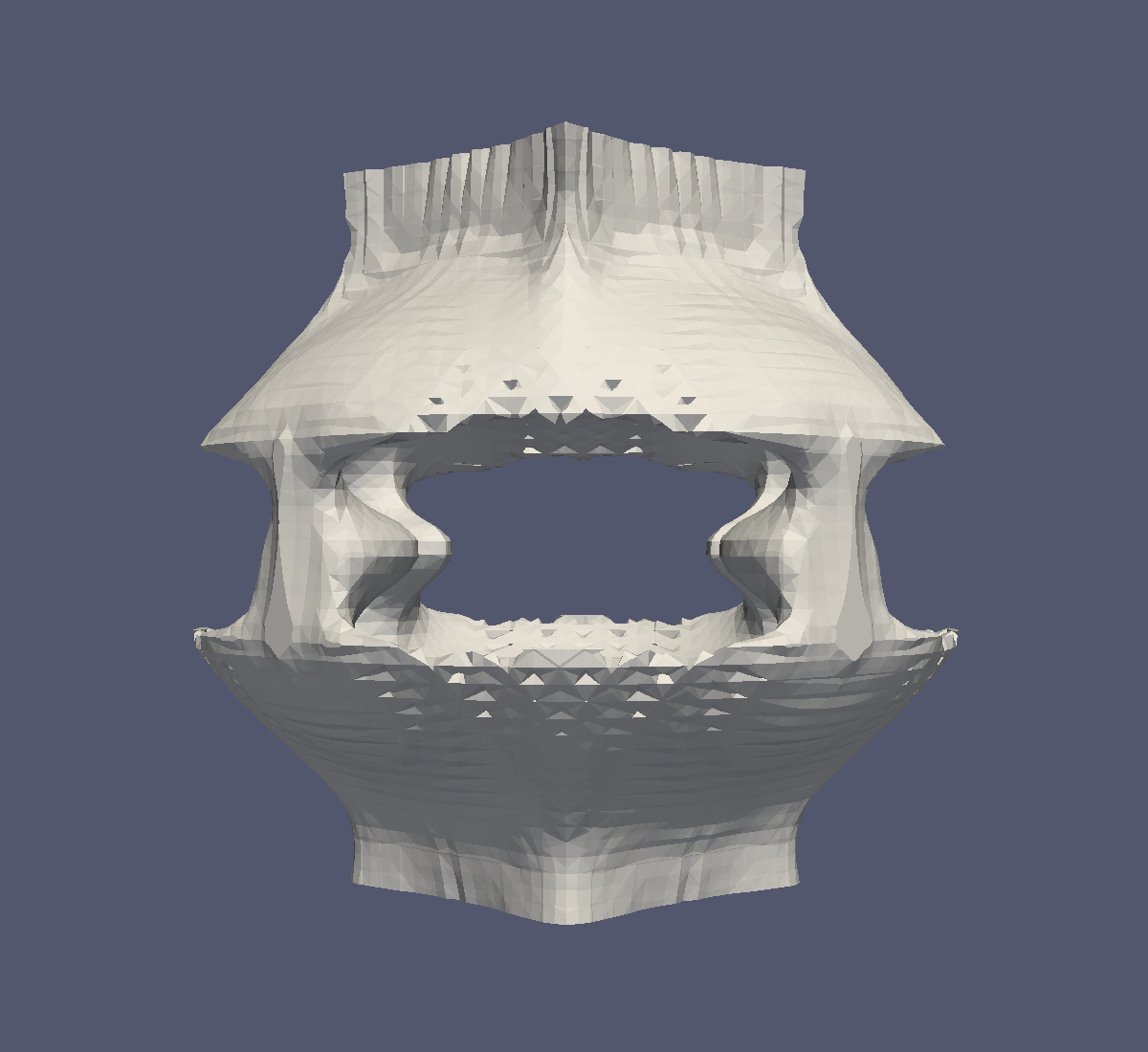}}
 \caption{The original superconductor and the final shape generated by the algorithm in the second example. The third figure is the final shape clipped along the plane $x = 0.5$.} 
 \label{fig:Ex2:Shapes}
 
 \subfloat{\label{fig:Ex2:Orig_Field_1}\includegraphics[width=0.24\textwidth]{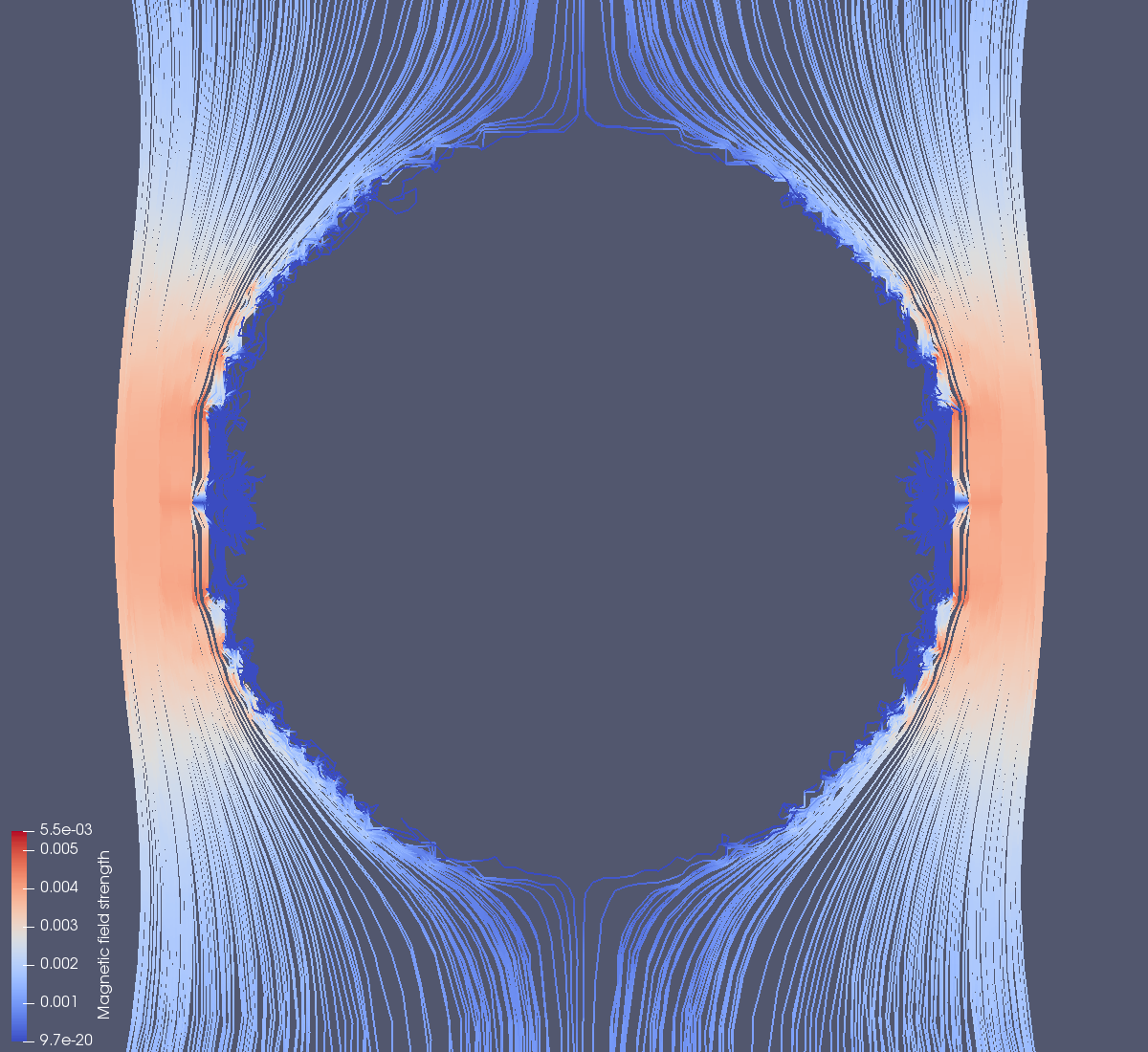}}\, 
 \subfloat{\label{fig:Ex2:Field_1}\includegraphics[width=0.24\textwidth]{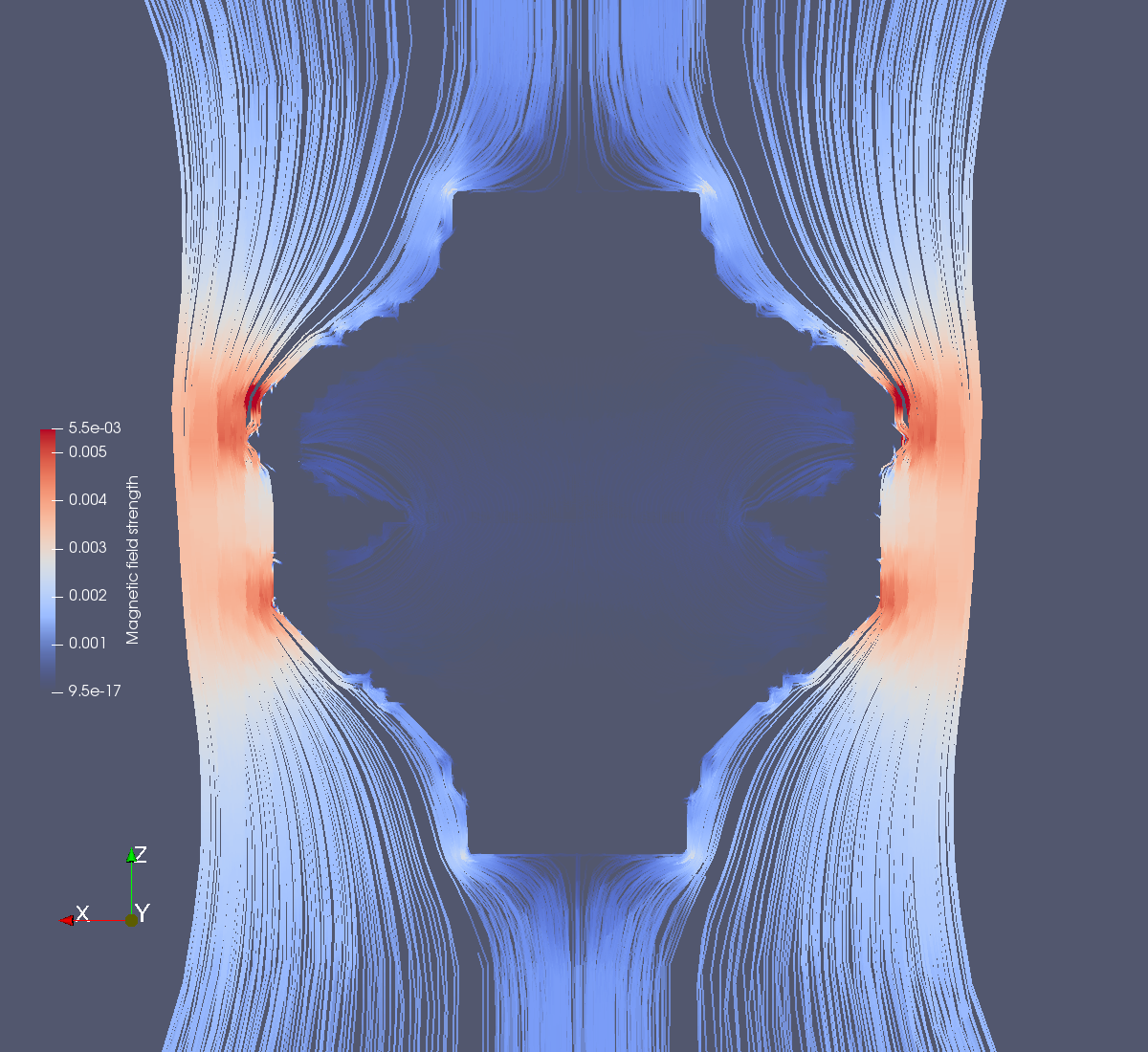}}\,
 \subfloat{\label{fig:Ex2:Orig_Field_2}\includegraphics[width=0.24\textwidth]{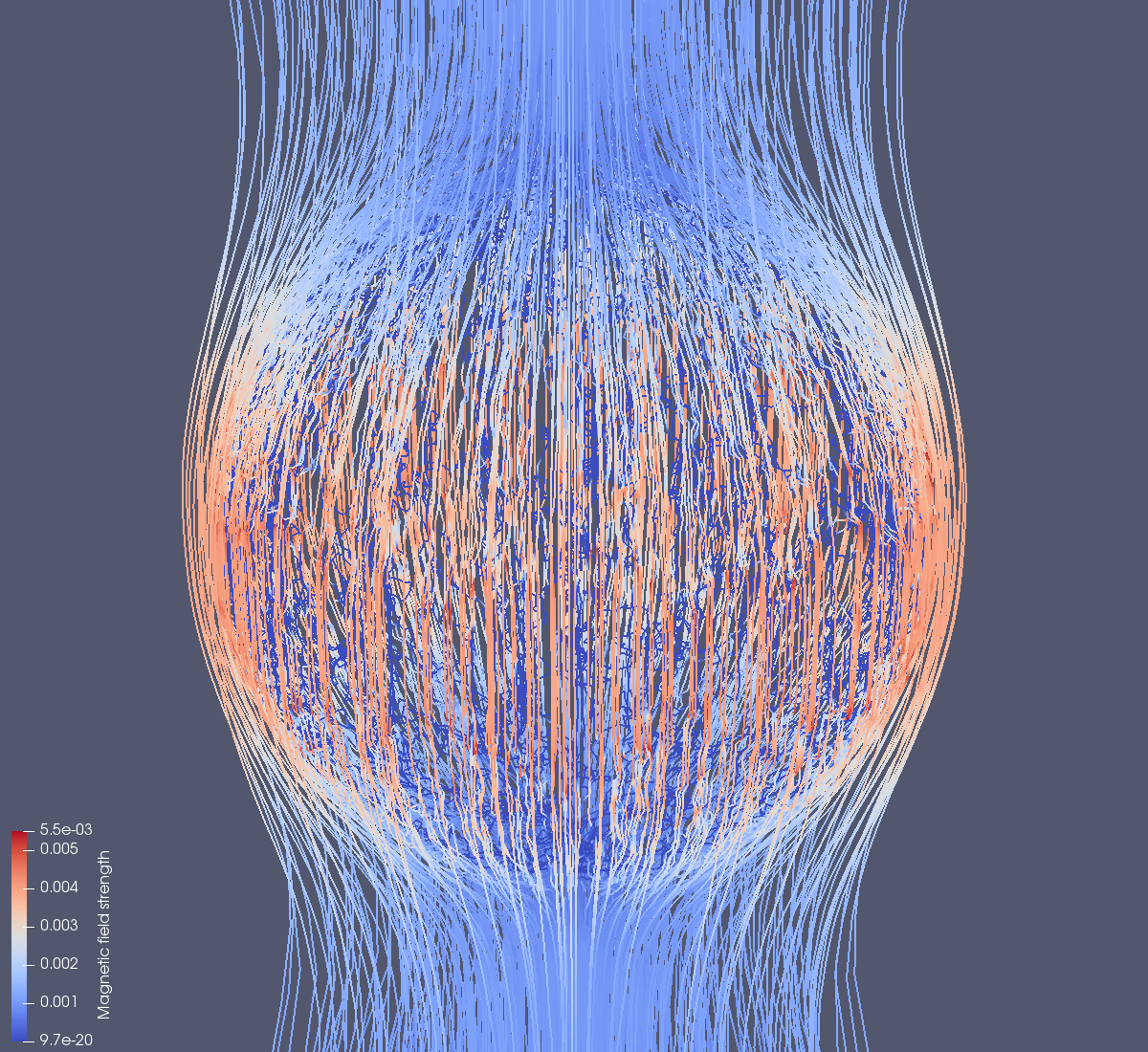}}\,
 \subfloat{\label{fig:Ex2:Field_2}\includegraphics[width=0.24\textwidth]{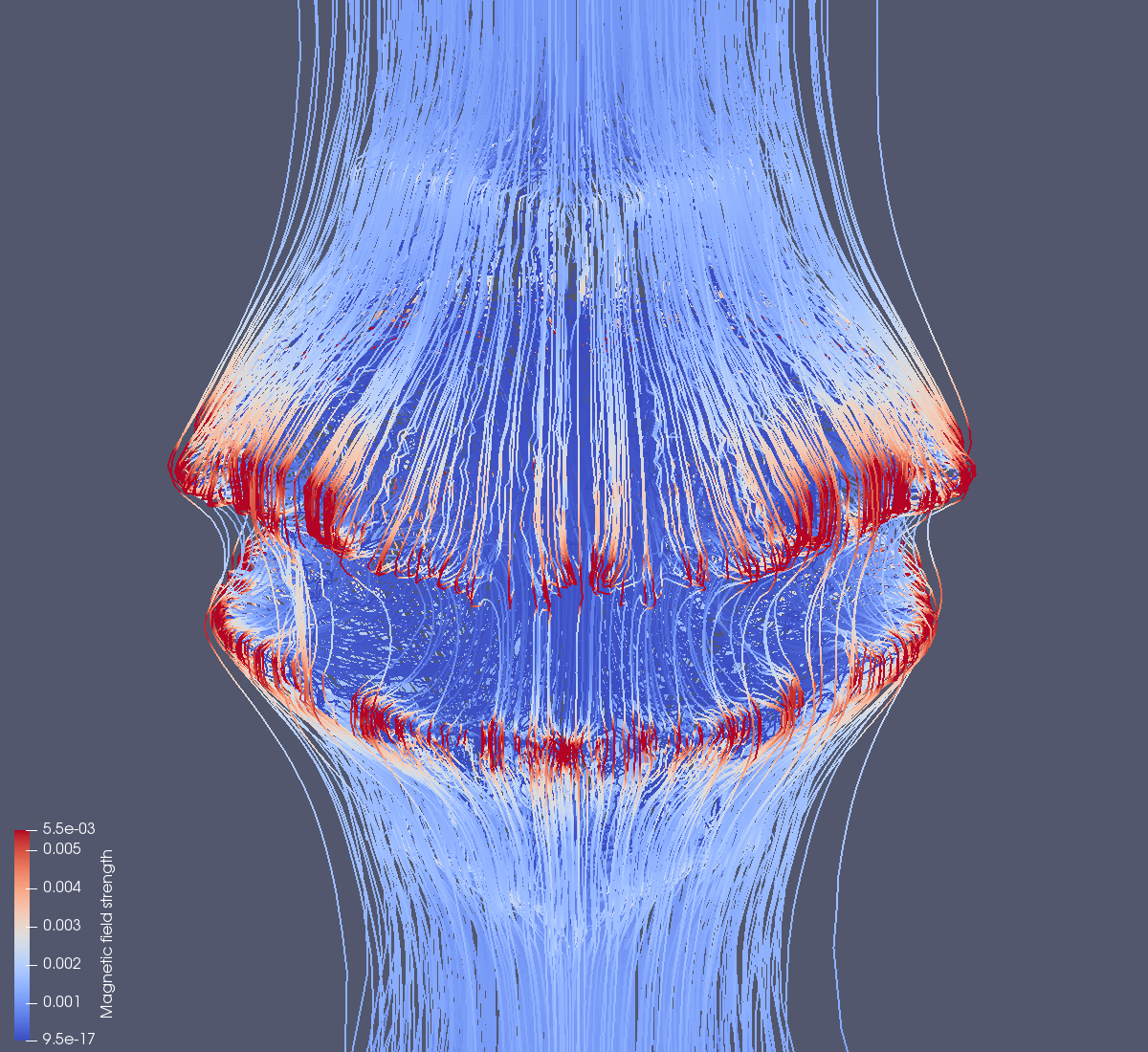}}
 \caption{Different views on the magnetic field of the original and the final superconductor. Left: 2D slice in the center. Right: Total shot from the same view as \cref{fig:Ex2:Orig_Shape,fig:Ex2:Shape_1}.} 
 \label{fig:Ex2:Field}
  \vspace{-0.2cm}
\end{figure}

\subsection{Convergence tests with respect to \texorpdfstring{$\gamma$}{gamma}}\label{sec:convtests}

Let us now report on a numerical test to verify our theoretical convergence result (\cref{thm:ConvergenceEgLamg}). Since no analytical solution is available for the limit case \cref{eq:P}, we compare the numerical results of our algorithm with two different regularization parameters $\hat{\gamma} = 7\cdot10^4$ and $\tilde{\gamma} = 7\cdot10^5$. For these choices, we terminated our algorithm  after $143$ iterations and computed the norm distance between the two numerical solutions: 
\begin{align*}
  \norm{\chi_{\omega^{\hat{\gamma}}} - \chi_{\omega^{\tilde{\gamma}}}}{L^1(\Omega)} \approx 2.88\cdot 10^{-3} \quad \text{and}\quad \Hcurlnorm{\E^{\hat{\gamma}} - \E^{\tilde{\gamma}}} \approx 1.39\cdot 10^{-4}.
\end{align*} 
This relatively small value indicates the   convergence for $\gamma \to \infty$  (\cref{thm:ConvergenceEgLamg}). In particular, we observe that, for sufficiently large penalization parameter $\gamma$, a remarkable change in   $\gamma$ would only lead to a small change in the computed optimal shape.

\subsection{Shape optimization with symmetric design}\label{sec:sym}
In many applications, it is desirable to obtain an optimal design which has certain prescribed symmetries.
These can be, for instance, the consequence of symmetries of the geometry and the data that imply symmetries in the continuous solution. 
However, in practice, the numerically optimized design may deviate substantially from these symmetries, usually due to a non-symmetric discretization. 
This can be mitigated by refining the discretization which may not always be an affordable option, especially for 3D problems.
Thus, imposing the symmetry as a constraint for the discretized problem can be a valuable alternative.

In this section we describe a method to obtain a descent direction for our minimization algorithm for \cref{eq:Preg} while imposing a symmetry constraint.
Therefore, we assume $B\subset\R^3$ (cf. \ref{assump:P}) to be additionally symmetric with respect to some plane $Q\subset \R^3$. 
Without loss of generality, we may assume that $Q = \{x \in \R^3 \, | \, x_3=0\}$. 
Thanks to \cref{thm:ShapeDerivative}, the shape derivative of $J_\gamma(\om)$ exists for every $\om\in\mathcal O$ and admits the following tensor expression (see \cref{eq:ShapeDerivative})
\begin{equation*}
dJ_\gamma(\om)(\VV) =   \int_{B} S_1 : D\VV + \bS_0\cdot\VV\, dx\quad \forall \VV\in \bm{\mathcal C}_c^{0,1}(\Omega) \textrm{ with }  \operatorname{supp} \VV \subset\subset B.
\end{equation*}
Now, a descent direction for $J_\gamma$ can be found by computing a solution $\widehat\VV \in \V_h$ of
\begin{equation*}
\mathcal{B}(\widehat\VV,\bzeta) =  - dJ_\gamma(\om)(\bzeta) = -  \int_{B} S_1 : D\bzeta + \bS_0\cdot\bzeta\, dx,\quad  \forall \bzeta\in \V_h,
\end{equation*}
where $\mathcal{B}$ is a positive definite bilinear form on $\V_h \times \V_h$ (see \cref{VP_1}). 
The descent direction $\widehat\VV \neq 0$ is not necessarily symmetric with respect to $Q$.
Our aim now is to construct a symmetric descent direction out of $\widehat\VV$.  
Therefore, we denote the reflection with respect to the plane $Q$ by $R_Q: \R^3\to \R^3$ which is given by $(x_1,x_2,x_3)^\transp \mapsto (x_1,x_2, -x_3)^\transp$. We choose an appropriate triangulation of $B$ such that  the corresponding $\mathbb P_1$-finite element space $\V_h$  satisfies 
\begin{equation}\label{P1}
\bzeta \in \V_h \quad \Rightarrow \quad \bzeta \circ R_Q \in \V_h.
\end{equation}
Clearly, a vector field  $\VV = (\theta_1,\theta_2,\theta_3)^\transp:\R^3\to\R^3$ is symmetric with respect to $Q$ if and only if
\begin{equation}\label{DRQ}
\VV\circ R_Q(x) = (\theta_1(x), \theta_2(x), -\theta_3(x))^\transp = DR_Q \VV(x) \quad \forall x \in \R^3.
\end{equation}
We define the vector field
\begin{equation*}
\VV := \widehat\VV + DR_Q\widehat\VV\circ R_Q 
\end{equation*}
which is indeed symmetric with respect to $Q$. Due to $R_Q^{-1} = R_Q$ and $DR_Q^{-1} = DR_Q$, we readily obtain that \cref{DRQ} holds for $\VV$ by calculating
\begin{align*}
 \VV\circ R_Q = \widehat\VV\circ R_Q + DR_Q\widehat\VV = DR_Q\VV. 
\end{align*}
Next, we will prove that $\VV$ also provides a descent direction. In fact, the bilinear form $\mathcal B$ that was used for our numerical experiments \cref{VP_2} consists of three summands. However, as the arguments are virtually the same for all of them, we will only focus on the first one, i.e.,
\begin{equation*}
\widetilde{\mathcal{B}}: \V_h \times \V_h  \to \R,\quad (\boeta,\bzeta)  \mapsto  \int_B D\boeta : D\bzeta\,dx. 
\end{equation*}
Since $\widehat\VV\in \V_h$,  we have due to \cref{P1}  that $\VV\in \V_h$, and therefore
\begin{align}\label{eq:symm1}
  \widetilde{\mathcal{B}}(\widehat\VV,\VV)
&=   \int_B D\widehat\VV : D(\widehat\VV  + DR_Q\widehat\VV\circ R_Q)\,dx\\\notag
& =   \int_{B} D\widehat\VV : [ D\widehat\VV  + DR_Q D(\widehat\VV\circ R_Q)  ]\,dx\\\notag
& =   \int_{B} D\widehat\VV : [ D\widehat\VV  + DR_Q (D\widehat\VV\circ R_Q) DR_Q  ]\,dx.
\end{align}
In order to exploit the symmetry properties of $B$, we introduce half-sets $B^+ =B \cap \{x_3>0\}$ and $B^- =B \cap \{x_3<0\}$. Thus, we may split the integral in \cref{eq:symm1} and apply the change of variables $x\mapsto R_Q(x)$ in the integral over $B^-$. Therefore, using the fact that $DR_Q = DR_Q^{-1} =  DR_Q^\transp$ we finally obtain
\begin{align*}
\widetilde{\mathcal{B}}(\widehat\VV,\VV)  & =   \int_{B^+} D\widehat\VV : [ D\widehat\VV  + DR_Q (D\widehat\VV\circ R_Q) DR_Q  ]\,dx\\
&\quad  +  \int_{B^+} D\widehat\VV \circ R_Q: [ D\widehat\VV\circ R_Q  + DR_Q D\widehat\VV DR_Q  ]\,dx\\
& =   \int_{B^+} D\widehat\VV : [ D\widehat\VV  + DR_Q (D\widehat\VV\circ R_Q) DR_Q  ]\,dx\\
&\quad  + \int_{B^+} DR_Q  (D\widehat\VV \circ R_Q)  DR_Q: [DR_Q (D\widehat\VV\circ R_Q) DR_Q + D\widehat\VV   ]\,dx\\
& =   \int_{B^+} |D\widehat\VV  + DR_Q (D\widehat\VV\circ R_Q) DR_Q|^2\,dx >0 .
\end{align*}
Similar calculations yield $dJ_\gamma (\omega) (\VV) =-  \mathcal{B}(\widehat\VV,\VV) <0$.
Thus, $\VV$ is a descent direction for $J_\gamma$ that satisfies the symmetry property \cref{DRQ}.
Using $\VV$ instead of $\widehat\VV$ in our numerical algorithm yields an optimized design that is symmetric with respect to $Q$.

Finally, observe that if two symmetries with respect to two orthogonal planes $Q_1$ and $Q_2$ are desired, applying the symmetrization process described above  first with respect to $Q_1$ and then with respect to $Q_2$ will yield the desired symmetries for $\VV$.

\vspace{-0.25cm}
\bibliographystyle{abbrv} 
\bibliography{litbank}


\end{document}